\newcommand{\Cbeta}{{\sf M}(\beta)}
\title{Kawasaki dynamics beyond the uniqueness threshold}
\author{Roland Bauerschmidt\footnote{Courant Institute of Mathematical Sciences, NYU. E-mail: {\tt bauerschmidt@cims.nyu.edu}.}
\and Thierry Bodineau\footnote{IHES, CNRS. E-mail: {\tt bodineau@ihes.fr}.}
\and Benoit Dagallier\footnote{Courant Institute of Mathematical Sciences, NYU. E-mail: {\tt bd2543@cims.nyu.edu}.}}
\date{\vspace*{-2em}} 
\begin{document}
\maketitle
\begin{abstract}
  Glauber dynamics of the Ising model on a random regular graph is known to mix fast below the
  tree uniqueness threshold and exponentially slowly above it.
  We show that Kawasaki dynamics of the canonical ferromagnetic
  Ising model on a random $d$-regular graph mixes fast beyond the tree uniqueness threshold when $d$ is large enough
  (and conjecture that it mixes fast up to the tree reconstruction threshold for all $d\geq 3$).
  This result follows from a more general spectral condition for (modified) log-Sobolev inequalities
  for conservative dynamics of Ising models.
  The proof of this condition in fact extends to perturbations of distributions with log-concave generating polynomial.
\end{abstract}

\section{Introduction and results}

We first discuss our results for Kawasaki dynamics on random regular graphs, which are our primary motivation,
in Section~\ref{sec:intro-rrg},
and then the more general results for conservative dynamics of Ising models that these follow from,
in Section~\ref{sec:intro-general}.
{\red In Section~\ref{sec:logconcave}, we state a generalisation (with the same proof)
  to interacting measures on the bases of a matroid which could be of independent interest.}

\subsection{Result for random regular graphs}
\label{sec:intro-rrg}

Let $G=([N],E)$ be a random $d$-regular graph on $N$ vertices.
Glauber dynamics of the ferromagnetic Ising model on $G$ is known to mix fast when $\beta<\beta_c$ and exponentially slowly when $\beta>\beta_c$,
where $\beta_c =\operatorname{artanh}(1/(d-1)) \sim 1/(d-1)$ is the uniqueness threshold for the Ising
model on the infinite $d$-regular tree,
see \cite{MR3059200} for fast mixing (or \cite[Example~6.15]{2307.07619} for a perspective closer to that of this work)
and \cite{4389492,MR2650042} for slow mixing.
Different from the situation on finite-dimensional lattices, 
the Ising model on the infinite $d$-regular tree
has a second phase transition  which occurs at the reconstruction threshold
$\beta_r = \operatorname{artanh}(1/\sqrt{d-1}) \sim 1/\sqrt{d-1}$, see for example \cite{MR1391195} and the further discussion below.
While the uniqueness transition of the ferromagnetic Ising model is related to the magnetisation (the average of all spins),
which is an order parameter for this transition on random regular graphs,
the magnetisation becomes irrelevant for the canonical Ising model, which is the Ising model conditioned on its magnetisation {\red (see~\eqref{e:Ising} below for a definition)}.
The natural analogue of Glauber dynamics for the canonical Ising model is Kawasaki dynamics. 
{\red This is a Markov chain reversible with respect to the canonical Ising measure, in discrete or continuous time},  
which 
randomly swaps neighbouring spins and thus conserves the total magnetisation.

Our following main result shows that, for $d \geq d_0$,  Kawasaki dynamics of the {\red ferromagnetic} 
Ising model on a random $d$-regular graph mixes fast  beyond the tree uniqueness threshold,
at which Glauber dynamics of the slows down exponentially.
{\red A more detailed version of the following theorem is given in Corollary~\ref{cor:rrg}.}

\begin{theorem} \label{thm:rrg}
  For $d\geq 3$ and $\beta< 1/(8\sqrt{d-1})$,
  Kawasaki dynamics of the canonical {\red nearest-neighbour} Ising model
  on a random $d$-regular graph on $N$ vertices
    (quenched, and ferromagnetic or antiferromagnetic)
  mixes in $O_{d,\beta}(N\log^6\!N)$ steps,
  with high probability on the randomness of the graph.
\end{theorem}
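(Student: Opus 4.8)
The plan is to obtain Theorem~\ref{thm:rrg} as an instance of the general spectral criterion for conservative dynamics stated in Section~\ref{sec:intro-general}, the detailed application to random regular graphs being Corollary~\ref{cor:rrg}. First I would record what the criterion asks for. In occupation variables $n\in\{0,1\}^{[N]}$ (with $\sigma_i=2n_i-1$), the canonical Ising measure on $G$ is the measure on the level set $\{n:\sum_i n_i=k\}$ obtained from the uniform measure $\mu_0$ there by reweighting with $e^{\pm\beta\sum_{\{i,j\}\in E}\sigma_i\sigma_j}$. Expanding $\sigma_i\sigma_j=4n_in_j-2n_i-2n_j+1$ and using that $\sum_{\{i,j\}\in E}(n_i+n_j)=d\sum_i n_i$ and $|E|$ are constant on the level set, this reweighting equals $e^{\pm 2\beta\langle n,An\rangle}$ up to a constant, where $A$ is the adjacency matrix of $G$. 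Thus the canonical measure is exactly a perturbation of the measure $\mu_0$, whose generating polynomial $e_k(x_1,\dots,x_N)$ is real-stable, hence log-concave (indeed $\mu_0$ is the uniform measure on the bases of the rank-$k$ uniform matroid), so it lies in the scope of the log-concave perturbation part of the general result, and what remains is to verify its quantitative input.

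That input is an operator-norm bound on the Hessian of the log-density $\log(d\mu/d\mu_0)=\pm 2\beta\langle n,An\rangle+\text{const}$, i.e.\ on $\pm 4\beta A$, restricted to the tangent space of the constraint. Since $\sum_i n_i$ is frozen, that tangent space is the mean-zero subspace $\mathbf{1}^{\perp}$; equivalently, the covariance of $\mu_0$ annihilates $\mathbf{1}$, so the eigenvalue $d$ of $A$ carried by the constant vector drops out. This is the structural reason Kawasaki dynamics can mix past the Glauber uniqueness threshold: the unstable magnetisation mode is removed by the conservation law, and what governs the remaining perturbation is the non-trivial spectrum of $A$. On $\mathbf{1}^{\perp}$ I would invoke the near-Ramanujan property of random $d$-regular graphs (Friedman's theorem and its refinements): with high probability on $G$, $\max\big(\lambda_2(A),\,-\lambda_N(A)\big)\le 2\sqrt{d-1}+\varepsilon$ for any fixed $\varepsilon>0$ once $N$ is large. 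Hence the restricted Hessian has norm at most $4\beta\,(2\sqrt{d-1}+\varepsilon)=8\beta\sqrt{d-1}+o_d(1)<1$ under the hypothesis $\beta<1/(8\sqrt{d-1})$, the constant $8$ being (up to the normalisation conventions of the criterion) the product of the factor lost in the spin-to-occupation change of variables and the factor $2$ in the spectral radius $2\sqrt{d-1}$. The same bound in absolute value covers both signs, so the ferromagnetic and antiferromagnetic cases run in parallel; in particular the near-Ramanujan bound controls $|\lambda_N(A)|$ as well, which is what matters for the antiferromagnetic sign (and for the fact that a random $d$-regular graph is not close to bipartite).

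With the spectral condition in force, the general criterion returns a modified log-Sobolev inequality for Kawasaki dynamics with constant comparable — up to polylogarithmic factors in $N$ and a factor depending only on $d,\beta$ through the gap $1-8\beta\sqrt{d-1}$ — to that of the unperturbed base chain on the level set, the Bernoulli--Laplace / random-transposition model, for which Lee--Yau-type estimates give a modified log-Sobolev constant of order $1/(N\operatorname{polylog}N)$. Feeding this into the standard entropy-decay estimate for the total-variation mixing time then gives mixing in $O_{d,\beta}(N\log^6 N)$ steps, the exponent $6$ absorbing the polylogarithmic losses in the base inequality and in the transfer step. The quenched, high-probability statement enters only through the spectral bound on $A$, which does not involve the fixed magnetisation $k$, so one may either fix a balanced $k$ or take a union bound over the $O(N)$ admissible values.

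The deduction above is short, and I expect the real work to sit in two places. First, one must make sure the general criterion is genuinely formulated \emph{relative to the hard constraint}, i.e.\ that the controlling norm is that of the Hessian on $\mathbf{1}^{\perp}$ (equivalently, conjugated by the base covariance) and not the full Hessian — without this the threshold would degrade to order $1/d$ and Kawasaki would not improve on Glauber. Second, and quantitatively the delicate point, is calibrating the constant: matching the universal norm threshold of the criterion against the \emph{sharp} spectral radius $2\sqrt{d-1}$, which is only available with high probability, forces the near-Ramanujan bound to be used with arbitrarily small error, so the hypothesis $\beta<1/(8\sqrt{d-1})$ is essentially tight for this method; and tracking every polylogarithmic factor through the localisation/transfer machinery together with the Lee--Yau input, so as to land at $\log^6 N$ rather than a larger power, is the main bookkeeping effort. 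I expect this second point — the sharp constant and the polylog accounting — to be the principal obstacle, and it is where Corollary~\ref{cor:rrg} does its work.
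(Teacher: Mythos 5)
Your reduction to the general spectral criterion has the right skeleton and matches the paper's route up to a point: the canonical measure is indeed treated as a perturbation of the uniform measure on the slice (log-concave generating polynomial), the relevant spectral quantity is the nontrivial spectrum of the adjacency matrix on $\mathbf{1}^{\perp}$, and Friedman's theorem gives the threshold $\beta<1/(8\sqrt{d-1})$ with high probability (the paper phrases this through the width $\delta(A)=\lambda_N-\lambda_2\leq 4\sqrt{d-1}+o(1)$ and the condition $\beta<1/(2\delta(A))$ of (SC), which is the same calibration as yours, and it covers both signs of the coupling).

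The genuine gap is the step where you assert that ``the general criterion returns a modified log-Sobolev inequality for Kawasaki dynamics'' with constant comparable to that of the Bernoulli--Laplace base chain via Lee--Yau-type estimates. The general result (Theorem~\ref{thm:mlsi}) controls only the dynamics in which \emph{all pairs} of spins may be exchanged (the standard Dirichlet form or the down-up walk), with an order-one constant coming from the log-concavity-based mLSI for the down-up walk, not from Lee--Yau; it says nothing about nearest-neighbour exchanges. Passing to Kawasaki is a separate, graph-geometric comparison that your proposal neither identifies nor replaces: one conditions along geodesic paths and applies the moving particle lemma, and the path counting on the random regular graph (using the whp diameter bound $\log_{d-1}N+\log_{d-1}\log N+C$) costs a factor $\log^4\!N$ (Corollary~\ref{cor:mp-rrg}). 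Moreover this comparison is carried out for the square-root (LSI) Dirichlet form, so one must first convert the mLSI into an unmodified LSI, costing a further $\log N$ (Corollary~\ref{cor:lsi}); together with the $\log\log(1/\nu_{\min})\approx\log N$ in the mixing-time bound, this is exactly where the exponent $6$ comes from. Attributing the polylogarithmic losses to Lee--Yau estimates for Bernoulli--Laplace misplaces them: Bernoulli--Laplace is the mean-field exclusion, not nearest-neighbour Kawasaki on the graph, and without the Dirichlet-form comparison (or some substitute for it) the deduction of the theorem is incomplete precisely at its most Kawasaki-specific point.
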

{\red Throughout the paper Kawasaki dynamics will be defined only through its Dirichlet form. Since in our setting all jump rates are bounded, this implies that all our results apply to both continuous or discrete time versions of the dynamics up to rescaling time.} 

{\red Theorem~\ref{thm:rrg} implies} in particular, since $\beta_c = \operatorname{artanh}(1/(d-1))<1/(8\sqrt{d-1})$ for $d\geq d_0$ large, 
that indeed Kawasaki dynamics is fast beyond $\beta_c$.
Our result also applies under a more general condition which we expect (but do not prove) holds for $\beta<\beta_r$, see the later discussion.
Further we expect that the power of the logarithm is $1$ instead of $6$ in the optimal mixing time estimate.
As part of the proof, we obtain this mixing
time for a dynamics in which all exchanges of spins are permitted rather than only those of neighbours. 
{\red In addition, our results in fact extend to Ising models defined on more constrained configuration spaces (bases of a matroid). 
We focus on the canonical Ising model in the paper and indicate the required changes subsequently.} 

{\red
\paragraph{Interpretation of Theorem~\ref{thm:rrg}}

Before commenting the  relaxation of the Kawasaki dynamics on  the $d$-regular graph, we first recall the more studied case of the Kawasaki dynamics for the Ising model with ferromagnetic interactions  on a lattice in $\bbZ^d$. 
On a $d$-dimensional cube of size $L$ and for temperatures such that the Gibbs measure has strong mixing properties, the continuous time Kawasaki dynamics reaches equilibrium on a natural diffusive time scale $L^2$ \cite{MR1233852,MR1414837,MR1757965,MR1914934},  while the Glauber dynamics is much  faster (the time scale to reach equilibrium is $1$) \cite{MR1746301}.
These two time scales are due to different  mechanisms: for the Glauber dynamics the relaxation to equilibrium is achieved locally in a more or less independent way across the lattice, while for the Kawasaki dynamics, the spins (viewed as particles and holes) move around as weakly interacting random walks and transporting mass on a distance $L$ requires a diffusive time of order $L^2$.
For a temperature below the (unique) critical temperature of the Ising model, a phase transition occurs for the Gibbs measure and the  relaxation time becomes much longer for the Glauber dynamics \cite{MR1746301} as well as  for the Kawasaki dynamics;
this is known at least in $d=2$ \cite{MR1705586,Beltran_landim}.

As stated in Theorem~\ref{thm:rrg}, the Kawasaki dynamics behaves very differently on random regular graphs.
The fast relaxation at high temperature is intuitive as the  diffusive behaviour of the spin motion on the lattice is replaced by the much faster ballistic motion of a simple random walk on a random regular graph.
More surprisingly, the Kawasaki dynamics on random regular graphs can be much \emph{faster} than the Glauber dynamics at least for large degree, i.e., when $\beta_c = \operatorname{artanh}(1/(d-1))<1/(8\sqrt{d-1})$ by Theorem~\ref{thm:rrg},
and conjecturally for all $\beta_c<\beta_r = \operatorname{artanh}(1/\sqrt{d-1})$. This
shows in some sense that, on random regular graphs, the magnetisation is the \emph{only} bottleneck responsible
for the critical slowdown of the Glauber dynamics at the uniqueness threshold $\beta_c$.
Since the magnetisation is fixed in the Kawasaki dynamics, it is not sensitive to this bottleneck.
A similar phenomenon takes place for the Ising model on the complete graph, where at any temperature Kawasaki dynamics is simply the symmetric simple exclusion process.

Nonetheless, the hard constraint on the magnetisation is well known to make Kawasaki dynamics significantly more difficult to study
than Glauber dynamics. Moreover, the established approaches to Kawasaki dynamics on lattices are based on comparison
with the Ising model without constraint through mixing conditions \cite{MR1233852,MR1414837,MR1757965,MR1914934},
and one therefore cannot hope that these methods extend beyond the uniqueness threshold where the canonical and unconstrained models
behave differently.
The derivation of Theorem~\ref{thm:rrg} relies instead on more global spectral estimates. 
Using this point of view,
the two thresholds $\beta_c\sim 1/d$ and $\beta_r\sim 1/\sqrt{d}$ are parallel to the two largest eigenvalues
$\lambda_1=d$ and $\lambda_2 \sim 2\sqrt{d}$ of the adjacency matrix of the random regular graph and the existence
of a second threshold is the analogue of random regular graphs being expanders.
Our perspective also provides a different approach to Kawasaki dynamics on lattices.}

\paragraph{About the condition on $\beta$}

The conclusion of Theorem~\ref{thm:rrg} applies either under the stated assumption $\beta<1/(8\sqrt{d-1})$ 
{\red which is a spectral condition on the adjacency matrix of random regular graphs, see below~\eqref{condition_SC}}, 
or under a more general condition on the covariance matrices of the canonical Ising model stated in (CC) below.
We expect but do not prove that the latter condition holds for all $\beta$ up to the reconstruction threshold $\beta_r$
which we also expect to be a unique critical point for the canonical Ising model on the random regular graph,
but are not aware of results that show this, see also the discussion below.
A proof that Kawasaki dynamics remains fast up to $\beta_r$
(either by establishing the  covariance condition or in another way) would be very interesting.

Under essentially the same assumption as ours, namely $\beta<1/(C\sqrt{d-1})$, and with a somewhat related motivation,
it was recently shown that the Ising model on the infinite $d$-regular tree is a factor of IID \cite{MR4369724}.

\paragraph{About the canonical Ising model}

Ising models on random regular graphs have been studied extensively, mostly due to their  motivation in the spin glass context \cite{MR2518205}.
This motivation is even more relevant for the canonical Ising model as the constraint on the magnetisation can lead to frustration
while the randomness of the graph produces disorder,
and indeed, it is expected that the canonical Ising model displays spin-glass like behaviour at low temperatures.
In support of this, it has been proved that in the limit $d\to\infty$ and $\beta\to\infty$ the quenched free energy
of the canonical Ising model (with magnetisation $0$) coincides
with that of the Sherrington--Kirkpatrik model given by the Parisi formula \cite{MR3630296}.
More generally, for finite $\beta$ we expect that similar considerations show
that the quenched free energy $F_\beta$ of the canonical Ising model
with {\red magnetisation $m=0$, i.e., spin configurations satisfy $\sum_i \sigma_i =0$, see \eqref{e:Ising} below,}
satisfies, as $d\to\infty$ with $\beta/\sqrt{d-1}$ of order $1$ in our normalisation (discussed below),
\begin{equation}
  F_{\beta/\sqrt{d-1}} = {\sf P}_\beta + O_{\beta/\sqrt{d-1}}(\frac{1}{\sqrt{d}}),
\end{equation}
where ${\sf P}_\beta$ is the free energy of the Sherrington--Kirkpatrik model
given by the Parisi solution. In particular, ${\sf P}_\beta$ is given by the
replica symmetric solution for $\beta<1$ and nontrivial for $\beta>1$. Thus it
seems natural to expect that the canonical Ising model has a related unique transition
at $\beta/\sqrt{d-1} = \beta_r/\sqrt{d-1} \sim 1$, but we are not aware of results
proving this. It would be especially interesting to construct
a test function that shows that Kawasaki dynamics indeed becomes slow beyond this transition.
The vanishing of the spectral gap of Glauber dynamics of
{\red several mean-field spin glass models} at low temperature was proved in \cite{MR3825934}.

The above picture for the canonical Ising model on random regular graphs
is different from that of the standard Ising model (without constraint of the magnetisation),
whose behaviour we briefly summarise now.
As a consequence of the local convergence of the random $d$-regular graph to the infinite $d$-regular tree,
it has been shown at any temperature that the Ising model on a random regular graph converges weakly to the symmetric mixture of the plus and minus states
of the Ising model on the infinite $d$-regular tree
and further that the Ising model conditioned on positive magnetisation converges to the plus state on the infinite $d$-regular tree \cite{MR2875752}.
However, while the local correlations of the Ising model on the random regular graph are closely related to those of the infinite tree,
its global behaviour is more subtle.
It was shown \cite{4389492} that the reconstruction problem for the Ising model on the random $d$-regular graph is solvable for $\beta > \beta_c = \operatorname{artanh}(1/(d-1))$, the tree uniqueness threshold,
whereas the reconstruction threshold on the infinite $d$-regular tree is $\beta_r=\operatorname{artanh}(1/\sqrt{d-1})$.
The solution of the reconstruction problem for the unconditioned Ising model on random $d$-regular graph is mediated through the
magnetisation and its solvability amounts to showing that the latter is a bottleneck for Glauber dynamics.
Therefore, for the unconditioned Ising model on a random regular graph, there is a standard ferromagnetic phase transition
at the tree uniqueness threshold $\beta_c$ (and this transition is well understood).

\paragraph{Further related literature}

Glauber dynamics of Ising models on finite and infinite trees has been studied in detail \cite{MR2123248,MR2094519},
but we emphasise again that the global behaviour on finite trees is quite different from that of random regular graphs.
{\red The behaviour of Ising models on finite trees is dominated by the boundary (which does not look like a regular tree).}
Results on Glauber dynamics of random regular graphs include \cite{MR3059200,4389492} which establish fast and slow mixing
below respectively above the tree uniqueness threshold $\beta_c$ and also \cite{MR4282194}
which investigates metastability properties. Swenden--Wang and block dynamics are also known to mix fast up to the tree uniqueness threshold
\cite{MR4505379}.

For the spin-glass Ising model on a random regular graph, where each edge has an independent $\pm 1$ coupling constant
(but without the constraint on the magnetisation),
it is known that Glauber dynamics mixes fast under the condition $\beta<1/(4\sqrt{d-1})$, and again fast mixing is expected up to
$\beta_r$ which is, in fact, also the critical point for the spin glass Ising model on the infinite $d$-regular tree \cite{MR853978}.
The fast mixing of Glauber dynamics for $\beta < 1/(4\sqrt{d-1})$ was observed in \cite{MR4408509} as a consequence of the
recent spectral criteria for the Glauber dynamics log-Sobolev
inequality \cite{MR3926125}, spectral gap \cite{MR4408509}, or the modified log-Sobolev inequality
\cite{2106.04105}, and is analogous to the simple criterion $\beta<1/4$ for the Sherrington--Kirkpatrik model from \cite{MR3926125}.
Recent advances in sampling the Sherrington--Kirkpatrik model up to $\beta<1$ (but not using Glauber dynamics) include \cite{2203.05093}. 
{\red Finally, there has been recent work on efficient sampling algorithms for canonical Ising models on general graphs of bounded maximal degree~\cite{Carlson2021ComputationalTF} (not using Kawasaki dynamics).
In particular, at zero magnetisation, it is shown that no algorithm applies to all bounded degree graphs beyond the tree uniqueness threshold.
This is not in contradiction with our results for \emph{random} graphs (or more generally suitably strong expander graphs), which shows  that
Kawasaki dynamics remains fast beyond the tree uniquess threshold.
It is also shown in \cite{Carlson2021ComputationalTF} that for magnetisations bounded away from $0$ (depending on the temperature) efficient algorithms exist for all temperatures.}

\subsection{General results for conservative dynamics of Ising models}
\label{sec:intro-general}

\subsubsection{Canonical Ising model and conservative dynamics}

Let $A$ be a symmetric matrix with constant eigenvector ${\bf 1} = (1,\dots, 1)$, i.e.,
$A {\bf 1} = \lambda_1 {\bf 1}$, and remaining eigenvalues $\lambda_2 \leq \cdots \leq \lambda_N$ ({\red not necessarily bigger than $\lambda_1$}\footnote{We
  will sometimes refer to $\lambda_1$ as the smallest eigenvalue and to $\lambda_2$ as the second smallest one,
because this is the situation for Laplacian matrices of regular graphs, but in principle the value of $\lambda_1$
has no significance for the following.}).
We write $\delta(A) = \lambda_N-\lambda_2$ for the length of the support of the nontrivial spectrum.
The canonical Ising measure with coupling matrix $A=(A_{ij})$ and external field $h=(h_i)$ is defined by
\begin{equation} \label{e:Ising}
  \E_{\nu_{\beta,h}}[F] \propto \sum_{\sigma\in\Omega_{N,m}} e^{-\frac{\beta}{2}(\sigma,A\sigma) + (h,\sigma)} F(\sigma),
\end{equation}
where, for $m \in [-1,1]$ such that $Nm$ is an integer,
\begin{equation}
  \Omega_{N,m} = \{  \sigma \in \{\pm 1\}^N: \sum_{i=1}^N \sigma_i=Nm \},
\end{equation}
{\red and $\propto$ denotes proportionality up to a normalisation constant (independent of $F$).
In the following we will often write $\nu$ as a shorthand for $\nu_{\beta,h}$.}

{\red We define now conservative dynamics on $\Omega_{N,m}$ such that  configurations $\sigma \in \Omega_{N,m}$ are updated to new configurations denoted by $\sigma^{ij} \in\Omega_{N,m}$ for $i,j\in[N]$ and  obtained by exchanging spins  $\sigma_i$ and $\sigma_j$. 
For later purposes, we  consider conservative dynamics reversible with respect to general measures $\mu$ on $\Omega_{N,m}$ with  jump rates denoted by $(c(\sigma,\sigma'))_{\sigma,\sigma'} = (c_\mu(\sigma,\sigma'))_{\sigma,\sigma'}$.
We will always assume that $c(\sigma,\sigma')=0$ unless $\sigma'=\sigma^{ij}$ for some $i,j \in [N]$
(not necessarily neighbours). The Dirichlet form associated with these rates is defined as 
\begin{equation} 
\label{e:Dirichlet- general}
D_\mu^{c} (F,G) = 
\frac{1}{2}\sum_{\sigma'} \E_\mu \qa{c_\mu(\sigma,\sigma') (F(\sigma')-F(\sigma))
\p{G(\sigma')-G(\sigma)}},
\end{equation}
and we always set $D_\mu^c(F)= D_\mu^c(F,F)$.}

{\red The speed of relaxation of these dynamics can be quantified by functional inequalities which are defined below.}
Given  any measure $\mu$ on $\Omega$ and a function $F : \Omega \to \R_+$, the relative entropy and variance are defined by
\begin{align}
  \label{e:ent-def}
  \ent_\mu(F) &= \E_{\mu}[\Phi(F)] -\Phi(\E_\mu[F]), \qquad \Phi(x)=x\log x,\\
  \label{e:var-def}
  \var_\mu(F) &= \E_{\mu}[\Phi(F)] -\Phi(\E_\mu[F]), \qquad \Phi(x)=x^2.
\end{align}
{\red Given also a Dirichlet form $D_\mu^c$, we use the convention that the log-Sobolev constant $\gamma$,
the modified log-Sobolev constant $\gamma_m$, and the spectral gap $\lambda$ are given
as the best constants in the inequalities
\begin{align}
  \ent_\mu(F) &\leq \frac{2}{\gamma} D_\mu^c(\sqrt{F})\\
  \ent_\mu(F) &\leq \frac{1}{2\gamma_m} D_\mu^c(F,\log F)\\
  \var_\mu(F) &\leq \frac{1}{\lambda} D_\mu^c(F)
\end{align}
so that always $\gamma \leq \gamma_m \leq \lambda$, see for example \cite{MR2283379}.}

\subsubsection{Spectral conditions}

In all of our results, we assume either of the following conditions. {\red Note that neither condition explicitly depends on the number $N$ of vertices}.

\medskip\noindent
\textbf{Spectral condition (SC).}
Let $\delta(A) = \lambda_N-\lambda_2$ be the length of the support of the nontrivial spectrum of the coupling matrix $A$. 
{\red We say that the spectral condition (SC) holds} if $\beta < 1/(2\delta(A))$, and then set
\begin{equation}
  \Cbeta
  = \frac{1}{1 -2\beta\delta(A)}.\label{condition_SC}
\end{equation}

{\red In the special case of the canonical nearest-neighbour Ising model on a random $d$-regular graph, 
the spectral condition (SC) is satisfied for $\beta<1/(8\sqrt{d-1})$ and the constant $\Cbeta$ is of order $1$ in $N$, see the proof of Corollary~\ref{cor:mp-rrg}.}

\medskip\noindent
\textbf{Covariance condition (CC).}
  Let $\bar\chi^0(\beta)$ be an upper bound on the largest eigenvalue of the covariance matrix $(\cov_{\nu_{\beta,h}}(\sigma_i;\sigma_j))_{i,j}$
  of the canonical Ising model on $\Omega_{N,m}$, uniformly in $h\in\R^N$, i.e.,
  \begin{equation}
    \red
    \bar\chi^0(\beta) = \sup_{h \in \R^N} \sup_{|f|_2=1} \var_{\nu_{\beta,h}}((\sigma,f)),
  \end{equation}
  and set
  \begin{equation} \label{condition_CC}
    \Cbeta
    = \exp\qa{\int_0^\beta \bar\chi^0(t)\, dt}
    .
\end{equation}

The spectral condition (SC) is easy to verify and applies at sufficiently high temperatures.
The more general covariance condition (CC) 
{\red trivially holds with an $N$-dependent constant since $\bar\chi^0(t)\leq N$ for any $t$ as spins are bounded by $1$ in absolute value. 
The point is to get an estimate on $\bar\chi^0(t)$ that has better dependence on the number $N$ of sites. 
In particular, in the proof of Theorem~\ref{thm:mlsi}, we will show that the covariance condition (CC)}
is implied by (SC) {\red with $\Cbeta$ of order $1$ in $N$ when $\beta<1/(2\delta(A))$}.

\begin{remark}
\label{rem: Glauber}
The Glauber dynamics analogue of the spectral condition appeared in \cite{MR3926125} for the log-Sobolev inequality
and other functional inequalities under this condition were later considered in \cite{MR4408509,2106.04105,2203.04163}.
Compared to the previously existing high temperature conditions for functional inequalities,
the spectral condition on the coupling matrix for Glauber dynamics covers the SK model up to $\beta<1/4$
(which is still the best known condition for fast relaxation of Glauber dynamics for the SK model, but see \cite{2203.05093} for
a different sampling strategy up to $\beta<1$).
The Glauber version of the spectral condition involves the \emph{smallest} eigenvalue $\lambda_1$ instead of the \emph{second smallest} eigenvalue $\lambda_2$ here.

The Glauber dynamics analogues {\red of the covariance condition} appeared in \cite{MR4705299} and \cite{2203.04163}, see also the review \cite{2307.07619}.
{\red For ferromagnetic spin couplings, i.e., $A_{ij}\leq 0$ for $i\neq j$,
the  uniformity in the external field condition is automatically as a consequence of \cite{MR4586225}.}
For Glauber dynamics of {\red ferromagnetic} Ising models {\red the covariance condition (CC) applies in a very general setting up to the critical point with
constant $\Cbeta$ uniform in $N$}.  
It moreover implies a polynomial dependence of the log-Sobolev constant near the critical temperature
under the mean-field bound on the susceptibility which holds on $\Lambda \subset \Z^d$ if $d>5$, see \cite{MR4705299}.
\end{remark}

{\red 
\subsubsection{Main results for mean-field dynamics}

A canonical choice of jump rates are those of the standard (or mean-field) Dirichlet form
which are $c_{\mu}(\sigma,\sigma') = \frac{1}{2N}(1+ \mu (\sigma')/ \mu (\sigma))$ and the corresponding   Dirichlet form is simply denoted by 
\begin{equation}
  D_\mu (F) 
  = \frac{1}{2}\sum_{\sigma'} \E_\mu \qa{c_\mu (\sigma,\sigma') (F(\sigma)-F(\sigma'))^2}
  = \frac{1}{2N} \sum_{\sigma'} \E_\mu \qa{(F(\sigma)-F(\sigma'))^2}.
\end{equation}
The notation $D_{\mu}$ for a measure $\mu$ on $\Omega_{N,m}$ henceforth always denotes the standard Dirichlet form.
Our results also apply to the Dirichlet form of the down-up walk, see Sections~\ref{sec:inftemp}--\ref{sec:thm_mlsi}.}

\begin{theorem}\label{thm:mlsi}
  {\red Let $N\geq 1$ and $m\in[-1,1]$ be such that $Nm$ is an integer.} 
  Assume either (SC) or (CC) {\red and let $\Cbeta$ be the corresponding constant.}
  Assume further 
  $\max_i\sum_{j\neq i} |A_{ij}| \leq \bar A$ and $\max_i |h_i| \leq \bar h$ for some $\bar A,\bar h>0$.  
Then, 
{\red for any test function $F:\Omega_{N,m}\to\R_+$, }
  the canonical Ising model {\red $\nu = \nu_{\beta,h}$} satisfies
  \begin{equation} \label{e:mlsi}
	  \ent_\nu(F) \leq \Cbeta C(\beta \bar A,\bar h) D_\nu(F, \log F),
  \end{equation}
where 
  \begin{equation} \label{e:Dirichlet-standard}
    D_\nu(F,G) = \frac{1}{2N}\sum_{i,j=1}^N\E_{\nu}\qB{\p{F(\sigma)-F(\sigma^{ij})}\p{G(\sigma)-G(\sigma^{ij})}}.
  \end{equation}
  {\red The same result  with constant $C(\beta)$ independent of $\bar A,\bar h$ instead of $C(\beta\bar A,\bar h)$ holds if the standard Dirichlet form \eqref{e:Dirichlet-standard} is replaced by that of the down-up walk with jump rates \eqref{e:downup}.}
\end{theorem}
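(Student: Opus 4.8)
The plan is to prove \eqref{e:mlsi} by interpolating in the inverse temperature $\beta$: first establish the inequality at $\beta=0$, where $\nu_{0,h}$ is a product measure conditioned on its magnetisation, and then propagate it to all $\beta<1/(2\delta(A))$ along a stochastic-localisation (Polchinski) flow that gradually turns on the coupling matrix $A$, with the covariance condition controlling the growth of the constant. As a preliminary reduction I would first show that (SC) implies (CC) with $\Cbeta$ of order $1$ in $N$, so that it suffices to argue under (CC). The key point is that $|\sigma|^2=N$ is constant on $\Omega_{N,m}$, so $\nu_{\beta,h}$ is unchanged if $A$ is replaced by $A-cI$; optimising $c$ we may assume the operator norm of $A$ on $\mathbf{1}^\perp$ equals $\frac12\delta(A)$, which is exactly why only the spread $\lambda_N-\lambda_2$, and not $\lambda_1$ as for Glauber, enters. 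Differentiating in $\beta$ and using the variance decomposition of the mixture representation below --- the covariance matrix of $\nu_{\beta,h}$ equals the average of the covariances at smaller coupling plus the covariance of the conditional means, the latter being a Lipschitz image of the interpolating Gaussian with Lipschitz constant the covariance norm --- yields a differential inequality of the form $\frac{d}{d\beta}\bar\chi^0(\beta)\leq\delta(A)\,\bar\chi^0(\beta)^2$; together with $\bar\chi^0(0)\leq 2$ (the covariance matrix of $\nu_{0,h}$ has nonpositive off-diagonal entries by negative association and row sums zero, hence spectral radius at most twice its largest diagonal entry by Gershgorin) this integrates to a bound on $\exp[\int_0^\beta\bar\chi^0(t)\,dt]$ that is finite and independent of $N$ whenever (SC) holds.

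The base case $\beta=0$ is as follows. Identifying $\sigma\in\Omega_{N,m}$ with the set $S$ of its $+1$-spins, $\nu_{0,h}$ is the distribution on $k$-subsets of $[N]$, $k=N(1+m)/2$, with generating polynomial proportional to $\sum_{|S|=k}\prod_{i\in S}e^{2h_i}z_i$ --- a weighted elementary symmetric polynomial, hence real-stable and in particular log-concave. The modified log-Sobolev inequality for the down-up walk on distributions with log-concave generating polynomial (entropic independence) then gives \eqref{e:mlsi} for $\nu_{0,h}$ with the down-up Dirichlet form and a constant uniform in $N$, $h$ and $m$; this is the content of Section~\ref{sec:inftemp}. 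Comparing the down-up rates with the standard rates $\frac1{2N}(1+\nu(\sigma')/\nu(\sigma))$, which differ only by multiplicative factors that are bounded when $\beta\bar A$ and $\bar h$ are bounded, transfers this to the standard Dirichlet form \eqref{e:Dirichlet-standard} at the cost of a factor $C(\beta\bar A,\bar h)$; this is the source of the two forms of the theorem.

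For the propagation in $\beta$, the ferromagnetic sign admits a Hubbard--Stratonovich identity $\nu_{\beta+d\beta,h}=\int\hat\rho_{d\beta}(dY)\,\nu_{\beta,h+Y}$, where $\hat\rho_{d\beta}$ is a Gaussian on $\mathbf{1}^\perp$ --- the restriction to $\mathbf{1}^\perp$ being forced by the constraint $\sum_i\sigma_i=Nm$ --- reweighted by the partition function $Z_\beta(h+Y)$, with covariance of order $d\beta$ times a positive-definite shift of $A|_{\mathbf{1}^\perp}$. The entropy chain rule gives, with $G(Y)=\E_{\nu_{\beta,h+Y}}[F]$,
\[
  \ent_{\nu_{\beta+d\beta,h}}(F)\;\leq\;\E_{\hat\rho_{d\beta}}\big[\ent_{\nu_{\beta,h+Y}}(F)\big]+\ent_{\hat\rho_{d\beta}}(G).
\]
The first term is handled by the inductive inequality at coupling $\beta$ together with a Dirichlet-form comparison (lossless for the down-up form, lossy by the factor $C(\beta\bar A,\bar h)$ for the standard form). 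For the second term one uses that $\nabla_Y\log G$ equals the shift $\E_{\nu^F}[\sigma]-\E_\nu[\sigma]$ of the magnetisation profile of $\nu_{\beta,h+Y}$ under reweighting by $F$, which to leading order is the action of the covariance matrix of $\nu_{\beta,h+Y}$; combined with the matrix-weighted modified log-Sobolev inequality for $\hat\rho_{d\beta}$ and a second-order expansion --- set up so that the interaction strength of the increment and the covariance bound combine to yield $\bar\chi^0(\beta)$ rather than a spurious factor of $\delta(A)$ --- this bounds $\ent_{\hat\rho_{d\beta}}(G)$ by $(1+o(1))\,\bar\chi^0(\beta)\,d\beta\cdot\ent_{\nu_{\beta+d\beta,h}}(F)$. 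Rearranging, dividing through by the entropy and letting $d\beta\to 0$ yields a differential inequality for the modified log-Sobolev constant which integrates, using the base case, to \eqref{e:mlsi} with constant $\Cbeta\,C(\beta\bar A,\bar h)$ --- the exponential $\Cbeta=\exp[\int_0^\beta\bar\chi^0(t)\,dt]$ being exactly what the sharp constant $1$ in the per-step bound produces. I would run this argument directly for any measure with log-concave generating polynomial perturbed by a quadratic interaction, which covers the matroid generalisation without change: the set $\Omega_{N,m}$ enters only through $\nu_{0,h}$ in the base case.

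The main obstacle is that the flow and entropy-decomposition machinery of this type was developed for single-site dynamics on product spaces, where the building block is the one-site conditional measure and entropy tensorises over sites, whereas here $\Omega_{N,m}$ is not a product and the dynamics is conservative: the building block at $\beta=0$ is itself the non-trivial Bernoulli--Laplace modified log-Sobolev inequality, and the Gaussian increments must be made compatible with the hard magnetisation constraint. Concretely, the two delicate quantitative points are (i) the Dirichlet-form comparison $\E_{\hat\rho_{d\beta}}[D_{\nu_{\beta,h+Y}}(F,\log F)]\leq(1+o(1))\,D_{\nu_{\beta+d\beta,h}}(F,\log F)$ for conservative rates, which is only lossless for the down-up walk and hence forces the $\bar A,\bar h$-dependence in the standard-form statement; and (ii) obtaining the \emph{sharp} constant $1$ in the bound on $\ent_{\hat\rho_{d\beta}}(G)$, which requires identifying the magnetisation shift $\E_{\nu^F}[\sigma]-\E_\nu[\sigma]$ with the action of the covariance matrix --- precisely the object bounded by $\bar\chi^0$ --- to leading order, rather than controlling it by a cruder estimate that would leave behind factors of $\delta(A)$ or the dimension.
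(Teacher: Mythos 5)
Your overall architecture is the same as the paper's: decompose $\nu$ as a mixture of field-tilted infinite-temperature canonical measures over a Gaussian on the zero-sum hyperplane \eqref{eq_decomp_Ising}, control the entropy loss along the flow by a covariance bound (the entropic stability estimate of \cite{2203.04163}, with the covariance bound as in Lemma~\ref{lem:covbd}), invoke the down-up-walk modified log-Sobolev inequality for measures with log-concave generating polynomial as the base case (Theorem~\ref{thm:hs-lsi}), and only at the end convert the down-up form to the standard form via \eqref{e:equiv-downup-standard}, which is where the $C(\beta\bar A,\bar h)$ dependence enters. Whether one runs this in one shot over $[0,\beta]$ (as the paper does) or incrementally in $d\beta$ (as you propose) is a cosmetic difference, and your Gershgorin/negative-correlation bound $\bar\chi^0(0)\le 2$ is the same computation as \eqref{e:var-bd}; the paper proves the finite-$\beta$ covariance bound directly by Bakry--\'Emery for the renormalised potential (Lemmas~\ref{lem:Vren}--\ref{lem:covbd}) rather than through your differential inequality for $\bar\chi^0$, but this is a detail.

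The genuine gap is your point (i), which you flag as ``delicate'' and then assert rather than prove: the comparison of the field-averaged down-up Dirichlet forms with the down-up Dirichlet form of $\nu$ itself (in the paper, $\E_{\nu_0}[D^{\mathrm{du}}_{\pi}(F,\log F)]\le K(\beta)\,D^{\mathrm{du}}_{\nu}(F,\log F)$, Lemma~\ref{lemm_bound_DF_no_concavity-bis}). This is exactly the step that does not follow from any soft argument: the quantity $\pi(\sigma)\,c^{\mathrm{du}}_{\pi}(\sigma,\sigma^{ij})$ is \emph{not} concave in $\pi$, because the down-up rate has the external field in a normalising denominator, so averaging over the Gaussian field cannot be pushed inside by Jensen, and the comparison is in fact not ``lossless'': the paper obtains $K(\beta)\le e^{8\beta}$ by an explicit computation (Section~\ref{sec:downup}) in which the averaged rate is rewritten as a Gaussian expectation with mean $\sigma-A\sigma$ and covariance $\beta^{-1}(P-A)$, Jensen for $x\mapsto 1/x$ is applied under a tailor-made probability measure $U_h$ on the target sites, and the resulting exponents are shown to cancel or stay bounded using $\sigma_i=1$, $\sigma_j=-1$ and $0<A<P$. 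Your incremental variant faces the identical non-concavity obstruction at each step (the $1+o(1)$ you need is precisely this lemma in infinitesimal form), so without supplying this argument the proposal is incomplete at its crux; the remaining ingredients you describe are correct and are the ones the paper uses.
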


{\red None of the constants in Theorem~\ref{thm:mlsi} explicitly depend on $N$. In particular, if $\beta\bar A,\bar h$ and $\Cbeta$ are bounded uniformly in $N$ then so are the modified log-Sobolev constants.

It was recently shown in a general setting that the modified
log-Sobolev inequality (with Dirichlet form $D_\nu(F,\log F)$) implies the usual log-Sobolev inequality (with Dirichlet form  $D_\nu(\sqrt{F}) = D_\nu(\sqrt{F}, \sqrt{F})$) at the cost of a factor that is $O_{\beta\bar A,\bar h}(\log N)$ in our situation \cite{MR4620718}.
In Corollary \ref{cor:lsi} below, we deduce from Theorem \ref{thm:mlsi} a spectral gap estimate of order 1 in $N$ and a 
log-Sobolev inequality with an additional logarithmic factor, by using \cite{MR4620718}.
We stress that the factor $\log N$ for the (unmodified) log-Sobolev constant
is necessary for the validity of a uniform estimate with respect to the density $m \in [-1,1]$. Indeed for the standard Dirichlet form \eqref{e:Dirichlet-standard}, the log-Sobolev inequality
is not true with constant of order $1$ in $N$ without restrictions on $m$ and $h$,
see \cite[Theorem 5]{MR1675008} which shows a logarithmic correction for $m$ very close to $\pm 1$.}

\begin{corollary} \label{cor:lsi}
  Under the same assumption as in Theorem~\ref{thm:mlsi} (and with the same Dirichlet form), writing $D_\nu(F)$ for $D_\nu(F,F)$:
      \begin{equation} \label{e:sg-cor}
    \var_\nu(F) \leq \Cbeta C(\beta \bar A,\bar h) D_\nu(F),
  \end{equation}
  and
  \begin{equation} \label{e:lsi-cor}
    \ent_\nu(F) \leq \Cbeta C(\beta \bar A,\bar h) (\log N) D_\nu(\sqrt{F}).
  \end{equation}
\end{corollary}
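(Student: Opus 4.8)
The plan is to deduce Corollary~\ref{cor:lsi} directly from Theorem~\ref{thm:mlsi} by invoking two standard general facts together with the black-box result of \cite{MR4620718}. The statement asks for two inequalities: the spectral gap bound \eqref{e:sg-cor} and the log-Sobolev bound \eqref{e:lsi-cor}, both with the \emph{same} Dirichlet form $D_\nu$ as in Theorem~\ref{thm:mlsi}. Since Theorem~\ref{thm:mlsi} already gives the modified log-Sobolev inequality $\ent_\nu(F) \leq \Cbeta C(\beta\bar A,\bar h)\, D_\nu(F,\log F)$, the work is entirely in extracting the two weaker/comparable inequalities from it, so no new probabilistic input about the canonical Ising model is needed.

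For the spectral gap \eqref{e:sg-cor}: the standard ordering of functional-inequality constants recalled in the excerpt, namely $\gamma \leq \gamma_m \leq \lambda$ (see \cite{MR2283379}), means precisely that a modified log-Sobolev inequality with constant $1/(2\gamma_m)$ implies a Poincar\'e inequality with constant $1/\lambda \leq 1/(2\gamma_m)$ — equivalently, the MLSI implies the spectral gap with at least as good a constant. Thus applying this implication with $2\gamma_m = \bigl(\Cbeta C(\beta\bar A,\bar h)\bigr)^{-1}$ (up to the factor-of-$2$ normalisation convention fixed in the excerpt, which I would absorb into the generic constant $C(\beta\bar A,\bar h)$) immediately yields $\var_\nu(F) \leq \Cbeta C(\beta\bar A,\bar h)\, D_\nu(F)$, which is \eqref{e:sg-cor}. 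Concretely one can also argue by hand: linearise, i.e.\ apply the MLSI to $F = 1 + \epsilon G$ for a mean-zero (with respect to $\nu$) function $G$ and let $\epsilon \to 0$; both $\ent_\nu(1+\epsilon G)$ and $D_\nu(1+\epsilon G, \log(1+\epsilon G))$ are $\tfrac{\epsilon^2}{2}$ times $\var_\nu(G)$ and $D_\nu(G)$ respectively at leading order, recovering the Poincar\'e inequality with the same constant.

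For the log-Sobolev inequality \eqref{e:lsi-cor}: here I would simply quote \cite{MR4620718}, which shows in a general discrete setting that a modified log-Sobolev inequality implies the usual log-Sobolev inequality at the cost of a multiplicative factor of order $\log N$ — more precisely, a factor controlled in terms of $\log(1/\min_\sigma \nu(\sigma))$, which in our setting, where $\nu$ is supported on $\Omega_{N,m} \subset \{\pm1\}^N$ with weights of the form $e^{O(\beta\bar A N + \bar h N)}$ normalised over at most $2^N$ configurations, is $O_{\beta\bar A,\bar h}(N)$; but combined with the entropy-of-the-support factor the net overhead is $O_{\beta\bar A,\bar h}(\log N)$ as stated in the excerpt right before the corollary. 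Feeding Theorem~\ref{thm:mlsi} into this comparison gives $\ent_\nu(F) \leq \Cbeta C(\beta\bar A,\bar h)(\log N)\, D_\nu(\sqrt F)$, i.e.\ \eqref{e:lsi-cor}, after noting that $D_\nu(F,\log F)$ and $D_\nu(\sqrt F,\sqrt F)$ are comparable up to the constant built into that comparison theorem (this is exactly the content of \cite{MR4620718}, so it need not be reproved).

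The only genuine subtlety — and the one place where I would be careful rather than invoke a black box — is checking that the hypotheses of \cite{MR4620718} are met by the Dirichlet form $D_\nu$ of Theorem~\ref{thm:mlsi} (a conservative, bounded-rate chain on the finite state space $\Omega_{N,m}$ that is irreducible, since any two configurations of equal magnetisation are connected by transpositions), and tracking that the $\log N$ factor really comes out with no hidden $m$- or $h$-dependence beyond what is already absorbed in $C(\beta\bar A,\bar h)$. Since the excerpt has already flagged that the $\log N$ factor is \emph{necessary} for uniformity in $m \in [-1,1]$ (citing \cite[Theorem~5]{MR1675008}), there is no hope of removing it, and the statement of the corollary is consistent with this. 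Thus the proof is short: combine Theorem~\ref{thm:mlsi}, the elementary MLSI~$\Rightarrow$~Poincar\'e implication, and \cite{MR4620718}.
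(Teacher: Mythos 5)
Your overall route is the same as the paper's: the spectral gap \eqref{e:sg-cor} follows from the standard implication MLSI $\Rightarrow$ Poincar\'e (your linearisation $F=1+\epsilon G$ is exactly the textbook proof of $\gamma_m\le\lambda$), and \eqref{e:lsi-cor} is obtained by feeding Theorem~\ref{thm:mlsi} into the MLSI-to-LSI comparison of \cite{MR4620718}. That part is fine.

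There is, however, one concrete flaw in your justification of the $\log N$ factor. The comparison constant in \cite{MR4620718} is of the form $C_{\rm mLSI}\log(1/c_{\min})$, where $c_{\min}$ is the minimal \emph{nonzero transition probability of the chain}, not $\log(1/\min_\sigma\nu(\sigma))$ as you write. With your quantity the bound would be $O_{\beta\bar A,\bar h}(N)$, and the subsequent appeal to an ``entropy-of-the-support factor'' that brings this back down to $\log N$ does not correspond to anything in the cited theorem; as written, that step fails. The correct check is elementary but needs the assumptions $\max_i\sum_{j\ne i}|A_{ij}|\le\bar A$ and $\max_i|h_i|\le\bar h$: for the standard Dirichlet form the rates are $c_\nu(\sigma,\sigma^{ij})=\frac{1}{2N}\bigl(1+\nu(\sigma^{ij})/\nu(\sigma)\bigr)\ge \frac{1}{2N}$, and more generally a single spin swap changes the energy by at most $O(\beta\bar A+\bar h)$, so $1/c_{\min}=O_{\beta\bar A,\bar h}(N)$ and hence $\log(1/c_{\min})=O_{\beta\bar A,\bar h}(\log N)$. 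Inserting this (together with $C_{\rm mLSI}=\Cbeta\, C(\beta\bar A,\bar h)$ from Theorem~\ref{thm:mlsi}) gives \eqref{e:lsi-cor}; note also that it is precisely here that the boundedness assumptions on $A$ and $h$ enter, so the $N$-dependence claim should be phrased through the jump rates rather than through $\min_\sigma\nu(\sigma)$.
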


The constants $C(\beta\bar A,\bar h)$ in the different inequalities can be different, but $\Cbeta$ is always
the constant in   either condition (SC) or (CC).

\begin{proof}
  It is a standard fact that the modified log-Sobolev inequality implies the spectral gap inequality \eqref{e:sg-cor}.
  {\red The main result of \cite{MR4620718} then implies the log-Sobolev inequality \eqref{e:lsi-cor} with constant $20 C_{\rm mLSI}\log (1/c_{\min})$, 
  where $C_{\rm mLSI}$ is the constant in the right-hand side of~\eqref{e:mlsi} and $c_{\min}$ is the minimal transition probability associated with the dynamics of the standard Dirichlet. 
  This $c_{\min}$ satisfies $1/c_{\min} = O_{\beta \bar A,\bar h}(N)$ under our assumptions. }
\end{proof}

\subsubsection{Main results for Kawasaki dynamics}

The above inequalities apply to the standard Dirichlet form \eqref{e:Dirichlet-standard} or the down-up Dirichlet form \eqref{e:downup}, both
associated with dynamics in which exchanges for all pairs of spins are permitted. 
Kawasaki dynamics will refer to nearest-neighbour jump rates reversible with respect to $\nu_{\beta,h}$,
provided that $[N]$ is identified with the vertices of a graph so that the notion of nearest-neighbours makes sense.
{\red The corresponding Dirichlet form is
\begin{equation}
  \label{eq: Dirichlet Kawasaki}
D^{\mathrm{K}}_\nu(F) = \sum_{i\sim j}\E_\nu\qa{(\sqrt{F}(\sigma)-\sqrt{F}(\sigma^{ij}))^2},
  \end{equation}
where the exchange are restricted to neighbouring sites on the graph, denoted by $i\sim j$.}

For the usual log-Sobolev inequality and the spectral gap, the change from the standard Dirichlet form
to the nearest-neighbour Dirichlet form
can be achieved by comparison inequalities, called moving particle lemmas. 
{\red Theorem~\ref{thm:rrg} as well as estimates for Kawasaki dynamics on the $d$-dimensional lattice} 
follow from Corollary~\ref{cor:lsi} and versions of the moving particle lemma
(combined with simple estimates of the geometry in the case of the random regular graph).
{\red This is the content of the next two corollaries, with Corollary~\ref{cor:rrg} a more precise statement of Theorem~\ref{thm:rrg}.}

\begin{corollary}
\label{cor:rrg}
Let $d\geq 3$. For the canonical nearest-neighbour Ising model {\red  (ferromagnetic or antiferromagnetic)} on a random $d$-regular graph on $N$ vertices with $\beta < 1/(8 \sqrt{d-1})$,
  the inverse log-Sobolev constant {\red $1/\gamma$} of Kawasaki dynamics  is bounded by $C(d,\beta,h)\log^5\! N$ {\red with high probability on the randomness of the graph as $N\to\infty$}.
  Hence Kawasaki dynamics mixes in at most $O_{d,\beta,h}(N\log^6\! N)$ steps {\red with high probability}.
\end{corollary}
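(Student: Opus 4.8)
The plan is to obtain the log-Sobolev inequality for the \emph{all-pairs} (standard) Dirichlet form from Corollary~\ref{cor:lsi}, transfer it to the nearest-neighbour Dirichlet form by a moving particle (comparison) lemma, and convert the resulting log-Sobolev constant into a mixing time; the only probabilistic input is a few elementary spectral and geometric facts about the random $d$-regular graph that hold with high probability.

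\textbf{Step 1 (spectral condition and all-pairs inequality).} For the nearest-neighbour coupling matrix $A$ on a $d$-regular graph, $\bar A=\max_i\sum_{j\ne i}|A_{ij}|=d$, and the nontrivial eigenvalues of $A$ agree, up to an overall sign, with those of the adjacency matrix; by the standard second-eigenvalue bound for random regular graphs, $|\lambda_i|\le 2\sqrt{d-1}+o(1)$ with high probability, so $\delta(A)=\lambda_N-\lambda_2\le 4\sqrt{d-1}+o(1)$. Since $\beta<1/(8\sqrt{d-1})$, for $N$ large this gives $2\beta\,\delta(A)<1$, so (SC) holds with $\Cbeta=(1-2\beta\,\delta(A))^{-1}=O_{d,\beta}(1)$ uniformly in $N$; the sign of the couplings plays no role, so the ferromagnetic and antiferromagnetic cases are identical. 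Corollary~\ref{cor:lsi}, specifically~\eqref{e:lsi-cor}, then gives for all $F\ge 0$
\[
  \ent_\nu(F)\ \le\ C_1(d,\beta,h)\,(\log N)\,D_\nu(\sqrt F),
\]
with $D_\nu$ the standard Dirichlet form~\eqref{e:Dirichlet-standard}: the inverse log-Sobolev constant of the all-pairs dynamics is $O_{d,\beta,h}(\log N)$.

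\textbf{Step 2 (all pairs to nearest neighbours; mixing time).} I then apply the moving particle lemma to compare $D_\nu$ with the Kawasaki form $D^{\mathrm K}_\nu$ of~\eqref{eq: Dirichlet Kawasaki}: one writes each exchange of a pair $\{i,j\}$ as a bounded number of nearest-neighbour exchanges along a path from $i$ to $j$ in $G$, applies Cauchy--Schwarz to the telescoping sum, and sums over pairs, bounding the comparison constant by the lengths of the routing paths and the congestion they create on each edge. With high probability the random $d$-regular graph has diameter $(1+o(1))\log_{d-1}N$ and good expansion, so the $\binom N2$ pairs can be routed along short paths with controlled congestion; this yields $D_\nu(\sqrt F)\le C_2(d,\beta,h)\,(\log^4\!N)\,D^{\mathrm K}_\nu(F)$, and combining with Step~1 gives $\ent_\nu(F)\le C(d,\beta,h)\,(\log^5\!N)\,D^{\mathrm K}_\nu(F)$, i.e.\ $1/\gamma\le C(d,\beta,h)\log^5\!N$ for Kawasaki. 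The mixing-time bound follows from the standard implication $t_{\mathrm{mix}}=O\!\big(\gamma^{-1}\log\log(1/\pi_{\min})\big)$ in continuous time, using $\pi_{\min}\ge e^{-O_{d,\beta,h}(N)}$ (since $\#\Omega_{N,m}\le 2^N$ and the Gibbs weights lie between $e^{-O_{d,\beta,h}(N)}$ and $e^{O_{d,\beta,h}(N)}$), so $\log\log(1/\pi_{\min})=O(\log N)$; passing to the discrete-time chain multiplies by the total jump rate $O_d(N)$, giving $O_{d,\beta,h}(N\log^6\!N)$ steps.

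\textbf{Main obstacle.} The delicate step is the moving particle lemma, precisely because the random $d$-regular graph is not vertex-transitive and the canonical Ising measure is not exchangeable: transporting a single spin along a path of length $\Theta(\log N)$ by changing variables one coordinate at a time accumulates a Radon--Nikodym factor of order $e^{\Theta(\beta d\log_{d-1}N)}=N^{\Theta(\beta d/\log(d-1))}$, polynomial rather than polylogarithmic in $N$, which would destroy the bound. Keeping this factor under control --- by routing through the expansion of the graph and/or a multiscale scheme that never composes more than a bounded number of elementary exchanges, exploiting that a single nearest-neighbour swap changes the Hamiltonian by only $O(\beta\bar A+\bar h)$ uniformly in $N$ --- is the heart of the argument; by contrast, the geometric inputs used (diameter, expansion, low-congestion routing of all pairs) are standard properties of random regular graphs valid with high probability.
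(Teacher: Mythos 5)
Your overall architecture is the same as the paper's: verify (SC) via Friedman's second-eigenvalue bound so that $\delta(A)\le 4\sqrt{d-1}+o(1)$ and $2\beta\delta(A)<1$, invoke Theorem~\ref{thm:mlsi} and Corollary~\ref{cor:lsi} to get $\ent_\nu(F)\le C(d,\beta,h)(\log N)D_\nu(\sqrt F)$ for the all-pairs Dirichlet form, compare with the Kawasaki form, and finish with the log-Sobolev mixing-time bound $t_{\mathrm{mix}}\le \frac{2}{\gamma}(1+\tfrac14\log\log(1/\nu_{\min}))$ plus the continuous-to-discrete conversion. Step~1 and the mixing-time step are correct and essentially identical to the paper.

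The genuine gap is Step~2: you never actually prove the comparison $D_\nu(\sqrt F)\le C\,(\log^4\!N)\,D^{\mathrm K}_\nu(F)$, and you yourself identify why your proposed route (telescoping an exchange of $\{i,j\}$ into nearest-neighbour swaps along a path and applying Cauchy--Schwarz) fails: the intermediate configurations differ from $\sigma$ in up to $\Theta(\log N)$ coordinates, so the change-of-measure factor can be $e^{\Theta((\beta d+\bar h)\log N)}$, i.e.\ polynomial in $N$. Your suggested fixes (expansion-based routing, a multiscale scheme composing only boundedly many exchanges) are left entirely unspecified, so the heart of the argument is missing. The paper avoids the accumulation differently, and without any new multiscale idea: for each pair it fixes a \emph{geodesic} path $\gamma_{ij}$ (which is an induced path, so the model conditioned on the spins outside $\gamma_{ij}$ has the same interaction range along the path), and then applies the known one-dimensional moving particle lemma (Lemma~\ref{lem:mp}, from \cite{MR2271489,MR1621569}): after spacing sites by $R+1$, the conditional law on these sites is a product Bernoulli measure conditioned on its sum with fields bounded by $\bar A+\bar h$, and for such measures the comparison constant $C(\beta\bar A,\bar h,R)\,|i-j|$ depends only on the field bound, not on the path length --- this is exactly the ingredient your sketch lacks. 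The remaining graph-theoretic input is then only the crude congestion count $\max_e \frac1{2N}\sum_{i,j}|\gamma_{ij}|\mathbf 1_{e\in\gamma_{ij}}\le \frac1N D^3(d-1)^D\le C_d\log^4\!N$ using the diameter bound $D\le \log_{d-1}N+\log_{d-1}\log N+C$ (Corollary~\ref{cor:mp-rrg}); no expander-based low-congestion routing is needed, and your assertion that such routing yields $\log^4\!N$ is likewise unproved as stated. So as written the proposal does not establish the $\log^5\!N$ bound on $1/\gamma$; it reduces it to precisely the lemma it does not supply.
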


\begin{corollary} \label{cor:zd}
  Let $d\geq 1$, let $\Lambda \subset \Z^d$ be a hypercube of side length $L$,
  and assume that either condition (SC) or (CC) holds.
  {\red Assume that there are $\bar A,\bar h>0$ such that $\max_i \sum_{j \neq i}|A_{ij}|\leq \bar A$ and $\max_i |h_i|\leq \bar h$}, and that
  the finite-range condition $A_{ij} = 0$ if $\dist(i,j)>R$ for some $R<\infty$ holds.

  Then, uniformly in the magnetisation $m$, the inverse spectral gap {\red $1/\lambda$} of Kawasaki dynamics on $\Lambda$ is bounded by $O_{\beta\bar A,\bar h}(L^2)$ 
  and the inverse log-Sobolev constant {\red $1/\gamma$} bounded by $O_{\beta\bar A,\bar h}(L^2\log L)$. 
\end{corollary}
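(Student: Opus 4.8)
}

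The plan is to combine Corollary~\ref{cor:lsi}, applied with $N=L^d$ vertices, with a \emph{moving particle lemma} that compares the standard Dirichlet form \eqref{e:Dirichlet-standard} (exchanges of all pairs of spins) with the nearest-neighbour Kawasaki Dirichlet form \eqref{eq: Dirichlet Kawasaki} on the cube $\Lambda$. Under either (SC) or (CC) the constant $\Cbeta$ is controlled by hypothesis and does not depend on $N$, so Corollary~\ref{cor:lsi} yields
\begin{equation}\label{e:zd-input}
  \var_\nu(F) \le C(\beta\bar A,\bar h)\, D_\nu(F), \qquad
  \ent_\nu(F) \le C(\beta\bar A,\bar h)\,(\log N)\, D_\nu(\sqrt F),
\end{equation}
where $\log N = d\log L$ is of order $\log L$. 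It then remains to prove that there is a constant $C=C(d,R,\beta\bar A,\bar h)$ such that, for every $G:\Omega_{N,m}\to\R$,
\begin{equation}\label{e:zd-mpl}
  \frac{1}{2N}\sum_{i,j=1}^N \E_\nu\bigl[(G(\sigma)-G(\sigma^{ij}))^2\bigr]
  \ \le\ C\,L^2\sum_{i\sim j}\E_\nu\bigl[(G(\sigma)-G(\sigma^{ij}))^2\bigr].
\end{equation}
Applying \eqref{e:zd-mpl} with $G=F$ (respectively $G=\sqrt F$) and inserting the result into the two inequalities of \eqref{e:zd-input} gives $1/\lambda=O_{\beta\bar A,\bar h}(L^2)$ and $1/\gamma=O_{\beta\bar A,\bar h}(L^2\log L)$; the bounds are uniform in $m$ because no input above depends on $m$ (and the $\log L$ in the log-Sobolev bound is in any case unavoidable for an $m$-uniform statement, see the discussion after Theorem~\ref{thm:mlsi}).

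To prove \eqref{e:zd-mpl}, I would fix for each ordered pair $(i,j)$ a nearest-neighbour path $\gamma_{ij}$ in $\Lambda$ from $i$ to $j$ with $|\gamma_{ij}|\le\operatorname{diam}\Lambda\le dL$, chosen from a family of canonical paths whose edge congestion is balanced, so that every nearest-neighbour edge of $\Lambda$ lies on $O(L^{d+1})$ of the paths (naive coordinate-by-coordinate paths are insufficient for $d\ge2$ and one uses a more balanced routing). One realises the transposition $\sigma\mapsto\sigma^{ij}$ as a composition of $2|\gamma_{ij}|-1$ nearest-neighbour transpositions --- transport $\sigma_i$ along $\gamma_{ij}$ to site $j$, then transport the displaced spins back --- and writes $\sigma=\eta^{(0)},\dots,\eta^{(2|\gamma_{ij}|-1)}=\sigma^{ij}$ for the intermediate configurations, each differing from its predecessor by a single nearest-neighbour swap. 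A telescoping sum and the Cauchy--Schwarz inequality give $(G(\sigma)-G(\sigma^{ij}))^2\le(2|\gamma_{ij}|-1)\sum_\ell(G(\eta^{(\ell)})-G(\eta^{(\ell-1)}))^2$. Taking $\E_\nu$ and, in each summand, changing variables so that $\eta^{(\ell-1)}$ becomes a generic configuration and $\eta^{(\ell)}$ its image under a fixed nearest-neighbour swap, one picks up the Radon--Nikodym factor $\nu(\eta^{(\ell-1)})/\nu(\sigma)$; since $\eta^{(\ell)}$ differs from $\sigma$ by a rigid shift of a segment of $\gamma_{ij}$ and $A$ has finite range $R$, this shift changes only $O(R^d)$ bonds near each seam, each entry of $A$ being $\le\bar A$ in absolute value, so the factor is bounded by $e^{O(\beta R^d\bar A)+O(\bar h)}=O_{\beta\bar A,\bar h}(1)$. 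Summing over $(i,j)$, each nearest-neighbour edge $e$ is then weighted by $\sum_{(i,j):\,e\in\gamma_{ij}}(2|\gamma_{ij}|-1)=O(dL\cdot L^{d+1})$, and dividing by $2N=2L^d$ produces the factor $O_d(L^2)$ in front of $\sum_{i\sim j}\E_\nu[(G(\sigma)-G(\sigma^{ij}))^2]$, establishing \eqref{e:zd-mpl}.

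The main obstacle is the moving particle lemma \eqref{e:zd-mpl}, and within it the uniform control of the Gibbs weight ratios arising in the change of variables: this is exactly the step that fails for the established approaches based on comparison with the unconstrained model via mixing conditions, and it is where the finite-range hypothesis $A_{ij}=0$ for $\dist(i,j)>R$ together with the bounds $\max_i\sum_{j\ne i}|A_{ij}|\le\bar A$ and $\max_i|h_i|\le\bar h$ are genuinely used (for the non-interacting exclusion process these ratios are all $1$ and only the path combinatorics matter). A secondary point requiring care is the choice of balanced canonical paths in the cube so that the edge congestion is $O(L^{d+1})$ rather than larger, which is what makes the comparison constant exactly of order $L^2$; this is classical but must be set up compatibly with the rigid-shift structure used to bound the Gibbs factors.
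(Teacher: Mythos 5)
Your reduction of the corollary to Corollary~\ref{cor:lsi} plus a Dirichlet form comparison of order $L^2$ is exactly the paper's structure, but your proof of the comparison inequality has a genuine gap at its central step: the claimed uniform bound $e^{O(\beta R^d\bar A)+O(\bar h)}$ on the Radon--Nikodym factors is false. After $\ell$ steps of your ``transport the spin along $\gamma_{ij}$'' scheme, the intermediate configuration $\eta^{(\ell)}$ differs from $\sigma$ by a cyclic shift of the spins along a sub-segment of the path of length up to order $L$, not just near two seams. The energy difference between $\eta^{(\ell)}$ and $\sigma$ is then generically of order the segment length: each shifted site interacts with \emph{unshifted} off-path neighbours, so even for the translation-invariant nearest-neighbour model in $d\geq 2$ every one of the $\Theta(L)$ shifted sites can contribute $O(\beta\bar A)$ (take $\sigma$ alternating along the path), and likewise a field with $|h_i|\leq\bar h$ but alternating sign along the path contributes $\Theta(\bar h L)$. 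Hence the ratio $\nu(\sigma)/\nu(\eta^{(\ell)})$ can be as large as $e^{c(\beta\bar A+\bar h)L}$, which destroys the $O(L^2)$ comparison constant. The ``only $O(R^d)$ bonds near each seam change'' reasoning would require the couplings and fields to be invariant under the shift along the (bent) path, which is neither assumed nor true; it is valid essentially only in the exchangeable case (zero field, no interaction), where the factors are identically $1$.

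The paper avoids this by not comparing intermediate configurations to $\sigma$ globally at all: for each pair $(i,j)$ it conditions on the spins outside a fixed path $\gamma_{ij}$, so that by the finite-range hypothesis the conditional law along the path is a one-dimensional canonical Ising measure (and, after passing to sites spaced $R+1$ apart, a product Bernoulli measure conditioned on its sum with fields bounded by $\bar A+\bar h$), and then invokes the one-dimensional moving particle lemma, Lemma~\ref{lem:mp}, quoted from \cite[Lemma~5.2]{MR2271489} and \cite[Theorem~6.1]{MR1621569}; the delicate handling of bounded but non-constant fields is precisely the content of those results and cannot be replaced by the naive telescoping-plus-change-of-variables bound. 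What remains is only the congestion count, and there your side remark is also off: the naive coordinate-by-coordinate paths already have per-edge congestion $O(L^{d+1})$ in every dimension (this is what Corollary~\ref{cor:mp-Zd} uses), so no balanced routing is needed --- but that point is harmless, unlike the Radon--Nikodym step, which is where your argument breaks down.
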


\begin{proof}[\red Proof of Theorem~\ref{thm:rrg} \& Corollary~\ref{cor:rrg}]
  For the ferromagnetic (respectively antiferromagnetic) Ising model on a random regular graph on $[N]$,
  the coupling matrix $A$ is minus the adjacency matrix (respectively the adjacency matrix) of the random regular graph.
  Thus $-A$ has Perron--Frobenius eigenvalue $d$ with eigenvector ${\bf 1}$, and it is known that
  the remaining eigenvalues are contained in $[-2\sqrt{d-1}-\epsilon,2\sqrt{d-1}+\epsilon]$ for any $\epsilon>0$,
  with probability tending to $1$ as $N\to\infty$, see \cite{MR2437174}.
The spectral condition (SC) therefore holds for $\beta < 1/(8\sqrt{d-1})$, with high probability.
  Thus Theorem~\ref{thm:mlsi} implies the uniform modified log-Sobolev inequality \eqref{e:mlsi} with respect to the mean-field Dirichlet form,
  and by Corollary~\ref{cor:lsi},
  the (unmodified) log-Sobolev inequality (still with respect to the mean-field Dirichlet form) holds with an additional factor $O_{\beta,h}(\log N)$:
  \begin{equation}
    \ent_\nu(F) \leq C(d,\beta\bar A,\bar h) \frac{\log N}{N} \sum_{i,j}\E_\nu\qa{(\sqrt{F}(\sigma)-\sqrt{F}(\sigma^{ij}))^2}.
  \end{equation}
  The comparison inequality from Corollary~\ref{cor:mp-rrg} now allows to replace the mean-field Dirichlet form by the Kawasaki  {\red Dirichlet form \eqref{eq: Dirichlet Kawasaki}:}
  \begin{equation}
    \ent_\nu(F) \leq C(d,\beta\bar A,\bar h) \log^5\!N 
    D^{\mathrm{K}}_\nu(F).
  \end{equation}
  
  Finally, it is well known that the mixing time is controlled by the log-Sobolev constant $\gamma$ \cite{MR1490046}:
  \begin{equation}
    t_{\text{mix}}(1/e)
    \leq \frac{2}{\gamma} \pa{1+\frac14 \log\log\pa{\max_\sigma\frac{1}{\nu(\sigma)}}}
  \leq \frac{2}{\gamma} \pa{1+\frac14 \log N + C\big(\beta\delta(A),h\big)},
  \end{equation}
  where we used that, for $\sigma \in \Omega_{N,m}$,
  \begin{equation}
    \log \frac{1}{\nu(\sigma)} \leq N\log 2 + \big(\beta \delta(A) + 2\max_i|h_i|\big)N.
  \end{equation}
  This completes the proof.
\end{proof}

\begin{proof}[Proof of Corollary~\ref{cor:zd}]
  By Theorem~\ref{thm:mlsi} and Corollary~\ref{cor:lsi}, we again have
  \begin{align}
    \var_\nu(F) &\leq \Cbeta C(\beta\bar A,\bar h) \frac{1}{L^d}\sum_{i,j}\E_\nu\qa{(\sqrt{F}(\sigma)-\sqrt{F}(\sigma^{ij}))^2},
    \\
    \ent_\nu(F) &\leq \Cbeta C(\beta\bar A,\bar h) \frac{\log L}{L^d} \sum_{i,j}\E_\nu\qa{(\sqrt{F}(\sigma)-\sqrt{F}(\sigma^{ij}))^2},
  \end{align}
  and the proof is completed by the moving particle lemma, Corollary~\ref{cor:mp-Zd}.
\end{proof}

{\red
\subsection{Extension to interacting measures on the bases of a matroid}
\label{sec:logconcave}

The proof of Theorem~\ref{thm:mlsi} also implies the following generalisation.
The set of \emph{bases of a matroid} on $[N]$ is any nonempty collection  $\cB$ of subsets of $[N]$ 
satisfying the basis exchange property:
if $A,B \in \cB$ and $a\in A\setminus B$ then there is $b\in B\setminus A$ such that $(A\setminus \{a\})\cup\{b\} \in \cB$.
In particular, all $A \in \cB$ have the same number of elements.
Examples of bases of a matroid are $\Omega_{N,m}$ where $\sigma \in \Omega_{N,m}$ is identified with the set of $+$ spins $I(\sigma) = \{i \in [N]: \sigma_i =1\}$
(the bases of the uniform matroid)
or the set of spanning trees of a finite graph (the bases of the graphic matroid).
See Section~\ref{sec:inftemp} for the definition of the down-up walk
(also known as bases-exchange walk)
on the bases of a matroid and the associated Dirichlet form denoted by $D^{\mathrm{du}}_\mu$.

\begin{theorem} \label{thm:matroid}
Let $\pi$ be the uniform distribution on the bases of a matroid, viewed as a probability measure on $\omega \in \{0,1\}^N$,
and consider the perturbed probability measure
\begin{equation}
  \mu(\omega) \propto \prod_{e,f}(1+\epsilon_{ef} \omega_e\omega_f) \pi(\omega),
\end{equation}
where $\epsilon_{ef} > -1$ for $e,f\in [N]$.
There is $\bar\epsilon_0>0$ independent of $\pi$ such that if $\bar\epsilon = \max_e \sum_{f} |\epsilon_{ef}|\leq \bar\epsilon_0$ then
the down-up walk on $\mu$ satisfies a uniform modified log-Sobolev inequality:
\begin{equation} 
  \ent_\mu(F) \leq C(\bar\epsilon) D^{\mathrm{du}}_\mu(F,\log F).
\end{equation}
Moreover, a spectral condition involving the matrix
$(\log(1+\epsilon_{ef}))_{e,f\in [N]}$
can be formulated as well.
\end{theorem}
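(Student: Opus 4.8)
The plan is to follow the proof of Theorem~\ref{thm:mlsi} essentially verbatim, after isolating the two properties of the canonical Ising model that it uses and checking that both survive when the base space $\Omega_{N,m}$ (the bases of the uniform matroid) is replaced by the bases of a general matroid. Since $\omega_e\omega_f\in\{0,1\}$ one has $\log(1+\epsilon_{ef}\omega_e\omega_f)=\omega_e\omega_f\log(1+\epsilon_{ef})$, so that
\[
  \mu(\omega)\propto e^{(\omega,B\omega)}\,\pi(\omega),\qquad B_{ef}=\log(1+\epsilon_{ef}) ,
\]
i.e.\ $\mu$ is an Ising-type tilt of $\pi$ on the occupation variables $\omega\in\{0,1\}^N$; only the symmetric part of $B$ is felt, and it is this matrix $(\log(1+\epsilon_{ef}))_{e,f}$ that enters the spectral version, with $-2B$ playing the role of $\beta A$ and $\bar\epsilon\leq\bar\epsilon_0$ the role of the smallness condition in (SC). As in Theorem~\ref{thm:mlsi} I would interpolate in the interaction strength: put $\mu_t(\omega)\propto e^{t(\omega,B\omega)}\pi(\omega)$ for $t\in[0,1]$, so $\mu_0=\pi$ and $\mu_1=\mu$, and run the same entropy-dissipation differential inequality along $t$. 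This requires exactly (i)~a uniform modified log-Sobolev inequality for the down-up walk at $t=0$, i.e.\ for $\pi$, and (ii)~a uniform bound on the largest eigenvalue of the covariance matrix $(\cov_{\mu_t}(\omega_e;\omega_f))_{e,f}$, uniformly in $t\in[0,1]$ and in pinnings of the coordinates (the matroid analogue of ``uniformly in the external field $h$'' in condition~(CC)).

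Property~(i) is the only place matroid structure is used. The basis generating polynomial $\sum_{A\in\cB}\prod_{e\in A}x_e$ is log-concave on the positive orthant, so $\pi$ is the probability measure attached to a log-concave generating polynomial, and the local-to-global theory for such polynomials — in the form recalled in Section~\ref{sec:inftemp}, which produces the modified log-Sobolev inequality for the down-up walk — gives $\ent_\pi(F)\leq C\,D^{\mathrm{du}}_\pi(F,\log F)$ with a constant $C$ independent of the matroid. This is moreover stable under the operations (ii)~needs: pinning $\omega_e$ to $0$ or $1$ is deletion or contraction of $e$, which maps matroids to matroids, and a multiplicative tilt $\prod_e t_e^{\omega_e}$ is the substitution $x_e\mapsto t_ex_e$, which preserves log-concavity of the generating polynomial. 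Hence every pinning and tilt of $\pi$ again has a log-concave generating polynomial and is in particular spectrally independent with a constant independent of the matroid; this gives the $t=0$ instance of~(ii).

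To close the interpolation one must propagate~(ii) to $t\in(0,1]$. The quadratic tilt $e^{t(\omega,B\omega)}$ does not preserve log-concavity of the generating polynomial, so I would instead argue perturbatively: if $\bar\epsilon$ (equivalently the $\ell^\infty\!\to\ell^1$ norm of $B$) is below an absolute threshold $\bar\epsilon_0$, then the covariance bound for $\mu_t$ follows from that for $\pi$ by a continuity-in-$t$ / Neumann-series argument — this is the same mechanism by which (SC) $\Rightarrow$ (CC) is obtained within the proof of Theorem~\ref{thm:mlsi}. Feeding the resulting uniform bound into the Gr\"onwall step yields $\ent_\mu(F)\leq C(\bar\epsilon)\,D^{\mathrm{du}}_\mu(F,\log F)$ with $C(\bar\epsilon)$ of the form $C\exp[\int_0^1\bar\chi^0(t)\,dt]$, and since both the base modified log-Sobolev constant and the base spectral-independence constant are independent of the matroid, so is $\bar\epsilon_0$. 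The sharper spectral version is obtained in the same way, tracking the width of the spectrum of $(\log(1+\epsilon_{ef}))_{e,f}$ in the covariance estimate in place of $\bar\epsilon$, exactly as (SC) does for the canonical Ising model through $\delta(A)$.

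I expect the main obstacle to be step~(ii) away from $t=0$: once the quadratic tilt is switched on there is no log-concave generating polynomial to lean on, and for general signs of the $\epsilon_{ef}$ there is no correlation inequality playing the role of ferromagneticity, so the covariance of $\mu_t$ must be controlled by the perturbative continuation alone. This is why the statement is confined to small $\bar\epsilon$ (or small spectral width of $(\log(1+\epsilon_{ef}))_{e,f}$) rather than holding up to a genuine critical point, in contrast with the ferromagnetic covariance condition behind Theorem~\ref{thm:mlsi}.
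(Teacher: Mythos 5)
Your reduction to an Ising-type tilt with coupling $B_{ef}=\log(1+\epsilon_{ef})$ and smallness $\bar\epsilon\leq\bar\epsilon_0$, and your use of log-concavity of the basis generating polynomial (stable under external-field tilts $x_e\mapsto t_ex_e$) to obtain the down-up modified log-Sobolev inequality at the infinite-temperature end, are exactly the paper's starting points. But there are two genuine gaps. First, the step you yourself flag as the main obstacle --- the covariance bound for the interacting measures along the interpolation --- is left to an unspecified ``continuity-in-$t$ / Neumann-series'' argument, and you mischaracterise the mechanism you appeal to: in the proof of Theorem~\ref{thm:mlsi}, (SC) gives the covariance bound not perturbatively but through Lemma~\ref{lem:covbd}, which decomposes the interacting measure into the field-tilted base measure and the renormalised measure $\nu_0$, bounds the first piece by the infinite-temperature covariance bound \eqref{e:var-bd} and the second by Bakry--\'Emery for $\nu_0$, whose strict log-concavity (Lemma~\ref{lem:Vren}) again uses only \eqref{e:var-bd}. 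Since \eqref{e:var-bd} holds, with $4$ in place of $2$, for every tilt of the uniform matroid basis measure as a consequence of $\He \log g_\pi\leq 0$, Lemmas~\ref{lem:Vren} and \ref{lem:covbd} carry over verbatim; no perturbative continuation is needed, and it is not clear the one you propose would close, since the $t$-derivative of the covariance involves third-order (truncated) correlations that are not controlled by the covariance alone.

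Second, your claim that the argument ``requires exactly (i) and (ii)'' omits a third essential ingredient: after the entropy-contraction/Gr\"onwall step and the base MLSI you are left with $\E_{\nu_0}[D^{\mathrm{du}}_{\pi}(F,\log F)]$ for the randomly tilted base measures, not with $D^{\mathrm{du}}_\mu(F,\log F)$. Converting the former into the latter is Lemma~\ref{lemm_bound_DF_no_concavity-bis}, which is nontrivial because the down-up jump rates are not concave in the measure; in the matroid setting one must check (as is done in Section~\ref{sec:pfmatroid}) that this lemma uses only the Gaussian decomposition and the fact that $\pi$ is uniform on its support. Without this step your final inequality does not follow. The remaining elements of your sketch --- conversion to spin variables, $\beta=O(\bar\epsilon)$ small after adding a multiple of the identity, and the spectral variant phrased in terms of the matrix $(\log(1+\epsilon_{ef}))_{e,f}$ --- do match the paper's proof.
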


Tyler Helmuth pointed out that the following application of the above extension is of interest:
let $\pi$ be the uniform spanning tree measure on a graph so that $\omega_e=1$ if $e$ is an edge in the spanning tree and $0$ otherwise.
The above example gives a modified log-Sobolev inequality for weakly non-uniform spanning trees, e.g., 
measures on spanning trees where parallel edges are favoured.
}

\subsection{Proof overview}

The starting point for our results is the strategy of the proof of the Glauber log-Sobolev inequality under a spectral condition from \cite{MR3926125}
and much further developed in \cite{MR4303014,MR4705299,MR4408509,2106.04105,2203.04163}.
The set-up is recalled and adapted to the setting of conservative dynamics in Section~\ref{sec:main}.
For a general introduction, also see \cite{2307.07619}.

The constraint on the magnetisation makes the analysis much more subtle, though, and several new ingredients are required.
An important input is a (modified or unmodified) log-Sobolev inequality
for the infinite temperature case $\beta=0$, for which
we make use of the results for the down-up walk (or bases exchange walk) from \cite{MR4203344,2106.04105} which are based
on the deep log-concavity results of \cite{MR2476782,1811.01600,MR4172622}.
These are also recalled in Section~\ref{sec:main},
and a different presentation of the proof of the modified log-Sobolev inequality for the down-up
walk is included in Appendix~\ref{app:downup}.

A crucial new ingredient is a comparison estimate that connects the down-up walk and Kawasaki dynamics
through the measure decomposition of Section~\ref{sec:main}.
This estimate is given in Section~\ref{sec:downup}.

In Section~\ref{sec:mp}, we recall the moving particle lemma and apply it to the case of the random regular graph
using well-known estimates on the geometry of these graphs.

{\red All arguments are carried out for the canonical Ising model but we indicate along the way the changes required for the more general setting of Theorem~\ref{thm:matroid}.}

\subsection{Open questions}

Our result leaves several natural questions open which we summarise here.

1. Can one extend the fast mixing result for Kawasaki dynamics on random regular graphs from 
$\beta < 1/( 8 \sqrt{d-1})$ to $\beta<\beta_r$, for example by establishing the
covariance condition (CC)?

2. Can one show that Kawasaki dynamics on random regular graphs becomes slow for $\beta>\beta_r \red = \operatorname{artanh}(1/\sqrt{d-1}) \sim 1/\sqrt{d-1}$ (or at least for $\beta$ sufficiently large)
by choice of trial function in
the spectral gap inequality? See \cite{MR1705586} for such results on finite-dimensional lattices and
\cite{MR3825934,2203.05093} for hard sampling results for the related SK model.

3. Can one remove the logarithmic factor in the log-Sobolev inequality \eqref{e:lsi-cor} for suitable choice of jump rates depending on $h$ and $m$
which are equivalent to those of the standard Dirichlet form when $h$ is bounded and $m \in [-1+\epsilon,1-\epsilon]$?

4. Can one remove the logarithmic factor in the Dirichlet form comparison estimate for random regular graphs, Corollary~\ref{cor:mp-rrg}?
This would imply that Kawasaki dynamics has spectral gap of order~$1$ on random regular graphs (rather than with a logarithmic correction),
and therefore make this estimate suitable to deduce correlation decay \cite{MR1971582}.

{\red 5. For the Kawasaki dynamics on a $d$-dimensional cube of size $L$,
previous methods \cite{MR1233852,MR1414837,MR1757965,MR1914934} relying on 
mixing conditions provide a decay of order $L^{-2}$ for the spectral gap and the log-Sobolev constant.
Our approach, which uses spectral conditions instead, is less precise in the lattice case for the log-Sobolev constant
and it would be interesting to remove the logarithmic corrections in Corollary~\ref{cor:zd}.}

6. On a lattice $\Lambda\subset \Z^d$, 
can one show that the covariance condition (CC) gives a polynomial gap near $\beta_c$ when $d\geq 5$,
and in general that it applies up to $\beta < \beta_c$? Moreover, can one show that (CC) follows from spatial mixing assumptions?

\section{Proof \red of main theorems} 
\label{sec:main}

We {\red focus the presentation on the proof of Theorem~\ref{thm:mlsi} concerning the canonical Ising model.
The immediate generalisations for interacting measures on general bases of matroids leading to Theorem~\ref{thm:matroid} are
discussed subsequently in Section~\ref{sec:pfmatroid}.}

\subsection{Initial decomposition of the measure}\label{sec_def_nu_r_pi}

For $m \in [-1,1]$ such that $Nm$ is an integer, recall that
\begin{equation}
  \Omega_{N,m} = \{  \sigma \in \{\pm 1\}^N: \sum_{i=1}^N \sigma_i=Nm \},
\end{equation}
and also define the continuous analogue
\begin{equation}
  X_{N,0} = \{\varphi \in \R^N: \sum_{i=1}^N\varphi_i=0\}.
\end{equation}
The proof of Theorem~\ref{thm:mlsi} starts from a decomposition of the canonical Ising measure $\nu$ on $\Omega_{N,m}$ into two measures: 
an infinite temperature part $\pi$ with random external field on $\Omega_{N,m}$
and a renormalised measure $\nu_0$ on $X_{N,0}$ 
which determines the statistics of the external field.
This decomposition is analogous to the decompositions that we used in \cite{MR3926125,MR4705299}
for Glauber dynamics of Ising models,
which is also equivalent to the one implicit in \cite{2203.04163},
with the constraint on the magnetisation the only but crucial difference.
The subsequent additional difficulties and improvements (in the high temperature condition to use the
second smallest instead of the smallest eigenvalue) are due to this constraint.

To define this decomposition,
{\red recall that the matrix $A$ is assumed to have the constant eigenvector ${\bf 1}= (1,\dots, 1)$ and that it therefore preserves its orthogonal complement $X_{N,0}$.
Moreover, we}
may from now on assume that $A$ regarded as an operator $X_{N,0} \to X_{N,0}$ has spectrum contained in $(0,1)$.
Indeed, by replacing $A$ by $A+cP$ with $c > -\lambda_2$, where $P$ is the orthogonal projection onto $X_{N,0}$,
we may assume without loss of generality that $A$ is positive definite on $X_{N,0}$
(the shift by $cP$ does not affect the canonical Ising measure since $\sigma_i^2=1$ for all $i$),
and by rescaling $\beta$ we can further assume $|A|_{X_{N,0}}<1$ where $|A|_{X_{N,0}}$ is the operator norm of $A$ on $X_{N,0}$.
By taking a limit, all estimates extend to the case in which $A$ has spectrum in $[0,1]$ rather than $(0,1)$.

\newcommand{\alphab}{\beta}

{\red Generalising the decomposition of the canonical Ising measure into its infinite temperature version and a renormalised measure,
it is useful to consider its decomposition into a canonical Ising measure at inverse temperature $t \in [0,\beta)$
and a corresponding renormalised measure $\nu_t$.
To define this decomposition, for $t \in [0,\beta]$, set}
\begin{equation}
\label{eq: covariance decomposition}
  C_t = (tA + (\alphab-t)P)^{-1}
\end{equation}
where the inverse is taken on $X_{N,0}$ 
and note that $C_t$ is strictly increasing as a quadratic form in $t\in[0,\beta]$ {\red since $|A|_{X_{N,0}}<1$.}
The following Gaussian convolution identity then holds:
for any $\sigma\in \Omega_{N,m}$
and $t\in [0,\beta)$,
\begin{equation} \label{e:decomp-t}
  e^{-\frac{\beta}{2}(\sigma,A\sigma)}
  \propto
  e^{-\frac{1}{2}(\sigma,C_\beta^{-1}\sigma)}
  \propto
  \int_{X_{N,0}} e^{-\frac{1}{2} (\sigma-\varphi,C_t^{-1}(\sigma-\varphi))} e^{-\frac12 (\varphi, (C_\beta-C_t)^{-1} \varphi)}\, d\varphi,
\end{equation}
where all inverses are as operators on $X_{N,0}$ and are extended to act trivially orthogonal to it,
and $\propto$ denotes proportionality up to a constant independent of $\sigma$ but dependent on $t$, $\beta$, {\red and $m$.
Indeed, one can for example notice that all matrices can be diagonalised simultaneously, and that the integral 
then becomes a product of $N-1$ one-dimensional integrals.}
For $t=0$, in particular, 
\begin{equation} \label{e:decomp-0}
  e^{-\frac{\beta}{2}(\sigma,A\sigma)}
  \propto
  \int_{X_{N,0}} e^{-\frac{\alphab}{2} (\sigma-\varphi,\sigma-\varphi)} e^{-\frac12 (\varphi, C^{-1} \varphi)}\, d\varphi
\end{equation}
where $C=C_\beta-C_0 = (\beta A)^{-1}-\beta^{-1}P$. The canonical Ising measure then decomposes as
\begin{equation}
  \E_{\nu}[F] = \E_{\nu_0}[\E_{\pi_{N,m,h+\alphab\varphi}}[F]].
  \label{eq_decomp_Ising}
\end{equation}
Here $\pi_{N,m,h}$ denotes the infinite temperature canonical Ising model with external field $h \in \R^N$, i.e.,
the probability measure on $\Omega_{N,m}$ with probabilities
\begin{equation}  \label{e:pih-def}
  \pi_{N,m,h}(\sigma) \propto \prod_{i=1}^N e^{\sigma_i h_i}.
\end{equation}
The renormalised measure $\nu_0$ on $X_{N,0}$ has density proportional to
\begin{equation} \label{e:nu0-def}
\nu_0 (\varphi) \propto  e^{-\frac12(\varphi,C^{-1}\varphi)-V_0(\varphi)},
\end{equation}
where the renormalised potential $V_0$  is given by 
\begin{equation}
  V_0(\varphi) = - \log \sum_{\sigma\in\Omega_{N,m}} e^{-\frac{\alphab}{2}(\varphi-\sigma,\varphi-\sigma)} \pi_{N,m,h}(\sigma).
\end{equation}
Using \eqref{e:decomp-t} instead of \eqref{e:decomp-0}, the decomposition   \eqref{eq_decomp_Ising} generalises to
\begin{equation}
  \E_{\nu}[F] = \E_{\nu_t}[\E_{\mu_{t}^\varphi}[F]],
  \label{eq_decomp_Ising_t}
\end{equation}
where the renormalised measure $\nu_t$ on $X_{N,0}$ and the fluctuation measure $\mu_{t}^\varphi$ on $\Omega_{N,m}$, for $\varphi \in X_{N,0}$,
are defined analogously by
\begin{alignat}{2}
  \label{e:nut-def}
  \frac{d\nu_t}{d\varphi}(\varphi) &\propto e^{-\frac12 (\varphi, (C_\beta-C_t)^{-1}\varphi) - V_t(\varphi)}, &\qquad& (\varphi \in X_{N,0}),
  \\
  \label{e:mut-def}
  \mu_{t}^\varphi(\sigma) &  \propto e^{-\frac12 (\sigma-\varphi, C_t^{-1} (\sigma-\varphi))}\pi_{N,m,h}(\sigma)
                            \propto e^{-\frac12 (\sigma, C_t^{-1} \sigma)}\pi_{N,m,h+C_t^{-1}\varphi}(\sigma), &\qquad& (\sigma \in \Omega_{N,m}),
\end{alignat}
with $V_t(\varphi) = -\log \sum_{\sigma\in\Omega_{N,m}} e^{-\frac12 (\varphi-\sigma, C_t^{-1}(\varphi-\sigma))}\pi_{N,m,h}(\sigma)$ the renormalised potential.
Thus $\mu_0^\varphi= \pi_{N,m,\beta \varphi+h}$ and $\mu_t^\varphi$ is again a canonical Ising measure at inverse temperature $t$
with general field $C_t^{-1}\varphi +h$.

{\red
\begin{remark} \label{rk:decomp-matroid}
  The above holds without change if the canonical Ising measure is replaced by its generalisation
  in which $\Omega_{N,m}$ is replaced by a subset of $\Omega_{N,m}$.
  In particular, $\pi_{N,m,h}$ is then a probability measure on this subset.
\end{remark}
}

\subsection{Infinite temperature measure}
\label{sec:inftemp}

We first collect properties of the infinite temperature measure $\pi_{N,m,h}$.

\subsubsection{Covariance estimates}

{\red It is well known that the canonical measure $\pi_{N,m,h}$ (independent Bernoulli random variables conditioned on their sum)} satisfies the strong Rayleigh property \cite{MR2476782}.
In particular, it is negatively correlated for all $h\in \R^\Lambda$:
\begin{equation} \label{e:nc}
  \cov_{\pi_{N,m,h}}(\sigma_i,\sigma_j) \leq 0 \qquad (i \neq j),
\end{equation}
and hence its covariance matrix satisfies (using also $2|f_if_j| \leq f_i^2+f_j^2$ and $\sum_{j: j\neq i} \sigma_j = -\sigma_i +Nm$):
\begin{align}\label{e:var-bd}
  \var_{\pi_{N,m,h}}((f,\sigma))
  &= \sum_{i} f_i^2 \var_{\pi_{N,m,h}}(\sigma_i) + \sum_{i \neq j} f_if_j \cov_{\pi_{N,m,h}}(\sigma_i,\sigma_j)
  \nnb
  &\leq \sum_{i} f_i^2 \var_{\pi_{N,m,h}}(\sigma_i) + \sum_{i} f_i^2 \sum_{j: j \neq i} \cov_{\pi_{N,m,h}}(\sigma_i,-\sigma_j)
  \nnb
  &= 2 \sum_{i} f_i^2 \var_{\pi_{N,m,h}}(\sigma_i)
  \leq 2 |f|_2^2, 
  \quad \text{with $|f|_2^2 = \sum_i f_i^2$}.
\end{align}
A related (but different) consequence  of the strong Rayleigh property  {\red other than the negative correlation \eqref{e:nc}} is
the log-concavity of the generating polynomial
\begin{equation}
  \red z \in [0,\infty)^N \mapsto g_{\red \pi_{N,m,h}} (z) = \E_{\red \pi_{N,m,h}}  [z^I], \qquad \text{where } z^I = \prod_{i\in I}z_i,
  \label{eq_def_gpi}
\end{equation}
see  \cite{MR4332671,MR4172622}.
Note that the log-concavity property  is also established for much more general measures.
The log-concavity of $g_\pi(z)$ has been used to prove a modified log-Sobolev inequality for the so-called
\emph{down-up} or \emph{bases-exchange walk} \cite{2106.04105,MR4203344} which we define next.

\subsubsection{Down-up walk}

Another choice of jump rates we will use are those of
the \emph{down-up walk} (also called \emph{bases-exchange walk}) studied in \cite{MR4232133,2106.04105,MR4203344}.
For $\sigma\in\Omega_{N,m}$, let $I(\sigma)=\big\{i \in[N]:\sigma_i=1\big\}$ denote the set
of sites with $+$ spins (particles).
The down-up walk acts by removing a particle uniformly at random (down step) to obtain a distribution
with $k-1$ particles and then selects a $k$ particle distribution according to $\pi$ from those containing
the former distribution.
{\red Denote by $J_i(\sigma)$ the set of admissible sites where a particle can be added after removing one at site $i$.
  In particular, $J_i(\sigma)=I(\sigma)^c \cup \{i\}$ for the product measure conditioned on its sum.
}
In terms of spins,
{\red the jump rates of the down-up walk correspond to}:
\begin{equation} 
\label{e:downup}
{\red c^{\mathrm{du}}} (\sigma,i\to j) 
=
{\red c^{\mathrm{du}}_{\pi_{N,m,h}}} (\sigma,i\to j) 
=
{\bf 1}_{\sigma_i=1}{\bf 1}_{\sigma_j=-1}\frac{\pi(\sigma^{ij})}{\sum_{k\in \red J_i(\sigma)} \pi(\sigma^{ik})}.
\end{equation}
{\red The Dirichlet form \eqref{e:Dirichlet- general} corresponding to these rates will be denoted by $D^{\mathrm{du}}_\pi$.}

We emphasise that the rates of the down-up walk are asymmetric in $(i,j)$, i.e., ${\red c^{\mathrm{du}}} (\sigma,i\to j) \neq {\red c^{\mathrm{du}}} (\sigma,j\to i)$
  -- in fact if the left-hand side is nonzero the right-hand side is zero. 
  This should simply be understood as the fact that, if a particle ($+$ spin) and a hole ($-$ spin) exchange position, then we think of the particle as making the jump rather than the hole.
  Nonetheless the jump rates are reversible with respect to $\pi$
since $\pi(\sigma) {\red c^{\mathrm{du}}} (\sigma,i\to j) = \pi(\sigma^{ji}) {\red c^{\mathrm{du}}} (\sigma^{ji},j\to i)$ where on the right-hand side we wrote $\sigma^{ji}=\sigma^{ij}$
to emphasise that the plus particle of $\sigma^{ij}$ (among the sites $i,j$) is at $j$ if the one of $\sigma$ is at $i$. 
In the following the value of the jump rate will always be clear from the context, so we will use the more convenient notation:
\begin{equation}
{\red c^{\mathrm{du}}} (\sigma,\sigma^{ij}) 
:= 
{\red c^{\mathrm{du}}} (\sigma,i\to j)
.
\end{equation}

More precisely, {\red for the canonical Ising model,}
we use the down-up walk if $m\leq 0$ (i.e., the number of particles is at most $N/2$),
and instead the {\red up-down walk} for $m>0$ whose jump rates would be ${\red c^{\mathrm{ud}}}(\sigma,i\to j) = {\red c^{\mathrm{du}}_{\pi_{N,m,-h}}}(-\sigma,i\to j)$.
Note that the number $N-k+1$ of terms in the sum in the denominator of the jump rate is always of order $N$
(this is the reason for considering different jump rates depending on the sign of $m$).
In particular, if $C^{-1} \pi(\sigma) \leq \pi(\sigma^{ij})\leq C\pi(\sigma)$ for all $i,j\in [N]$,
then
\begin{equation} 
\label{e:equiv-downup-standard}
  {\red c^{\mathrm{du}}}(\sigma,\sigma^{ij}) \leq C^2  \frac{{\bf 1}_{\sigma_i=1}{\bf 1}_{\sigma_j=-1}}{N-k+1}
  \leq \frac{2C^2}{N} {\bf 1}_{\sigma_i=1}{\bf 1}_{\sigma_j=-1}.
\end{equation}
Thus under the condition $C^{-1}\pi(\sigma)\leq \pi(\sigma^{ij}) \leq C \pi(\sigma)$ the
Dirichlet forms of the down-up walk {\red $D^{\mathrm{du}}_\pi$} and the standard Dirichlet form 
{\red $D_\pi$} are equivalent,
but the jump rates become inequivalent for large fields.

The following modified log-Sobolev inequality for $\pi_{N,m,h}$
has been proven as a consequence of the log-concavity of its generating polynomial \cite{MR4203344,2106.04105}.
We have included an alternative presentation of the proof in Appendix~\ref{app:downup}.

\begin{theorem} \label{thm:hs-lsi}
  {\red Let the probability measure $\pi_{N,m,0}$ on $\Omega_{N,m}$ 
  have log-concave generating polynomial, see~\eqref{eq_def_gpi}.}
  Then for any $F: \Omega_{N,m}\to\R$, the measure $\pi=\pi_{N,m,h}$ satisfies
  the modified log-Sobolev inequality (uniformly in $N,m,h$):
  \begin{equation}
    \ent_{\pi}(F) \leq D^{\mathrm{du}}_\pi(F,\log F),
  \end{equation}
  {\red where $D^{\mathrm{du}}_\pi$ is the Dirichlet form \eqref{e:Dirichlet- general} for the down-up walk rates \eqref{e:downup}.}
\end{theorem}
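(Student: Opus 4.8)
The plan is to remove the external field first, reducing to an arbitrary homogeneous measure with log-concave generating polynomial, and then to run a local-to-global induction on the number $k$ of particles. For the reduction, identify $\sigma\in\Omega_{N,m}$ with its set $I(\sigma)$ of $+$ spins: then $\pi_{N,m,h}(\sigma)\propto\big(\prod_{i\in I(\sigma)}e^{2h_i}\big)\pi_{N,m,0}(\sigma)$, so the generating polynomial \eqref{eq_def_gpi} of $\pi_{N,m,h}$ is that of $\pi_{N,m,0}$ with each $z_i$ replaced by $e^{2h_i}z_i$, up to a positive constant. A positive diagonal rescaling of the variables conjugates the Hessian of $\log g$ by a positive diagonal matrix and therefore preserves log-concavity on $(0,\infty)^N$, so $\pi_{N,m,h}$ again has log-concave generating polynomial (the same applies in the matroid setting of Remark~\ref{rk:decomp-matroid}, with $\pi_{N,m,0}$ replaced by the uniform measure on the bases). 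It thus suffices to prove the inequality for an arbitrary $k$-homogeneous probability measure $\rho$ on subsets of $[N]$ with (necessarily multiaffine) generating polynomial $g_\rho$ log-concave on $(0,\infty)^N$, and then take $\rho=\pi_{N,m,h}$.

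Proceed by induction on $k$. For $k=1$ the down-up walk of $\rho$ is the ``resample from $\rho$'' chain on singletons, whose Dirichlet form is $\cov_\rho(F,\log F)$, and one has the elementary bound $\ent_\rho(F)\le\cov_\rho(F,\log F)$ for every probability measure, the difference being $\E_\rho[F]\big(\log\E_\rho[F]-\E_\rho[\log F]\big)\ge0$ by Jensen. For $k\ge2$, remove a uniformly random particle from $S\sim\rho$ to obtain a $(k-1)$-set $\tau$: its law is the down-projection $\rho^\downarrow$, whose generating polynomial is $\frac1k(\partial_1+\cdots+\partial_N)g_\rho$ and hence again log-concave, the log-concave (equivalently, Lorentzian) cone being stable under nonnegative directional derivatives. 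Writing $\nu_\tau$ for the conditional law of $S$ given $\tau$, a probability measure on the link singletons $\{j:\tau\cup\{j\}\in\operatorname{supp}\rho\}$, and $DF(\tau)=\E_{\nu_\tau}[F]$, the chain rule for entropy gives
\[
  \ent_\rho(F)=\E_{\tau\sim\rho^\downarrow}\big[\ent_{\nu_\tau}(F_\tau)\big]+\ent_{\rho^\downarrow}(DF),
\]
where $F_\tau$ is $F$ restricted to the link of $\tau$.

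A direct computation with \eqref{e:downup}, using that the rate from $\sigma$ to $\sigma^{ij}$ equals $\nu_\tau(j)$ with $\tau=I(\sigma)\setminus\{i\}$ and that $\rho(\tau\cup\{i\})=k\,\rho^\downarrow(\tau)\,\nu_\tau(i)$, gives the identity $D^{\mathrm{du}}_\rho(F,\log F)=k\,\E_{\tau\sim\rho^\downarrow}\big[\cov_{\nu_\tau}(F_\tau,\log F_\tau)\big]$. Combined with the $k=1$ bound applied to each $\nu_\tau$, the first term above is at most $\frac1k D^{\mathrm{du}}_\rho(F,\log F)$, while the inductive hypothesis applied to $\rho^\downarrow$ and $DF$ bounds the second by $D^{\mathrm{du}}_{\rho^\downarrow}(DF,\log DF)$. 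Since $\frac1k+\frac{k-1}{k}=1$, the induction closes once one has the contraction estimate
\[
  D^{\mathrm{du}}_{\rho^\downarrow}(DF,\log DF)\le\frac{k-1}{k}\,D^{\mathrm{du}}_\rho(F,\log F),
\]
which, via the same identity at level $k-1$, amounts to $\E_{\tau'}\big[\cov_{\nu'_{\tau'}}((DF)_{\tau'},\log(DF)_{\tau'})\big]\le\E_\tau\big[\cov_{\nu_\tau}(F_\tau,\log F_\tau)\big]$ (averages over $(k-2)$- and $(k-1)$-faces respectively): the mean local $(\cdot,\log\cdot)$-covariance does not increase on passing from a face to a subface.

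This contraction estimate is the only step using log-concavity beyond the trivial observations above, and it is the main obstacle: it is the entropic counterpart of the one-sided local spectral expansion of the simplicial complex attached to a log-concave distribution, and obtaining exactly the constant $\frac{k-1}{k}$ — equivalently the clean constant $1$ in the theorem — is where the work lies. It rests on the log-concavity results of \cite{MR2476782,1811.01600,MR4172622} together with the local-to-global arguments of \cite{MR4203344,2106.04105}; I would either reproduce that argument in a streamlined form or, for brevity, invoke \cite{MR4203344,2106.04105} directly once the first paragraph has checked that their hypotheses apply to $\pi_{N,m,h}$. Everything else is bookkeeping with the chain rule and the explicit rates \eqref{e:downup}.
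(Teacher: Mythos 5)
Your scaffolding is correct up to exactly the point you flag yourself: the removal of the field by diagonal rescaling, the chain rule $\ent_\rho(F)=\E_{\tau\sim\rho^\downarrow}[\ent_{\nu_\tau}(F_\tau)]+\ent_{\rho^\downarrow}(DF)$, the identity $D^{\mathrm{du}}_\rho(F,\log F)=k\,\E_{\tau\sim\rho^\downarrow}[\cov_{\nu_\tau}(F_\tau,\log F_\tau)]$ for the rates \eqref{e:downup}, the elementary $k=1$ bound, and the log-concavity of the generating polynomial of $\rho^\downarrow$ are all fine. The genuine gap is the contraction estimate $D^{\mathrm{du}}_{\rho^\downarrow}(DF,\log DF)\le\frac{k-1}{k}\,D^{\mathrm{du}}_{\rho}(F,\log F)$ on which your induction closes. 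This is not a statement proved in \cite{MR4203344,2106.04105}: what those works (and Propositions~\ref{prop_log_concave} and~\ref{prop_entropy_contraction} here) establish is contraction of \emph{relative entropy} under the down operator, $\mathbb H\big(\mu D_{k\to k-1}\,\big|\,\pi D_{k\to k-1}\big)\le\frac{k-1}{k}\,\mathbb H(\mu\,|\,\pi)$, equivalently $1$-entropic independence --- a contraction of the entropy functional in the measure argument, not of the $(F,\log F)$ covariance form under projection of the test function. The two are genuinely different: writing $\cov_\mu(G,\log G)=\E_\mu[G]\big(\mathbb H(\mu^G|\mu)+\mathbb H(\mu|\mu^G)\big)$ with $\mu^G=G\mu/\E_\mu[G]$, your estimate (in the equivalent form $\E_{\tau'}[\cov_{\nu'_{\tau'}}((DF)_{\tau'},\log (DF)_{\tau'})]\le\E_{\tau}[\cov_{\nu_\tau}(F_\tau,\log F_\tau)]$) demands a one-step contraction of averaged local \emph{symmetrised} Kullback--Leibler divergences; the forward halves are of entropic-independence type, but the reverse-KL halves (log-concave measure in the first slot) are not controlled by anything in those references, and the naive chain-rule bookkeeping does not close because the local reverse terms come weighted by $DF(\tau)$ rather than $\E_\rho[F]$. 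So ``reproduce that argument in streamlined form'' does not apply --- there is no such argument there to reproduce --- and the fallback of invoking \cite{MR4203344,2106.04105} wholesale is a citation of the theorem, not a proof of it (and would in any case require noting that the rates \eqref{e:downup} are those of the references multiplied by the number of particles, which is what turns their constant $1/k$ into the constant $1$ asserted here).

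The input that log-concavity actually provides, and which repairs your argument with no further work, is the iterated entropy contraction $\ent_\rho(F)\le k\,\E_{\tau\sim\rho^\downarrow}[\ent_{\nu_\tau}(F_\tau)]$, i.e.\ Proposition~\ref{prop_entropy_contraction} with $p=1$: combined with your $k=1$ bound $\ent_{\nu_\tau}(F_\tau)\le\cov_{\nu_\tau}(F_\tau,\log F_\tau)$ and your identity, it gives $\ent_\rho(F)\le D^{\mathrm{du}}_\rho(F,\log F)$ in one line, with no Dirichlet-form contraction between levels needed. This is essentially the route of Appendix~\ref{app:downup}, which proves entropic independence (Proposition~\ref{prop_log_concave}), iterates it (Proposition~\ref{prop_entropy_contraction}, stopping for expository reasons at two free particles), proves the two-particle modified log-Sobolev inequality (Lemma~\ref{lem:app-two-particle}) via the factorisation $\tilde P=D_{k\to k-1}U_{k-1\to k}$, and reassembles the global Dirichlet form using the invariance of the down-up rates under conditioning, \eqref{eq_conditioned_jp_rates}. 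In all of these the log-concavity enters as an entropy contraction; your proposal as written leaves precisely that step unproved.
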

\begin{remark}
The choice~\eqref{e:downup} of normalisation of the jump rate differs from the one in~\cite{MR4203344,2106.04105}, in which~\eqref{e:downup} is further divided by the number $m$ of particles. 
We prefer~\eqref{e:downup} as it corresponds to a $+$ spin typically getting updated once per unit time.
\end{remark}

\begin{remark} \label{rk:HS}
We remark that we use the down-up walk as an ingredient to obtain an optimal constant in our
modified log-Sobolev inequality for the canonical Ising model.
For a nonoptimal rate, one could use the 
log-Sobolev inequalities for $\pi_{N,m,h}$ obtained in \cite[Theorem~3]{MR4564432}
as a consequence of the strong Rayleigh property satisfied by $\pi_{N,m,h}$.
Note that here the lower bound on the log-Sobolev (or modified log-Sobolev) constant is $1/N$
instead of $1$:
For any $F: \Omega_{N,m}\to\R$, the measure $\pi=\pi_{N,m,h}$ satisfies
\begin{equation}
  \ent_{\pi}(F) \leq 2N D_\pi(\sqrt{F}), 
\end{equation}
where the Dirichlet form is the standard Dirichlet form.
The use of this inequality would simplify  the  arguments and yield results without
assumptions on $\max_i \sum_{j\neq i}|A_{ij}|$ or $\max_i |h_i|$ but without
the optimal rate obtained by using the down-up walk.
To recover the optimal rate one could imagine using instead \cite[Theorem~2]{MR4564432}.
However, the rates there are not explicit (though perhaps could be made explicit),
while we need explicit rates for a comparison estimate to recover the original measure.
\end{remark}

\subsection{Proof of Theorem~\ref{thm:mlsi}}\label{sec:thm_mlsi}

Next we collect properties of the renormalised measure $\nu_t$ and the fluctuation measure $\mu_{t}^\varphi$
defined in \eqref{e:nut-def} respectively \eqref{e:mut-def}.
The following convexity is a key observation for the proof of Theorem~\ref{thm:mlsi}.
{\red The proofs of Lemmas~\ref{lem:Vren} and \ref{lem:covbd} only use the covariance bound \eqref{e:var-bd} and not the product structure of the measure directly.}

\begin{lemma} \label{lem:Vren}
For $\alphab>0$ and $\varphi \in X_{N,0}$, the renormalised potential
\begin{equation}
  V_0(\varphi) = - \log \sum_{\sigma\in\Omega_{N,m}} e^{-\frac{\alphab}{2}(\varphi-\sigma,\varphi-\sigma)} \pi_{N,m,h}(\sigma)
  + \mathrm{constant}
\end{equation}
has Hessian on $X_{N,0}$ bounded below by $\alphab-2\alphab^2$.
\end{lemma}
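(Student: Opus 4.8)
The plan is to compute the Hessian of $V_0$ directly and bound its quadratic form from below on $X_{N,0}$. Write $V_0(\varphi) = -\log Z(\varphi)$ with $Z(\varphi) = \sum_{\sigma\in\Omega_{N,m}} e^{-\frac{\beta}{2}|\varphi-\sigma|^2}\pi_{N,m,h}(\sigma)$. For fixed $\varphi$, introduce the tilted probability measure $\mu^\varphi(\sigma) \propto e^{-\frac{\beta}{2}|\varphi-\sigma|^2}\pi_{N,m,h}(\sigma)$ on $\Omega_{N,m}$; this is exactly $\mu_0^\varphi = \pi_{N,m,\beta\varphi + h}$ (up to the $\varphi$-independent factor $e^{-\frac\beta2|\sigma|^2} = e^{-\frac\beta2 N}$). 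A standard computation gives, for $u \in X_{N,0}$,
\begin{equation}
  \partial_u^2 V_0(\varphi) = \beta |u|_2^2 - \beta^2 \var_{\mu^\varphi}((u,\sigma)),
\end{equation}
where the first term comes from differentiating the quadratic $\frac\beta2|\varphi-\sigma|^2$ twice and the second is the usual $-\partial^2\log$ of a log-partition-function producing minus the variance under the tilted (Gibbs) measure, with the factor $\beta^2$ from the chain rule (the linear term $\beta(\varphi-\sigma)$ appears squared).

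The key input is then the covariance bound: since $\mu^\varphi = \pi_{N,m,\beta\varphi+h}$ is an infinite-temperature canonical measure, the estimate \eqref{e:var-bd} applies uniformly in the field, giving $\var_{\mu^\varphi}((u,\sigma)) \leq 2|u|_2^2$ for all $u$ (in particular for $u \in X_{N,0}$). Substituting,
\begin{equation}
  \partial_u^2 V_0(\varphi) \geq \beta|u|_2^2 - 2\beta^2|u|_2^2 = (\beta - 2\beta^2)|u|_2^2,
\end{equation}
which is the claimed lower bound $\beta - 2\beta^2$ on the Hessian of $V_0$ restricted to $X_{N,0}$. (As flagged in the remark preceding the lemma, only \eqref{e:var-bd} is used, not the product structure, so the same argument works verbatim when $\Omega_{N,m}$ is replaced by a subset as in Remark~\ref{rk:decomp-matroid}, provided the covariance bound still holds.)

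I do not expect a serious obstacle here: the only points requiring a little care are (i) keeping track of the factors of $\beta$ versus $\beta^2$ when differentiating, and (ii) making sure the $\varphi$-independent constants ($e^{-\frac\beta2 N}$ and the normalisation of $\pi_{N,m,h}$) are absorbed into the additive constant so that the tilted measure appearing in the second derivative is genuinely $\pi_{N,m,\beta\varphi+h}$, to which \eqref{e:var-bd} applies. One should also note the restriction to directions $u \in X_{N,0}$ is harmless: $\var_{\mu^\varphi}((u,\sigma))$ is bounded by $2|u|_2^2$ for every $u$, and we only need the inequality for $u$ tangent to $X_{N,0}$ anyway. The mild negativity of the bound when $\beta > 1/2$ is expected and is compensated later by the Gaussian part $(C_\beta - C_t)^{-1}$ of $\nu_t$ in \eqref{e:nut-def}.
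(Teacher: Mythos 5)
Your proof is correct and follows exactly the paper's argument: compute the Hessian of $V_0$ as $\beta|u|_2^2 - \beta^2\var_{\pi_{N,m,h+\beta\varphi}}((u,\sigma))$ and bound the variance by $2|u|_2^2$ using the negative-correlation estimate \eqref{e:var-bd}. The extra bookkeeping you include (absorbing $\varphi$-independent factors, identifying the tilted measure as $\pi_{N,m,\beta\varphi+h}$) is exactly what the paper leaves implicit.
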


\begin{proof}
For $f: [N]\to \R$ with $\sum_i f_i =0$, the Hessian of $V_0$ is:
\begin{equation}
  \big(f, \He V_0(\varphi) f\big) = \alphab|f|_2^2 - \alphab^2\var_{\pi_{N,m,h+\alphab\varphi}}((f,\sigma)).
\end{equation}
Thus \eqref{e:var-bd} implies $\He V_0(\varphi) \geq \beta -2 \beta^2$ as claimed.
\end{proof}

\begin{lemma} \label{lem:covbd}
  Assume $|A|_{X_{N,0}}<1$ and let $\beta< 1/2$. Then the canonical Ising  {\red measure $\nu_{\beta,h}$} with inverse temperature $\beta$
  and external field $h\in \R^N$ satisfies
  \begin{equation}
    \var_{\nu_{\beta,h}}((f,\sigma)) \leq (\tfrac12-\beta)^{-1}|f|_2.
  \end{equation}
  In particular, $\cov(\mu_{t}^\varphi) \leq (\frac12-t)^{-1}$.
\end{lemma}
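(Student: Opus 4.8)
The plan is to reuse the measure decomposition from Section~\ref{sec_def_nu_r_pi} at $t=0$, namely the identity $\E_{\nu_{\beta,h}}[F] = \E_{\nu_0}[\E_{\pi_{N,m,h+\beta\varphi}}[F]]$, together with the convexity estimate for the renormalised potential from Lemma~\ref{lem:Vren}. The key point is that under this decomposition the linear statistic $(f,\sigma)$ has a conditional variance (inside $\pi_{N,m,h+\beta\varphi}$) that is controlled by \eqref{e:var-bd}, while its conditional expectation is a function of $\varphi$ whose variance under $\nu_0$ can be bounded via a Brascamp--Lieb-type inequality, using that $\nu_0$ has density $\propto e^{-\frac12(\varphi,C^{-1}\varphi)-V_0(\varphi)}$ with $C^{-1}\geq \beta P$ (since $C^{-1}=\beta A\geq 0$ on $X_{N,0}$ actually $C=(\beta A)^{-1}-\beta^{-1}P$, so $C^{-1}+\beta P = \beta A$, giving $C^{-1}\geq -\beta P$ is the wrong sign — more carefully one uses $(C^{-1}+\He V_0)\geq \beta A - \beta P + (\beta-2\beta^2)P$; the relevant lower bound on the full Hessian of $-\log\nu_0$ needs to be assembled).

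Concretely, I would proceed as follows. First, write the law-of-total-variance decomposition
\begin{equation}
  \var_{\nu_{\beta,h}}((f,\sigma)) = \E_{\nu_0}\big[\var_{\pi_{N,m,h+\beta\varphi}}((f,\sigma))\big] + \var_{\nu_0}\big(\E_{\pi_{N,m,h+\beta\varphi}}[(f,\sigma)]\big).
\end{equation}
The first term is $\leq 2|f|_2^2$ by \eqref{e:var-bd}, uniformly in $\varphi$. For the second term, observe that $\psi(\varphi) := \E_{\pi_{N,m,h+\beta\varphi}}[(f,\sigma)]$ has gradient $\nabla\psi(\varphi) = \beta\,\mathrm{Cov}_{\pi_{N,m,h+\beta\varphi}}((f,\sigma);\sigma)$, whose Euclidean norm is $\leq \beta\sqrt{\var(\cdot)}\cdot\sqrt{\sum_i \var(\sigma_i)}$ and in fact $|\nabla\psi|_2 \le 2\beta|f|_2$ again by \eqref{e:var-bd} applied twice. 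Then apply the Brascamp--Lieb inequality to $\nu_0$: since $-\log\nu_0$ has Hessian $C^{-1} + \He V_0 \geq \beta A$ on $X_{N,0}$ (combining $C^{-1} = \beta A - \beta P$, wait — this is exactly where the bookkeeping of the shift matters) and $\beta A \geq \beta\lambda_2 P$; but we have normalised $A$ so that its spectrum on $X_{N,0}$ lies in $(0,1)$, so instead one gets a lower bound proportional to something like $(\beta - 2\beta^2)$ via Lemma~\ref{lem:Vren} directly (the $C^{-1}$ term is nonnegative on $X_{N,0}$ after the normalisation, hence can be dropped). This yields $\var_{\nu_0}(\psi) \leq \frac{1}{\beta-2\beta^2}|\nabla\psi|_2^2 \leq \frac{4\beta^2}{\beta-2\beta^2}|f|_2^2 = \frac{4\beta}{1-2\beta}|f|_2^2$.

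Summing, $\var_{\nu_{\beta,h}}((f,\sigma)) \leq 2|f|_2^2 + \frac{4\beta}{1-2\beta}|f|_2^2$; this is of the order $(\tfrac12-\beta)^{-1}|f|_2^2$ for $\beta<\tfrac12$, though the precise constant $(\tfrac12-\beta)^{-1}$ claimed in the statement will require a sharper organisation of the estimate — likely one should not split so crudely but rather run the whole computation through the quadratic form directly, exploiting the identity $\var_{\nu_{\beta,h}}((f,\sigma)) = \beta^{-1}(f,f) - \beta^{-2}(f, (\He V_0 \text{-type object inverse}) f)$ analogous to the Hessian formula in the proof of Lemma~\ref{lem:Vren}, and then using the operator lower bound to invert. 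The statement for $\mu_t^\varphi$ then follows immediately: by \eqref{e:mut-def}, $\mu_t^\varphi$ is itself a canonical Ising measure at inverse temperature $t<\tfrac12$ with external field $C_t^{-1}\varphi+h$, so the first part applies verbatim and gives $\cov(\mu_t^\varphi)\leq(\tfrac12-t)^{-1}$.

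The main obstacle I anticipate is getting the \emph{sharp} constant $(\tfrac12-\beta)^{-1}$ rather than a constant of the same order: the naive total-variance split loses a factor, so one likely needs to recognise $\var_{\nu_{\beta,h}}((f,\sigma))$ as exactly computable in terms of a single resolvent (mirroring the exact Hessian formula $(f,\He V_0 f) = \beta|f|_2^2 - \beta^2\var_{\pi}((f,\sigma))$) and then apply the operator inequality $C^{-1}+\He V_0 \geq (\tfrac12-\beta)\cdot(\text{something})$ in one shot. A secondary, purely bookkeeping, difficulty is tracking the effect of the earlier normalisation (replacing $A$ by $A+cP$ and rescaling $\beta$) on the claimed constant; one should check the inequality is stated in, and proved in, the same normalisation where $|A|_{X_{N,0}}<1$.
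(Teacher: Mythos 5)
Your proposal follows essentially the same route as the paper's proof: the law-of-total-variance decomposition under \eqref{eq_decomp_Ising}, the bound $2|f|_2^2$ from \eqref{e:var-bd} for the conditional variance, and a Bakry--\'Emery/Brascamp--Lieb bound for $\nu_0$ with convexity constant $\beta-2\beta^2$ from Lemma~\ref{lem:Vren} applied to $\psi(\varphi)=\E_{\pi_{N,m,h+\beta\varphi}}[(f,\sigma)]$, whose gradient satisfies $|\nabla_\varphi\psi|_2\leq 2\beta|f|_2$ by \eqref{e:var-bd} again (the $C^{-1}$ part of the Hessian is indeed nonnegative after the normalisation and can be dropped). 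Your anticipated obstacle about the sharp constant is not actually there: $2+\tfrac{4\beta^2}{\beta-2\beta^2}=\tfrac{2\beta}{\beta-2\beta^2}=(\tfrac12-\beta)^{-1}$, so the ``crude'' split already gives exactly the claimed bound, and the statement for $\mu_t^\varphi$ follows verbatim as you say, since \eqref{e:mut-def} is a canonical Ising measure at inverse temperature $t$.
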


\begin{proof}
  Using the decomposition \eqref{eq_decomp_Ising}, with $\nu=\nu_{\beta,h}$ and $\pi=\pi_{N,m,\red h+ \beta \varphi}$,
  \begin{align}
    \var_{\nu}((f,\sigma))
    &= \E_{\nu_0}[\var_\pi((f,\sigma))] + \var_{\nu_0}(\E_{\pi}(f,\sigma))
      \nnb
    &\leq
      2|f|_2^2 + \frac{\alphab^2}{\alphab-2\alphab^2}
      \E_{\nu_0} \Big[|\nabla_h \E_\pi[(f,\sigma)]|_2^2 \Big]
  \end{align}
  where the bound of the first term is \eqref{e:var-bd} and that of the second term
  the Bakry--\'Emery criterion for the spectral gap of $\nu_0$ which is strictly log-concave for $\alphab<1/2$ by  Lemma~\ref{lem:Vren}.
  {\red We also used above that $\nabla_\varphi \E_\pi [(f,\sigma)] = \beta \nabla_h \E_\pi[(f,\sigma)]$.}   
  The second term can be simplified by again using  \eqref{e:var-bd}:
  \begin{align}
    |\nabla_h \E_{\pi}[(f,\sigma)]|_2^2=
    \sup_{|g|_2\leq 1}(g,\nabla_h \E_{\pi}[(f,\sigma)])^2
    =\sup_{|g|_2\leq 1} \cov_{\pi}((f,\sigma),(g,\sigma))^2 \leq 4|f|_2^2.
  \end{align}
  This gives the claim:
  \begin{equation}
    \var_{\nu}((f,\sigma)) \leq
    \pa{2+\frac{4\alphab^2}{\alphab-2\alphab^2}} |f|_2^2
    =
    \pa{\frac{2\alphab}{\alphab-2\alphab^2}} |f|_2^2.
    \qedhere
  \end{equation}
 {\red
The bound on the covariance matrix of $\mu_{t}^\varphi$  follows. Indeed, by the choice \eqref{eq: covariance decomposition} of the covariance $C_t$, the fluctuation measure \eqref{e:mut-def} 
 is a canonical Ising measure with parameter $t$ instead of $\beta$.}
 \end{proof}

The following key estimate
connects the Dirichlet forms of the down-up walk of the {\red infinite temperature measure} 
with that of the {\red finite temperature} canonical Ising model via the decomposition    \eqref{eq_decomp_Ising}.
Its proof is postponed to Section~\ref{sec:downup}.
{\red The proof actually only uses the decomposition \eqref{eq_decomp_Ising} 
and that the measure $\pi_{N,m,0}$ on $\Omega_{N,m}$ is uniform on its support and not the product structure directly.}

\begin{lemma}\label{lemm_bound_DF_no_concavity-bis}
{\red Assume $|A|_{X_{N,0}}< 1$.}
Let $D^{\mathrm{du}}_\pi = D^{\mathrm{du}}_{\pi_{N,m,\red h+\alphab\varphi}}$ be the Dirichlet form with the jump rates of the down-up walk \eqref{e:downup}
associated with the measure $\pi= \pi_{N,m,h+\beta\varphi}$,
and likewise for $D^{\mathrm{du}}_\nu$ {\red with $\nu = \nu_{\beta, h}$ the canonical Ising measure}.
There is $K(\beta)>0$ independent of $N,m,h$ and the test function $F:\Omega_{N,m}\rightarrow\R_+$ such that:
\begin{equation} \label{e:bound_DF_no_concavity}
\E_{\nu_0} [D^{\mathrm{du}}_\pi(F,\log F)]
\leq 
K(\beta) D^{\mathrm{du}}_{\nu}(F,\log F)
.
\end{equation}
The same bound also holds with $D^{\mathrm{du}}_\pi(F),D^{\mathrm{du}}_\nu(F)$ replacing $D^{\mathrm{du}}_{\pi}(F,\log F),D^{\mathrm{du}}_{\nu}(F,\log F)$.
\end{lemma}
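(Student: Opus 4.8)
The plan is to compare, configuration by configuration, the down-up jump rate $c^{\mathrm{du}}_{\pi}$ of the infinite-temperature measure $\pi = \pi_{N,m,h+\beta\varphi}$ with the down-up jump rate $c^{\mathrm{du}}_{\nu}$ of the canonical Ising measure $\nu = \nu_{\beta,h}$, and then integrate the resulting pointwise bound over the renormalised measure $\nu_0$. Recall from \eqref{e:downup} that both rates have the form $\mathbf 1_{\sigma_i=1}\mathbf 1_{\sigma_j=-1}\,\mu(\sigma^{ij})/\sum_{k\in J_i(\sigma)}\mu(\sigma^{ik})$, with $\mu = \pi$ or $\mu = \nu$. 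The numerator $\mu(\sigma^{ij})/\mu(\sigma)$ and the ratios in the denominator are controlled, in both cases, by $\exp$ of a quadratic form in $\sigma$: for $\pi$ the exponent only involves the external field $h+\beta\varphi$ (since $\pi$ is a conditioned product measure, equivalently uniform up to a field), while for $\nu$ it involves in addition the term $-\frac\beta2(\sigma,A\sigma)$. Because $\sigma_i^2=1$, changing two spins $\sigma_i,\sigma_j$ only changes $(\sigma,A\sigma)$ by an amount bounded by $4(|A_{ij}| + \sum_{k}|A_{ik}| + \sum_k|A_{jk}|)$, hence by a constant depending only on $\max_i\sum_{j\neq i}|A_{ij}| \le \bar A$; similarly for the field, a swap costs at most $4\bar h$ in the exponent of $\pi$. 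So the key inequality to establish is: there is $c_0(\beta,\bar A,\bar h)>0$ with
\begin{equation}
  \nu(\sigma)\, c^{\mathrm{du}}_{\nu}(\sigma,\sigma^{ij}) \geq c_0(\beta,\bar A,\bar h)\ \pi(\sigma)\, c^{\mathrm{du}}_{\pi}(\sigma,\sigma^{ij})\ \text{weighted by } \nu_0,
\end{equation}
in a form that reconstitutes the left side of \eqref{e:bound_DF_no_concavity} after summing over $i,j$ and using $\E_\nu[\cdot] = \E_{\nu_0}[\E_\pi[\cdot]]$ from \eqref{eq_decomp_Ising}. Concretely, one writes $\E_{\nu_0}[D^{\mathrm{du}}_\pi(F,\log F)] = \frac12\sum_{i,j}\E_{\nu_0}\E_{\pi}[c^{\mathrm{du}}_\pi(\sigma,\sigma^{ij})(F(\sigma^{ij})-F(\sigma))(\log F(\sigma^{ij})-\log F(\sigma))]$, recognises $\E_{\nu_0}\E_\pi = \E_\nu$, and bounds the integrand by $K(\beta)$ times the corresponding integrand for $c^{\mathrm{du}}_\nu$ using the pointwise rate comparison, noting that $(F(\sigma^{ij})-F(\sigma))(\log F(\sigma^{ij})-\log F(\sigma)) \geq 0$ always, so the comparison of rates is all that is needed (and the same argument, using $(F(\sigma^{ij})-F(\sigma))^2\ge0$, handles the variant with $D^{\mathrm{du}}_\pi(F),D^{\mathrm{du}}_\nu(F)$).

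The nontrivial input, flagged in the lemma's statement, is that the proof uses only that $\pi_{N,m,0}$ is uniform on its support, not the product structure. This matters because the denominator $\sum_{k\in J_i(\sigma)}\pi(\sigma^{ik})$ of the down-up rate genuinely depends on the structure of the support: $J_i(\sigma)$ is the set of admissible add-back sites. For the canonical Ising model $J_i(\sigma) = I(\sigma)^c\cup\{i\}$ has $N-Nm/2 - \ldots$ elements, of order $N$; the ratio of the $\pi$-denominator to the $\nu$-denominator is, after the spin-swap cost bounds above, pinched between two constants depending only on $\beta,\bar A,\bar h$ (here one uses $|A|_{X_{N,0}}<1$, which after the rescaling in Section~\ref{sec_def_nu_r_pi} is what lets $\beta\bar A$ enter the constants rather than $\beta$ times the operator norm). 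So the denominators contribute a bounded multiplicative factor and the numerators contribute another, giving $c^{\mathrm{du}}_\nu(\sigma,\sigma^{ij}) \asymp_{\beta,\bar A,\bar h} c^{\mathrm{du}}_\pi(\sigma,\sigma^{ij})$ but with $\pi$ evaluated with field $h+\beta\varphi$ — precisely the field that appears in the decomposition, so no $\varphi$-dependence leaks into the constant.

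The main obstacle I expect is handling the $\varphi$-integration cleanly: after the pointwise comparison one has $\E_{\nu_0}[\,\cdot\,]$ of a quantity still involving $c^{\mathrm{du}}_\nu$ evaluated for the $\nu$-measure (which is $\varphi$-independent) times $F$-increments (also $\varphi$-independent), so in fact the $\nu_0$-expectation trivialises once the comparison is pointwise in $\sigma$ with a $\varphi$-uniform constant; the real care is in checking that the spin-swap cost bounds for $-\frac\beta2(\sigma,A\sigma)$ and for the field $C_t^{-1}\varphi + h$ (at $t=0$, the field $\beta\varphi+h$) are uniform in $\varphi$ — which they are, because swapping two spins changes a linear-in-$\sigma$ term $(h+\beta\varphi,\sigma)$ by $2(\sigma_j-\sigma_i)(h_i+\beta\varphi_i) + \ldots$, and this does depend on $\varphi$, so one must instead bound the \emph{ratio} of numerator to denominator where the $\varphi$-linear contributions partially cancel, or absorb them into the $\nu$-side where the same field appears via $\E_\nu = \E_{\nu_0}\E_\pi$. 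Getting this cancellation right, rather than producing a constant that degrades as $|\varphi|$ grows, is the crux; it is exactly here that writing everything in terms of ratios $\mu(\sigma^{ij})/\mu(\sigma)$ and exploiting that the \emph{same} conditioned-product reference field sits on both sides is essential.
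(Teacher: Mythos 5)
Your overall reduction (expand $\E_{\nu_0}[D^{\mathrm{du}}_\pi(F,\log F)]$, use $\E_{\nu_0}\E_\pi=\E_\nu$ from \eqref{eq_decomp_Ising}, note the increments are nonnegative and $\varphi$-independent, and reduce to comparing rates for fixed $\sigma,i,j$) matches the paper's skeleton, but the step you rely on to finish — a comparison $c^{\mathrm{du}}_\pi(\sigma,\sigma^{ij})\leq K\,c^{\mathrm{du}}_\nu(\sigma,\sigma^{ij})$ \emph{pointwise in $\varphi$} with a $\varphi$-uniform, $N$-independent constant — is false, and you do not supply a mechanism to replace it. Writing out \eqref{e:downup} for $\pi=\pi_{N,m,h+\beta\varphi}$ gives $c^{\mathrm{du}}_\pi(\sigma,\sigma^{ij})=e^{2(\beta\varphi_j+h_j)}/\sum_{k\in J_i(\sigma)}e^{2(\beta\varphi_k+h_k)}$. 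If $\beta\varphi_j$ is much larger than $\beta\varphi_k$ for the other admissible $k$ (such $\varphi\in X_{N,0}$ exist and have positive $\nu_0$-probability), this rate is close to $1$, while $c^{\mathrm{du}}_\nu(\sigma,\sigma^{ij})$ is $\varphi$-independent and, already in the benign case $\beta\bar A,\bar h=O(1)$, of order $1/N$ because its denominator has $|J_i(\sigma)|\sim N$ terms of comparable size. So the pointwise ratio is of order $N$ on part of the $\varphi$-space: the numerator/denominator cancellation you hope for does not occur pointwise, only after averaging in $\varphi$. Note also that the lemma requires $K=K(\beta)$ independent of $h$ (and the paper's proof gives a constant independent of $\bar A,\bar h$ as well, which is exactly what makes the down-up version of Theorem~\ref{thm:mlsi} free of $\bar A,\bar h$), whereas your proposed constant degrades with $\bar A,\bar h$.

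The missing idea is to integrate over $\varphi$ \emph{before} estimating, i.e., to prove the averaged inequality $\E_{\nu_0}[\pi(\sigma)c^{\mathrm{du}}_\pi(\sigma,\sigma^{ij})]\leq K\,\nu(\sigma)c^{\mathrm{du}}_\nu(\sigma,\sigma^{ij})$ (the paper's \eqref{eq_bound_jump_rates_without_concavity}) by an exact Gaussian computation: as a function of $\varphi$, the weight $\nu_0(\varphi)\pi(\sigma)$ is Gaussian on $X_{N,0}$ with covariance $(C^{-1}+\beta P)^{-1}=\beta^{-1}(P-A)$ and mean $a^\sigma=(P-A)\sigma$, so the left-hand side equals $\nu(\sigma)\,\mathbf{E}_\gamma\big[e^{2(\beta\varphi_j+h_j)}/\sum_{k\in J_i(\sigma)}e^{2(\beta\varphi_k+h_k)}\big]$. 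One then divides and multiplies by $c^{\mathrm{du}}_\nu(\sigma,\sigma^{ij})$, applies Jensen's inequality for $x\mapsto 1/x$ with respect to the probability measure on $J_i(\sigma)$ proportional to $e^{2h_k+c_k}$, $c_k=\frac{\beta}{2}\nabla_{ij}(\sigma,A\sigma)-\frac{\beta}{2}\nabla_{ik}(\sigma,A\sigma)$, and computes the resulting Gaussian exponential moments. The exponents $2\beta(a^\sigma_j-a^\sigma_k)+c_k$ plus the variance contribution $2\beta(\delta_j-\delta_k,(P-A)(\delta_j-\delta_k))$ cancel \emph{exactly} for $k=i$ and are bounded by $8\beta$ for $k\neq i$ using $0<A<P$ on $X_{N,0}$, yielding $K\leq e^{8\beta}$ uniformly in $N,m,h,\sigma,i,j$. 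This algebraic cancellation between the Gaussian tilt $a^\sigma=\sigma-A\sigma$ and the energy differences $c_k$ is the substance of the proof and is absent from your proposal; without it (or some substitute) the argument does not close.
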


\begin{proof}[Proof of Theorem~\ref{thm:mlsi}]
  We first apply the entropic stability estimate from \cite[Proposition 39]{2203.04163}. 
  {\red There the estimate is not formulated for measures with constrained magnetisation, but it is straightforward to
  extend the estimate to this setting as was explicitly done in \cite[Section 3.7]{2307.07619}. The notations below are the same as the latter reference. 
  Writing $\dot C_t$ for the derivative of $C_t$ with respect to $t$,     
  the entropic stability estimate states that (recall  $\mu_t^\varphi$  from \eqref{e:mut-def})}
  \begin{equation} 
  \label{e:entstab}
    \red 2 (\nabla_\varphi \sqrt{\E_{\mu_t^\varphi}[F]})^2_{\dot C_t} \leq \alpha_t\ent_{\mu_t^\varphi}(F),
  \end{equation}
  provided that the numbers $\alpha_t$ satisfy 
  \begin{equation}
    \dot C_t C_t^{-1} \cov(\mu_{t}^\varphi) C_t^{-1} \dot C_t \leq \alpha_t \dot C_t.
  \end{equation}
    Since $\dot C_t C_t^{-2} = (1-A)$ and {\red $A$ has spectrum in $(0,1)$}, this condition follows from $\cov(\mu_{t}^\varphi)\leq \alpha_t$
  as in the covariance condition (CC), where we recall that $\mu_t^\varphi$ is again a canonical Ising measure at inverse temperature $t$.
  For $\beta<1/2$, by Lemma~\ref{lem:covbd}, one can take $\alpha_t = (\frac12 - t)^{-1}$ and thus {\red the constant $\Cbeta$ in the covariance condition (CC) is given by}
  $\Cbeta = \exp(\log(\frac12)-\log(\frac12-\beta))= (1-2\beta)^{-1}$, which is the same as in the spectral condition (SC).

  {\red From \eqref{e:entstab}, it follows that
  \begin{equation}
    \ent_{\nu}(F) \leq \Cbeta \E_{\nu_0}[\ent_{\pi}(F)], \qquad \Cbeta = e^{\int_0^\beta \alpha_t\, dt}.
  \end{equation}
  Indeed,
  \begin{equation}
    \ddp{}{t} \E_{\nu_t}[\ent_{\mu_t^\varphi}(F)] = \E_{\nu_t}\qB{2(\nabla_\varphi \sqrt{\E_{\mu_t^\varphi}[F]})^2_{\dot C_t}}
    \leq \alpha_t\E_{\nu_t}[\ent_{\mu_t^\varphi}(F)]
\label{eq: derivative entropy}
  \end{equation}
  and $\E_{\nu_t}[\ent_{\mu_t^\varphi}(F)] \to \ent_{\mu_\beta^0}(F)=\ent_{\nu}(F)$ as $t\to \beta$.}

  Next we apply the modified log-Sobolev inequality for the down-up walk for $\pi$ from Theorem~\ref{thm:hs-lsi},
  followed by Lemma~\ref{lemm_bound_DF_no_concavity-bis}:
  \begin{equation} \label{e:pf-mlsi-final}
    \ent_\nu(F)
    \leq \Cbeta \E_{\nu_0}[D^{\mathrm{du}}_\pi(F,\log F)]
    \leq \Cbeta
    {\red K(\beta)}D^{\mathrm{du}}_\nu(F,\log F),
  \end{equation}
  where $D^{\mathrm{du}}_\nu$ is the Dirichlet form associated with the rates of the down-up walk \eqref{e:downup}.

  As discussed below \eqref{e:equiv-downup-standard},
  the down-up walk Dirichlet form is equivalent
  to the standard Dirichlet form (which is the one in the statement of Theorem~\ref{thm:mlsi})
  provided that \eqref{e:equiv-downup-standard} holds.
  {\red For the canonical Ising model,}
  this is the case with constants depending on $\bar A$ and $\bar h$.
\end{proof}

{\red
\begin{remark}
As mentioned already in Remark \ref{rem: Glauber}, the measure decomposition \eqref{eq_decomp_Ising_t} has been  used to derive log-Sobolev inequalities for different models.
There are two different strategies to estimate the expectation of $\nabla_\varphi \sqrt{\E_{\mu_t^\varphi}[F]}$ appearing in \eqref{eq: derivative entropy}:
it can either be bounded from above by the Dirichlet form or by the entropy as in \eqref{eq: derivative entropy}. 
We refer to \cite[Section 3.7]{2307.07619} for a discussion of both approaches. 
For the derivation of Theorem~\ref{thm:mlsi}, we relied on the second method introduced in \cite{2203.04163} as recovering the Dirichlet form of the down-up walk dynamics lead to additional complications. 
\end{remark}

\subsection{Proof of Theorem~\ref{thm:matroid}}
\label{sec:pfmatroid}

Essentially the same proof as that of Theorem~\ref{thm:mlsi} also implies
Theorem~\ref{thm:matroid} as follows.
Note that Lemmas~\ref{lem:Vren} and \ref{lem:covbd} only use the covariance bound \eqref{e:var-bd} and not
the product stucture of $\pi_{N,m,h}$ directly. Hence they apply to all negatively correlated measures.
More generally, if the generating polynomial of $\pi = \pi_{N,m,h}$ is log-concave then \eqref{e:var-bd} holds with a factor
$4$ instead of $2$ and the lemmas continue to hold in this setting (with the replacement of $2$ by $4$). Indeed,
by definition of the generating polynomial  \eqref{eq_def_gpi},
\begin{align}
  \partial_{z_i}\partial_{z_j} \log g_\pi(1,\dots, 1) &= \P_\pi[i,j\in I] - \P_\pi[i\in I]\P_\pi[j\in I] \qquad (i\neq j)
  \\
  \partial_{z_i}^2 \log g_\pi(1, \dots, 1) &= - \P_\pi[i\in I]^2.
\end{align}
Since $1_{i\in I} = \frac12(1+\sigma_i)$, this means that
\begin{align}
  \partial_{z_i}\partial_{z_j} \log g_\pi(1,\dots, 1) &= \frac14 \cov_\pi(1+\sigma_i,1+\sigma_j) = \frac14 \cov_\pi(\sigma_i,\sigma_j)
  \\
  \partial_{z_i}^2 \log g_\pi(1,\dots, 1) &= -\frac14 (1+\E_\pi[\sigma_i])^2 
  = -\frac12 (1+\E_\pi[\sigma_i]) + \frac14\var_\pi(\sigma_i).
\end{align}
Thus the log-concavity $\He \log g_\pi \leq 0$ implies $(\cov_\pi (\sigma_i,\sigma_j) - 2 (1+\E_\pi [\sigma_i]))_{i,j} \leq 0$ as quadratic forms, i.e.,
\begin{equation}
  \var_\pi((f,\sigma)) \leq 2 \sum_i f_i^2 (1+\E_\pi [\sigma_i]) \leq 4 |f|_2^2.
\end{equation}
Lemma~\ref{lemm_bound_DF_no_concavity-bis} only uses the decomposition
$\E_{\nu} = \E_{\nu_0}[\E_{\pi_{N,m,\beta\varphi}}]$ and that $\pi_{N,m,0}$ is uniform on its support.
Thus it holds if $\pi_{N,m,0}$ is the uniform measure on the bases of a matroid.
We summarise this as follows.

\begin{lemma}
Lemmas~\ref{lem:Vren} and \ref{lem:covbd} continue to hold if $\pi_{N,m,0}$ is the uniform
measure on the bases of a matroid, with the factor $2$  replaced by $4$,
and Lemma~\ref{lemm_bound_DF_no_concavity-bis} continues to hold without change.
\end{lemma}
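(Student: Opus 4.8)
The final statement to prove is the \textbf{Lemma} asserting that Lemmas~\ref{lem:Vren} and~\ref{lem:covbd} survive (with $2$ replaced by $4$) when $\pi_{N,m,0}$ is the uniform measure on the bases of a matroid, and that Lemma~\ref{lemm_bound_DF_no_concavity-bis} survives unchanged. My plan is to trace back through each of the three lemmas and isolate precisely which hypothesis on $\pi$ is used, then verify that the weaker matroid hypotheses suffice. The key conceptual point is that two distinct consequences of the strong Rayleigh property are being used in different places: \emph{negative correlation / log-concavity of the generating polynomial} (a variance bound), and \emph{uniformity of $\pi_{N,m,0}$ on its support} (used only in the Dirichlet-form comparison). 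The matroid generalisation keeps both.

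First I would establish the covariance bound $\var_\pi((f,\sigma))\le 4|f|_2^2$ from log-concavity of the generating polynomial $g_\pi$. This is the computation displayed just above the statement: using $1_{i\in I}=\tfrac12(1+\sigma_i)$, one reads off $\partial_{z_i}\partial_{z_j}\log g_\pi(\mathbf 1)=\tfrac14\cov_\pi(\sigma_i,\sigma_j)$ for $i\neq j$ and $\partial_{z_i}^2\log g_\pi(\mathbf 1)=-\tfrac12(1+\E_\pi[\sigma_i])+\tfrac14\var_\pi(\sigma_i)$, so $\He\log g_\pi(\mathbf 1)\le 0$ yields $\big(\cov_\pi(\sigma_i,\sigma_j)-2(1+\E_\pi[\sigma_i])\delta_{ij}\big)_{i,j}\le 0$ as quadratic forms, hence $\var_\pi((f,\sigma))\le 2\sum_i f_i^2(1+\E_\pi[\sigma_i])\le 4|f|_2^2$. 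Since the uniform measure on the bases of a matroid is strongly Rayleigh (indeed its generating polynomial is the basis generating polynomial, whose log-concavity is the theorem of \cite{MR2476782,1811.01600,MR4172622}) and field-tilting preserves this, the bound holds for $\pi=\pi_{N,m,h}$ in the matroid setting. Note that for the genuinely negatively correlated case one gets the sharper factor $2$ as in \eqref{e:var-bd}; $4$ is the price of only assuming log-concavity.

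Next I would re-run the proofs of Lemmas~\ref{lem:Vren} and~\ref{lem:covbd} verbatim, replacing every appeal to \eqref{e:var-bd} by the factor-$4$ bound just proved. In Lemma~\ref{lem:Vren}, the Hessian identity $\big(f,\He V_0(\varphi)f\big)=\beta|f|_2^2-\beta^2\var_{\pi_{N,m,h+\beta\varphi}}((f,\sigma))$ is an exact computation that does not use any structure of $\pi$, so the lower bound becomes $\He V_0\ge\beta-4\beta^2$ on $X_{N,0}$ (strictly positive for $\beta<1/4$). In Lemma~\ref{lem:covbd}, the same decomposition $\var_\nu=\E_{\nu_0}[\var_\pi]+\var_{\nu_0}(\E_\pi(f,\sigma))$, Bakry--\'Emery applied to $\nu_0$ (now log-concave for $\beta<1/4$), and the bound $|\nabla_h\E_\pi[(f,\sigma)]|_2^2=\sup_{|g|_2\le1}\cov_\pi((f,\sigma),(g,\sigma))^2\le (4|f|_2^2)(4)=16|f|_2^2$ give $\var_\nu((f,\sigma))\le\big(4+\tfrac{16\beta^2}{\beta-4\beta^2}\big)|f|_2^2=\tfrac{4\beta}{\beta-4\beta^2}|f|_2^2$, i.e.\ an $O(1)$ bound for $\beta$ small, with the corresponding statement $\cov(\mu_t^\varphi)\le(\tfrac14-t)^{-1}$ (the number $\tfrac14$ replacing $\tfrac12$). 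For Lemma~\ref{lemm_bound_DF_no_concavity-bis}, the assertion is that nothing changes: I would point out that its proof (in Section~\ref{sec:downup}) invokes only the decomposition $\E_\nu=\E_{\nu_0}[\E_{\pi_{N,m,\beta\varphi}}]$ from \eqref{eq_decomp_Ising} — which is purely Gaussian and holds for any subset of $\Omega_{N,m}$, cf.\ Remark~\ref{rk:decomp-matroid} — together with the fact that $\pi_{N,m,0}$ is \emph{uniform} on its support (so that $\pi_{N,m,h}(\sigma^{ij})/\pi_{N,m,h}(\sigma)$ is an explicit field-only ratio and the down-up rates have the clean form \eqref{e:downup}); both facts hold for the uniform measure on matroid bases.

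I expect no real obstacle here: the lemma is essentially an observation that the earlier proofs were written to use only the minimal hypotheses. The one point requiring a little care — and the closest thing to a subtlety — is making sure the constant $\bar\epsilon_0$ in Theorem~\ref{thm:matroid} and the threshold $\beta<1/4$ emerging from the factor-$4$ bound are genuinely independent of $\pi$ (i.e.\ of the matroid and of $N$): this is the case because the only place $\pi$ enters the variance estimate is through log-concavity of $g_\pi$, which is uniform, and the rest of the argument (Bakry--\'Emery, the Gaussian decomposition, the entropic stability estimate of \cite{2203.04163}) is insensitive to $\pi$. I would close by restating the conclusion as in the displayed lemma: Lemmas~\ref{lem:Vren} and~\ref{lem:covbd} hold with $2$ replaced by $4$, and Lemma~\ref{lemm_bound_DF_no_concavity-bis} holds without change, when $\pi_{N,m,0}$ is the uniform measure on the bases of a matroid — which is exactly what feeds into the proof of Theorem~\ref{thm:matroid} along the lines of the proof of Theorem~\ref{thm:mlsi}.
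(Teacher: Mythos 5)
Your proposal is correct and takes essentially the same route as the paper: derive the factor-$4$ variance bound from $\He\log g_\pi(\mathbf 1)\leq 0$, rerun Lemmas~\ref{lem:Vren} and~\ref{lem:covbd} with $2$ replaced by $4$ (hence the threshold $\beta<\tfrac14$), and note that Lemma~\ref{lemm_bound_DF_no_concavity-bis} uses only the Gaussian decomposition \eqref{eq_decomp_Ising} and the uniformity of $\pi_{N,m,0}$ on its support. One small caveat: uniform measures on matroid bases are \emph{not} strongly Rayleigh in general, only the log-concavity of the basis generating polynomial \cite{MR2476782,1811.01600,MR4172622} is available, but since your computation invokes only that log-concavity, this side remark does not affect the argument.
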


\begin{proof}[Proof of Theorem~\ref{thm:matroid}]
By the above discussion, and since the entropic stability property \eqref{e:entstab} does not use any properties of $\mu_0^\varphi$
except that   \eqref{e:mut-def} holds on its support, the proof of \eqref{e:pf-mlsi-final} continues to hold.
To obtain the statement as formuled in the theorem, it remains to convert the Ising variables to occupation variables
$\omega_e = \frac12 (\sigma_e +1)$. Note that
\begin{equation}
  \prod_{e,f} (1+\epsilon_{ef} \omega_e \omega_f)
  =
  \prod_{e,f} \Big(1+\frac{\epsilon_{ef}}{4} (\sigma_e+1)(\sigma_f+1)\Big)
   \propto e^{-\frac{\beta}{2} \sum_{e,f} A_{ef} \sigma_e\sigma_f + \sum_e h_e\sigma_e}
\end{equation}
with $\beta A_{ef} = -\frac{1}{2}\log(1+\epsilon_{ef}) = O(\epsilon_{ef})$  and some $h_e \in \R$ depending on the $(\epsilon_{ef})$,
where we used that
\begin{equation}
\log\Big(1+\frac{\epsilon_{ef}}{4}(\sigma_e+1)(\sigma_f+1)\Big)
=
\frac{(\sigma_e+1)(\sigma_f+1)}{4}\log(1+\epsilon_{ef})
.
\end{equation}
By adding a multiple of the identity and normalising, we can again assume that $A$ has spectrum in $(0,1)$
and the condition on the $(\epsilon_{ef})$ then implies that $\beta = O(\bar\epsilon)$ is sufficiently small so that $\beta<\frac14$.
\end{proof}
}

\section{Recovering the Dirichlet form of the down-up walk}
\label{sec:downup}

In this section, we prove Lemma~\ref{lemm_bound_DF_no_concavity-bis} which we restate below as Lemma~\ref{lemm_bound_DF_no_concavity}
for convenience.
Also recall that the jump rates of the down-up walk for a measure $\pi$ on $\Omega_{N,m}$ with $m\leq 0$ are given by:
\begin{equation}
{\red c^{\mathrm{du}}}(\sigma,\sigma^{ij}) 
=
{\red c^{\mathrm{du}}_{\pi}} (\sigma,\sigma^{ij}) 
=
{\bf 1}_{\sigma_i=1}{\bf 1}_{\sigma_j=-1}\frac{\pi(\sigma^{ij})}{\sum_{\red k\in J_i(\sigma)} \pi(\sigma^{ik})}
,
\label{eq_jump_rates_CGM}
\end{equation}
{\red with $J_i(\sigma)$ the set of holes ($-$ spins) where a particle ($+$ spin) can be added after removing the one at $i$.}
If instead $m\geq 0$, 
the dynamics to consider has the jump rates 
of the up-down walk given by considering the above choice for $-\sigma$.

\begin{lemma}\label{lemm_bound_DF_no_concavity}
{\red In the setting of Section~\ref{sec_def_nu_r_pi},
assume $|A|_{X_{N,0}}< 1$, and that
the measure $\pi_{N,m,0}$ on $\Omega_{N,m}$ is uniform on its support (see Remark~\ref{rk:decomp-matroid}), and}  
let $D^{\mathrm{du}}_\pi$ be the Dirichlet form with the jump rates of the down-up walk \eqref{eq_jump_rates_CGM}
associated with the measure $\pi= \pi_{N,m,h+\beta\varphi}$,
and likewise for $D^{\mathrm{du}}_\nu$.
There is $K(\beta)>0$ independent of $N,m,h$ and the test function $F:\Omega_{N,m}\rightarrow\R_+$ such that:
\begin{equation} \label{e:bound_DF_no_concavity-bis}
\E_{\nu_0}[D^{\mathrm{du}}_{\pi}(F)]
\leq 
K(\beta) D^{\mathrm{du}}_{\nu}(F)
=
K(\beta) \sum_{\sigma,\sigma'\in\Omega_{N,m}}\nu(\sigma) c_\nu(\sigma,\sigma') \big[F(\sigma')-F(\sigma)\big]^2.
\end{equation}
The same bound also holds with $D^{\mathrm{du}}_\pi(F),D^{\mathrm{du}}_\nu(F)$ replaced by $D^{\mathrm{du}}_{\pi}(F,\log F),D^{\mathrm{du}}_{\nu}(F,\log F)$.
\end{lemma}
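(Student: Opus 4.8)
The plan is to write the left-hand side as an expectation over $\nu_0$ of a sum over ordered pairs $(i,j)$ of ``edge terms'' and to match each of these against the corresponding edge term in $D^{\mathrm{du}}_\nu(F)$, paying for the change of measure $\nu_0 \times \pi_{N,m,h+\beta\varphi} \to \nu$ and for the change of jump rates $c^{\mathrm{du}}_\pi \to c^{\mathrm{du}}_\nu$. Concretely, by the decomposition \eqref{eq_decomp_Ising}, for each fixed $\varphi$,
\begin{equation}
  \E_{\nu_0}\bigl[D^{\mathrm{du}}_\pi(F)\bigr]
  = \sum_{i,j}\E_{\nu_0}\Bigl[\E_{\pi}\bigl[c^{\mathrm{du}}_\pi(\sigma,\sigma^{ij})(F(\sigma^{ij})-F(\sigma))^2\bigr]\Bigr],
\end{equation}
and since both $\pi = \pi_{N,m,h+\beta\varphi}$ and $\nu$ are supported on $\Omega_{N,m}$, the inner expectation is
$\sum_{\sigma: \sigma_i=1,\sigma_j=-1} \pi(\sigma)\,c^{\mathrm{du}}_\pi(\sigma,\sigma^{ij})(F(\sigma^{ij})-F(\sigma))^2$. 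So the task reduces to controlling, for every pair $(\sigma,\sigma^{ij})$, the quantity $\E_{\nu_0}[\pi(\sigma)\,c^{\mathrm{du}}_\pi(\sigma,\sigma^{ij})]$ by $K(\beta)\,\nu(\sigma)\,c_\nu(\sigma,\sigma^{ij})$, i.e.\ by $K(\beta)\,\nu(\sigma)\,c^{\mathrm{du}}_\nu(\sigma,\sigma^{ij})$.

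The key structural simplification is that $\pi_{N,m,0}$ is uniform on its support, so $\pi_{N,m,h+\beta\varphi}(\sigma) \propto \prod_i e^{\sigma_i(h_i+\beta\varphi_i)}$ and the ratio $\pi(\sigma^{ij})/\pi(\sigma) = e^{-2(h_i+\beta\varphi_i)+2(h_j+\beta\varphi_j)}$ depends only on the field at the two flipped sites; moreover the denominator $\sum_{k\in J_i(\sigma)}\pi(\sigma^{ik})$ in \eqref{eq_jump_rates_CGM} also admits a clean comparison because $J_i(\sigma)$ for the uniform matroid is $I(\sigma)^c\cup\{i\}$, of size $N-k+1$ of order $N$. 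The first step is therefore to rewrite $c^{\mathrm{du}}_\pi(\sigma,\sigma^{ij})$ and $c^{\mathrm{du}}_\nu(\sigma,\sigma^{ij})$ with their explicit denominators and observe that the ratio $c^{\mathrm{du}}_\pi/c^{\mathrm{du}}_\nu$, for fixed $(\sigma,i,j)$, is a ratio of sums of exponentials in the fields which can be bounded above and below by quantities of the form $e^{O(\beta)(\varphi_j-\varphi_i)}$ times ratios of denominators; crucially the combinatorial prefactors $N-k+1$ cancel. The second step is to absorb the remaining $\varphi$-dependence: one uses the decomposition \eqref{eq_decomp_Ising} in the reverse direction, writing $\nu(\sigma)\,c^{\mathrm{du}}_\nu(\sigma,\sigma^{ij}) = \E_{\nu_0}[\pi(\sigma)]\cdot(\text{denominator ratio evaluated at }\nu)$ — or more precisely one expresses both sides of the desired inequality as $\nu_0$-expectations and reduces to a pointwise-in-$\varphi$ bound of the form $\pi_\varphi(\sigma)\,D_{\varphi,i,j}^{-1} \le K(\beta)\, \E_{\nu_0}[\pi_\bullet(\sigma)]\,\E_{\nu_0}[D_{\bullet,i,j}]^{-1}$ after pulling out the field factors. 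The field factors $e^{2\beta(\varphi_j-\varphi_i)}$ are handled using the Gaussian-type concentration of $\nu_0$: by Lemma~\ref{lem:Vren}, $\nu_0$ is strictly log-concave (Hessian $\ge \beta - 2\beta^2 > 0$ for $\beta<1/2$), so linear functionals $(\varphi_j-\varphi_i)$ have Gaussian tails with variance $O(1/(\beta-2\beta^2))$, and hence $\E_{\nu_0}[e^{c(\varphi_j-\varphi_i)}] \le e^{c^2 \cdot O(1/(\beta-2\beta^2))} = K(\beta) < \infty$, uniformly in $N,m,h,i,j$.

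Putting these together: after the pointwise reductions one is left with a bound of the shape
\begin{equation}
  \E_{\nu_0}\bigl[\pi_\varphi(\sigma)\,c^{\mathrm{du}}_{\pi_\varphi}(\sigma,\sigma^{ij})\bigr]
  \le K(\beta)\,\E_{\nu_0}\bigl[\pi_\varphi(\sigma)\bigr]\,\bar c(\sigma,\sigma^{ij}),
\end{equation}
where $\bar c$ is a symmetrised/averaged version of the down-up rate that one then identifies (up to a further $K(\beta)$) with $c^{\mathrm{du}}_\nu$; the normalisation $\E_{\nu_0}[\pi_\varphi(\sigma)] \propto \nu(\sigma)$ closes the loop. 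Summing over the ordered pairs $(i,j)$ and over $\sigma$ then gives \eqref{e:bound_DF_no_concavity-bis}. For the variant with $D^{\mathrm{du}}_\pi(F,\log F)$ in place of $D^{\mathrm{du}}_\pi(F)$, the argument is identical: the Dirichlet-form comparison is edgewise and only involves the factors $\nu(\sigma)c(\sigma,\sigma^{ij})$ multiplying $(F(\sigma^{ij})-F(\sigma))(\log F(\sigma^{ij})-\log F(\sigma))$, which is nonnegative just like the square, so the same pointwise rate bounds apply verbatim (using monotonicity of the comparison in the nonnegative weights $\nu(\sigma)c(\sigma,\sigma^{ij})$).

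\textbf{Main obstacle.} I expect the crux to be the interplay between the \emph{denominators} of the down-up rates at different temperatures. Unlike the Ising energy ratios, the sum $\sum_{k\in J_i(\sigma)}\pi(\sigma^{ik})$ mixes together many configurations with different field values $\beta\varphi_k$, so it is not simply an exponential of a linear functional of $\varphi$; one must argue that this sum is comparable (up to $K(\beta)$, uniformly in $N$) to its $\nu_0$-average, which again rests on the log-concavity of $\nu_0$ and on the fact that $|J_i(\sigma)| = N-k+1$ is macroscopic so that fluctuations of $\varphi$ over the sum average out. Getting a constant $K(\beta)$ that is genuinely independent of $N,m,h$ here — rather than one that degrades with the system size — is the delicate point and is exactly where the choice between down-up and up-down walk (ensuring the denominator has $\Theta(N)$ terms) is used.
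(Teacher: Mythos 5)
Your first reduction is the same as the paper's: both proofs reduce \eqref{e:bound_DF_no_concavity-bis} to the pointwise (in $\sigma,i,j$) inequality $\E_{\nu_0}[\pi(\sigma)c^{\mathrm{du}}_\pi(\sigma,\sigma^{ij})]\leq K(\beta)\,\nu(\sigma)c^{\mathrm{du}}_\nu(\sigma,\sigma^{ij})$, and your remark that the $F,\log F$ variant follows edgewise from the same rate comparison is correct. The gap is in how you control the $\varphi$-dependence of the rate. Your moment bound $\E_{\nu_0}[e^{c(\varphi_j-\varphi_i)}]\leq e^{c^2O(1/(\beta-2\beta^2))}$ drops the mean, and the relevant measure is in any case not $\nu_0$ but $\nu_0$ tilted by $\pi_\varphi(\sigma)$: completing the square (as the paper does) shows that conditionally on $\sigma$ the field $\varphi$ is Gaussian with covariance $\beta^{-1}(P-A)$ \emph{and mean} $a^\sigma=\sigma-A\sigma$. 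The mean contributes factors of the type $e^{2\beta(a^\sigma_j-a^\sigma_k)}$, i.e.\ $e^{2\beta((A\sigma)_k-(A\sigma)_j)}$ up to bounded terms, which are only bounded by $e^{4\beta\bar A}$ with $\bar A=\max_i\sum_j|A_{ij}|$ — not by any $K(\beta)$, whereas the lemma (and its use in Theorem~\ref{thm:mlsi} for the down-up walk, where the constant must be independent of $\bar A,\bar h$) requires a constant depending on $\beta$ alone. The whole point of the paper's computation is that these mean terms cancel \emph{exactly} against the energy increments $c_k=\frac\beta2\nabla_{ij}(\sigma,A\sigma)-\frac\beta2\nabla_{ik}(\sigma,A\sigma)$ hidden in the denominator of $c^{\mathrm{du}}_\nu$; after this cancellation only quadratic-form terms $2\beta(\delta_j-\delta_k,(P-A)(\delta_j-\delta_k))$ etc.\ survive, giving $K\leq e^{8\beta}$. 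This cancellation mechanism is absent from your sketch, and without it the approach does not yield the stated uniformity.

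The second issue is your treatment of the random denominator $\sum_{k\in J_i(\sigma)}e^{2(\beta\varphi_k+h_k)}$. Self-averaging "because there are $\Theta(N)$ terms" is not a valid mechanism uniformly in $h$: a single very large $h_k$ makes one term dominate, so no law-of-large-numbers effect is available (and in fact the cardinality of $J_i(\sigma)$ plays no role in the paper's proof of this lemma; the down-up versus up-down choice is only used later, to compare the down-up Dirichlet form with the standard one via \eqref{e:equiv-downup-standard}). The paper instead handles the denominator by Jensen's inequality for the convex map $x\mapsto 1/x$ with respect to the probability measure $U_h(k)\propto e^{2h_k+c_k}$ built from the $\nu$-rate denominator; this simultaneously removes the $h$-dependence and sets up the exact mean/energy cancellation described above. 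So the proposal identifies the right crux but the two steps offered to resolve it (a mean-free Gaussian moment bound, and concentration of the denominator) do not work as stated; the missing ingredients are the exact Gaussian identification of the tilted law of $\varphi$ (mean $\sigma-A\sigma$, covariance $\beta^{-1}(P-A)$) and the Jensen-plus-cancellation argument.
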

\begin{proof}
Fix $F:\Omega_{N,m}\rightarrow\R_+$. 
{\red Since $\pi_{N,m,0}$ is uniform on its support, $\pi_{N,m,0}(\sigma)=\pi_{N,m,0}(\sigma')$ for all $\sigma,\sigma' \in \{\pm 1\}^N$ for which both sides are nonzero.
Hence}
the jump rates for $\pi=\pi_{N,m,h+\alphab\varphi}$ read:
\begin{align}
{\red c^{\mathrm{du}}_\pi} (\sigma,\sigma^{ij}) 
&=
\red {\bf 1}_{\sigma_i=1}{\bf 1}_{\sigma_j=-1}\frac{e^{(\sigma_i-\sigma_j)\big(\alphab(\varphi_j-\varphi_i) + h_j-h_i\big)} \pi_{N,m,0}(\sigma^{ij})}{\sum_{k \in J_i(\sigma) } e^{(\sigma_i-\sigma_k)\big(\alphab(\varphi_k-\varphi_i)+h_k-h_i\big)}\pi_{N,m,0}(\sigma^{ik})}
\nnb
&=
{\bf 1}_{\sigma_i=1}{\bf 1}_{\sigma_j=-1} {\red {\bf 1}_{j\in J_i(\sigma)}}\frac{e^{(\sigma_i-\sigma_j)\big(\alphab(\varphi_j-\varphi_i) + h_j-h_i\big)}}{\sum_{\red k \in J_i(\sigma) } e^{(\sigma_i-\sigma_k)\big(\alphab(\varphi_k-\varphi_i)+h_k-h_i\big)}}
\nnb
&= 
{\bf 1}_{\sigma_i=1}{\bf 1}_{\sigma_j=-1} {\red {\bf 1}_{j\in J_i(\sigma)}}\frac{e^{2(\alphab\varphi_j + h_j)}}{\sum_{\red k \in J_i(\sigma) } e^{2(\alphab\varphi_k + h_k)}}
.
\end{align}
It suffices to prove that,
for each $\sigma\in\Omega_{N,m}$, each $i\in I(\sigma)$, and each
$\red j \in J_i(\sigma)$, all henceforth fixed:
\begin{equation}
\E_{\nu_0}\big[\pi(\sigma) {\red c^{\mathrm{du}}_\pi} (\sigma,\sigma^{ij})\big]
\leq 
K\nu(\sigma)  {\red c^{\mathrm{du}}_\nu} (\sigma,\sigma^{ij})
\label{eq_bound_jump_rates_without_concavity}
\end{equation}
with a constant $K=K(\beta)$.
(We remark that if $\pi(\sigma) {\red c^{\mathrm{du}}_\pi} (\sigma,\sigma^{ij})$  were a concave function of $\pi$, 
then this inequality would hold with constant $K=1$. 
This is unfortunately not the case.)

Recall that $\E_{\nu_0}[\pi(\sigma)] = \nu(\sigma)$ by definition.
From the expressions for $\nu_0$ and $\pi_{N,m,h+\alphab\varphi}$ in Section~\ref{sec_def_nu_r_pi},
{\red with $C = C_\beta-C_0 = (\beta A)^{-1} - \beta^{-1}P$,}
the left-hand side of~\eqref{eq_bound_jump_rates_without_concavity} reads:
\begin{align}
\E_{\nu_0}\big[\pi(\sigma) {\red c^{\mathrm{du}}_\pi} (\sigma,\sigma^{ij})\big]
&\propto
  \int_{X_{N,0}} \exp\bigg[-\frac{(\varphi,C^{-1}\varphi)}{2} - \frac{\alphab(\varphi-\sigma,\varphi-\sigma)}{2} + (h,\sigma)\bigg] 
\nonumber\\
&\hspace{5cm}\times\frac{e^{2(\alphab\varphi_j + h_j)}}{\sum_{\red k \in J_i(\sigma) } e^{2(\alphab\varphi_k + h_k)}}\, d\varphi
\nnb
&=
\nu(\sigma){\bf E}_{\gamma}\bigg[\frac{e^{2(\alphab\varphi_j + h_j)}}{\sum_{\red k\in J_i(\sigma)} e^{2(\alphab\varphi_k + h_k)}}\bigg]
,
\end{align}
where ${\bf E}_\gamma$ is the expectation with respect to a Gaussian measure $\gamma$ on $X_{N,0}$ with covariance matrix:
\begin{equation}\label{e:C-alpha}
\red  (C^{-1}+\alphab P)^{-1} = \frac{1}{\alphab} (P-A)
\end{equation}
and mean:
\begin{equation} \label{e:a-sigma}
a^\sigma
=
\alphab(C^{-1}+\alphab P)^{-1}\sigma
= \sigma - A\sigma
.
\end{equation}
{\red For the last equalities in \eqref{e:C-alpha} and \eqref{e:a-sigma},}
from $C=(\beta A)^{-1}-\alphab^{-1} P$ 
where $|A|_{X_{N,0}}<1$ is the operator norm of $A$ acting on $X_{N,0}$
and $P$ is the identity on $X_{N,0}$, we get
\begin{equation}
(\alphab C)^{-1} 
=
(A^{-1}-P)^{-1}
= \frac{A}{P-A} 
\end{equation}
as operators on $X_{N,0}$ {\red so that} 
\begin{equation} \label{e:C-alpha-bis}
\alphab(C^{-1}+\alphab P)^{-1}
= \qa{ \frac{A + (P-A)}{P-A} }^{-1}
= P - A
.
\end{equation}

Writing $\gamma_0$ for the centred Gaussian distribution with covariance $(C^{-1}+\alphab P)^{-1}$, 
we get:
\begin{align}
\label{eq: reecriture gamma0}
\E_{\nu_0}\big[\pi(\sigma) {\red c^{\mathrm{du}}_\pi} (\sigma,\sigma^{ij})\big]
=
\nu(\sigma){\bf E}_{\gamma_0}\Big[\frac{e^{2\alphab(\psi_j + a^\sigma_j) + 2h_j}}{\sum_{\red k \in J_i(\sigma)} e^{2\alphab(\psi_k+a^\sigma_k) + 2h_k}}\Big]
.
\end{align}
To bound this last expectation by ${\red c^{\mathrm{du}}_\nu} (\sigma,\sigma^{ij})$, we first rewrite ${\red c^{\mathrm{du}}_\nu} (\sigma,\sigma^{ij})$.
By~\eqref{eq_jump_rates_CGM}, one has:
\begin{equation}
{\red c^{\mathrm{du}}_\nu} (\sigma,\sigma^{ij})
=
{\bf 1}_{\sigma_i=1}{\bf 1}_{\sigma_j=-1} {\red {\bf 1}_{j\in J_i(\sigma)}}
\frac{\exp\Big[2h_j-\frac{\beta}{2}\nabla_{ij}(\sigma,A\sigma)\Big]}{\sum_{\red k\in J_i(\sigma) }\exp\Big[2h_k-\frac{\beta}{2}\nabla_{ik}(\sigma,A\sigma)\Big]}
,
\end{equation}
where $\nabla_{ij}F(\sigma) = F(\sigma^{ij})-F(\sigma)$. Therefore, defining
\begin{equation}
  c_{k} = \frac{\beta}{2}\nabla_{ij} (\sigma,A\sigma)- \frac{\beta}{2}\nabla_{ik} (\sigma,A\sigma),
\end{equation}
one has
\begin{equation}
{\red c^{\mathrm{du}}_\nu} (\sigma,\sigma^{ij})
= 
\frac{e^{2h_j}}{\sum_{\red k\in J_i(\sigma)}e^{2h_k + c_{k}}}
.
\end{equation}
Let $U_h=U_h^{i,j}$ denote the following probability measure on $\red J_i(\sigma)$: 
\begin{equation}
U_h(k)
=
\frac{1}{Z_h}e^{2h_k + c_k}
,\qquad 
Z_h
=
Z_h(\sigma)
=
\sum_{\red k \in J_i(\sigma)} e^{2h_k + c_k}
.
\end{equation}
Then, multiplying and dividing the right-hand side of \eqref{eq: reecriture gamma0} by ${\red c^{\mathrm{du}}_\nu} (\sigma,\sigma^{ij})$ and using Jensen's inequality for the convex function $x\in(0,\infty)\mapsto 1/x$:
\begin{align}
\E_{\nu_0}\big[\pi(\sigma) {\red c^{\mathrm{du}}_\pi} (\sigma,\sigma^{ij})\big]
&=
  \nu(\sigma)  {\red c^{\mathrm{du}}_\nu} (\sigma,\sigma^{ij})
{\bf E}_{\gamma_0}\Big[\frac{e^{2\alphab(\psi_j + a^\sigma_j) }}{\E_{U_h}\big[ e^{2\alphab(\psi_\cdot+a^\sigma_\cdot)  - c_{\cdot }  }\big]}\Big]
\nonumber\\
&\leq
\nu(\sigma) {\red c^{\mathrm{du}}_\nu} (\sigma,\sigma^{ij}) 
\E_{U_h}\bigg[{\bf E}_{\gamma_0}\Big[e^{2\alphab(\psi_j -\psi_\cdot+ a^\sigma_j-a^\sigma_\cdot)  }\Big] e^{ c_{\cdot}}\bigg]
.
\label{eq_bound_average_jump_rates}
\end{align}
Thus \eqref{eq_bound_jump_rates_without_concavity} holds with
\begin{equation}
  K = \max_{i,j}\E_{U_h}\bigg[{\bf E}_{\gamma_0}\Big[e^{2\alphab(\psi_j -\psi_\cdot+ a^\sigma_j-a^\sigma_\cdot)  }\Big] e^{   c_{\cdot}}\bigg].
  \label{eq_def_K}
\end{equation}
Let us check that $K$ is bounded above uniformly in $\sigma$ in terms of $\beta$ only.
For each $\red k \in J_i(\sigma)$, the Gaussian expectation on the right-hand side is:
\begin{equation}
{\bf E}_{\gamma_0}\Big[e^{2\alphab(\psi_j -\psi_k+ a^\sigma_j-a^\sigma_k) }\Big]
=
e^{2\alphab(a^\sigma_j-a^\sigma_k)}e^{2\alphab^2 \big(\delta_j-\delta_k,(C^{-1}+\alphab P)^{-1}(\delta_j-\delta_k)\big)}
.
\label{eq_bound_average_gamma_0}
\end{equation}
By \eqref{e:C-alpha} and \eqref{e:a-sigma}, the two terms in the exponent can be written as
\begin{align}
  2\alphab^2 \big(\delta_j-\delta_k,(C^{-1}+\alphab P)^{-1}(\delta_j-\delta_k)\big)
  &=
  2 \alphab \big(\delta_j-\delta_k,(P - A)(\delta_j-\delta_k)\big)
  \label{eq_formula_diff_asigma_line1}
\\
  2\alphab(a^\sigma_j-a^\sigma_k) &= 2 \alphab (\sigma, (P- A)(\delta_j-\delta_k))
  .
  \label{eq_formula_diff_asigma}
\end{align}
{\red
Together with the $c_k$ term in \eqref{eq_def_K}, split the second line as:
\begin{equation} \label{e:ac}
2\alphab(a^\sigma_j-a^\sigma_k) + c_k
=
\Big[2\alphab(a^\sigma_j-a^\sigma_i) +\frac{\beta}{2}\nabla_{ij}(\sigma,A\sigma)\Big] 
- \Big[2\alphab(a^\sigma_k-a^\sigma_i)+\frac{\beta}{2}\nabla_{ik}(\sigma,A\sigma)\Big].
\end{equation}
Since $\sigma_j=-1$ and $\sigma_i=1$, one has:
\begin{align}
\frac{\beta}{2}\nabla_{ij}(\sigma,A\sigma)
&=
-\beta\sum_{\ell\notin\{i,j\}}\sigma_\ell (\sigma_i-\sigma_j)(A_{i\ell}-A_{j\ell}),
\nnb
&=
-2\beta\sum_{\ell\notin\{i,j\}}\sigma_\ell(A_{i\ell}-A_{j\ell})
\nnb
&=
	-2\beta \big(\sigma, A(\delta_i-\delta_j)\big) +2\beta(A_{ii} -A_{ji} - A_{ij} + A_{jj})
.
\end{align}
Comparing with~\eqref{eq_formula_diff_asigma} and using that $\big(\sigma, P (\delta_i-\delta_j)\big)  = \big(\delta_i-\delta_j, P (\delta_i-\delta_j)\big)$ as 
$\sigma_j=-1$ and $\sigma_i=1$, we get:
\begin{align}
2\beta (a^\sigma_j-a^\sigma_i) + 
\frac{\beta}{2}\nabla_{ij}(\sigma,A\sigma)
&=
 - 2\beta \big(\sigma, P (\delta_i-\delta_j)\big) 
+ 2\beta \big(\delta_i-\delta_j, A(\delta_i-\delta_j)\big) 
\nnb
&=
-2\beta \big(\delta_i-\delta_j, (P-A)(\delta_i-\delta_j)\big) .
\end{align}
If $k=i$ then combining with~\eqref{eq_formula_diff_asigma_line1} gives:
\begin{equation}
 2 \alphab \big(\delta_j-\delta_k,(P - A)(\delta_j-\delta_k)\big) + 2\beta (a^\sigma_j-a^\sigma_k) + 
c_k
=
0
.
\end{equation}
If $k\neq i$ then $\sigma_k=-1=\sigma_j$ and both brackets in~\eqref{e:ac} are similar. 
Thus:
\begin{equation}
2\beta (a^\sigma_j-a^\sigma_k) + 
c_k
=
-2\beta\big( \delta_i-\delta_j,(P-A)(\delta_i-\delta_j)\big)
+2\beta\big( \delta_i-\delta_k,(P-A)(\delta_i-\delta_k)\big)
.
\end{equation}
Since $0<A<P$, 
combining with~\eqref{eq_formula_diff_asigma_line1} gives
\begin{equation}
2 \alphab \big(\delta_j-\delta_k,(P - A)(\delta_j-\delta_k)\big) + 2\beta (a^\sigma_j-a^\sigma_k) + 
c_k
\leq 
8\beta
.
\end{equation}
Recalling~\eqref{eq_def_K} we conclude $K\leq e^{8\beta}$.}
\end{proof}

\section{Moving particle lemmas}
\label{sec:mp}

The following lemma was proved in \cite[Section 5]{MR2271489} and \cite[Theorem 6.1]{MR1621569}.

\begin{lemma}[Moving particle lemma] 
  \label{lem:mp}
  Let $\Lambda \subset \Z$ be an interval and assume the canonical Ising model $\nu$ on $\Lambda$
  has bounded couplings $\max_i\sum_{j \neq i} |A_{ij}|\leq \bar A$ and fields $\max_i |h_i|\leq \bar h$
  and that it satisfies the finite range property $A_{ij}=0$ if $\dist(i,j)>R$ for some $R < \infty$.
  Then for $i<j$ with $i,j \in\Lambda$,
  \begin{equation}
    \E_{\nu}[(F(\sigma)-F(\sigma^{ij}))^2] \leq C(\beta \bar A,\bar h,R)|i-j| \sum_{k=i}^{j-1} \E_{\nu}[(F(\sigma)-F(\sigma^{k,k+1}))^2].
  \end{equation}
\end{lemma}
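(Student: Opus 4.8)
The plan is to prove a \emph{canonical path} (moving particle) estimate: realise the single exchange $\sigma\mapsto\sigma^{ij}$ as a composition of nearest-neighbour exchanges and pay for it through Cauchy--Schwarz. Since $\sigma^{ij}=\sigma$ when $\sigma_i=\sigma_j$, we may assume $\sigma_i\neq\sigma_j$, say $\sigma_i=+1$, $\sigma_j=-1$. The transposition of the coordinates $i$ and $j$ can be written as a product of at most $2|i-j|$ transpositions of the form $(k,k+1)$ with $i\leq k<j$: for instance, move the $+$ spin at $i$ rightward to $j$ (which shifts the interval $[i,j]$ one step to the left), then shift the block $[i,j-1]$ one step back to the right; each pair $(k,k+1)$ then appears at most twice. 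This produces a path $\sigma=\eta_0,\eta_1,\dots,\eta_n=\sigma^{ij}$ with $n\leq 2|i-j|$ and $\eta_{\ell+1}=\eta_\ell^{k_\ell,k_\ell+1}$. Telescoping and the inequality $\big(\sum_{\ell=0}^{n-1}a_\ell\big)^2\leq n\sum_{\ell=0}^{n-1}a_\ell^2$ give
\begin{equation}
  \E_\nu\big[(F(\sigma)-F(\sigma^{ij}))^2\big]\leq 2|i-j|\sum_{\ell=0}^{n-1}\E_\nu\big[(F(\eta_\ell)-F(\eta_{\ell+1}))^2\big].
\end{equation}

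Each $\eta_\ell$ is the image of $\sigma$ under a fixed coordinate permutation $\tau_\ell$ (a partial shift of a subinterval of $[i,j]$), with $\eta_{\ell+1}=(\eta_\ell)^{k_\ell,k_\ell+1}$, so substituting $\omega=\tau_\ell\sigma$ yields
\begin{equation}
  \E_\nu\big[(F(\eta_\ell)-F(\eta_{\ell+1}))^2\big]=\sum_{\omega}\nu(\tau_\ell^{-1}\omega)\,\big(F(\omega)-F(\omega^{k_\ell,k_\ell+1})\big)^2.
\end{equation}
Thus it suffices to bound the Radon--Nikodym factor $\nu(\tau_\ell^{-1}\omega)/\nu(\omega)=\exp\big(H(\omega)-H(\tau_\ell^{-1}\omega)\big)$, where $H(\sigma)=\tfrac{\beta}{2}(\sigma,A\sigma)-(h,\sigma)$, uniformly over $\omega$ (it is enough to do so on $\{\omega_{k_\ell}\neq\omega_{k_\ell+1}\}$, where the summand does not vanish), and then sum over $\ell$, using that each nearest-neighbour pair $(k,k+1)$ with $i\leq k<j$ occurs $O(1)$ times along the path. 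If this factor is $\leq C(\beta\bar A,\bar h,R)$, one obtains exactly the asserted inequality, the explicit factor $|i-j|$ being solely the length of the path entering the Cauchy--Schwarz step.

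The main obstacle, and the only nontrivial point, is precisely this uniform bound on $\exp\big(H(\omega)-H(\tau_\ell^{-1}\omega)\big)$: the permutation $\tau_\ell$ may displace as many as $|i-j|$ spins simultaneously, and a crude estimate of the energy difference is of order $\beta\bar A\,|i-j|+\bar h\,|i-j|$, which would produce an inadmissible factor $e^{c|i-j|}$. The role of the hypotheses is that the energy cost of such a partial shift is in fact \emph{localised}: by the finite-range condition $A_{ij}=0$ for $\dist(i,j)>R$, the coupling part of $H(\omega)-H(\tau_\ell^{-1}\omega)$ involves only the $O(R)$ bonds meeting the boundary of the shifted block (and, in the $\Z^d$ application, where the lemma is applied to conditional laws on one-dimensional segments, the relevant effective field is likewise supported within distance $R$ of the boundary), so that $|H(\omega)-H(\tau_\ell^{-1}\omega)|\leq C(\beta\bar A,\bar h,R)$ uniformly in $\ell$ and in $|i-j|$. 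Carrying out this localisation bookkeeping carefully — in particular controlling the rearrangement of the internal bonds — is the content of \cite[Section 5]{MR2271489} and \cite[Theorem 6.1]{MR1621569}. An equivalent way to organise the argument is as the Diaconis--Stroock/Sinclair canonical-path comparison between the reversible chain performing the exchange $(i,j)$ and the chain performing nearest-neighbour exchanges in $[i,j]$; the same localised change-of-measure estimate then bounds the congestion and gives the stated constant.
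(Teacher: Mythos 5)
Your canonical-path skeleton (telescoping plus Cauchy--Schwarz over a path of length $\le 2|i-j|$, then a change of variables and a Radon--Nikodym bound) is fine as bookkeeping, but the step you yourself identify as the only nontrivial point is exactly where the argument breaks, and the "localisation" you invoke is false under the stated hypotheses. Your intermediate configurations $\eta_\ell=\tau_\ell\sigma$ differ from $\sigma$ by a shift of a block of length up to $|i-j|$, and the finite-range condition does \emph{not} make $H(\omega)-H(\tau_\ell^{-1}\omega)$ an $O(R)$ boundary term. First, the field part is $\sum_{k=i}^{i+\ell-1}h_k(\sigma_{k+1}-\sigma_k)+h_{i+\ell}(\sigma_i-\sigma_{i+\ell})$, which for a general field with $\max_i|h_i|\le\bar h$ (e.g.\ $h_k=(-1)^k\bar h$ with matching $\sigma$) is of order $\bar h\,|i-j|$, not $O(\bar h)$; it only telescopes to a boundary term if $h$ is constant, which is not assumed (and not true in the applications). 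Second, the coupling part: the lemma assumes only $\max_i\sum_j|A_{ij}|\le\bar A$ and finite range, not translation invariance, so the "rearrangement of the internal bonds" costs $\sum_k(A_{k,k'}-A_{k+1,k'+1})(\cdots)$, again of order $\beta\bar A\,|i-j|$ in general. Hence the Radon--Nikodym factor along your path can be as large as $e^{c(\beta\bar A+\bar h)|i-j|}$, which is precisely the inadmissible factor you set out to avoid; deferring this to \cite{MR2271489,MR1621569} does not help, since those references do not prove a uniform bound for your block-shift intermediates (even for a product measure conditioned on its sum with bounded inhomogeneous weights this bound is false, and the proofs there are organised differently).

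For comparison, the paper's proof sidesteps the issue entirely: it first reduces to exchanges between sites of the sparse sublattice $i_k=i+k(R+1)$, and conditions on all spins outside $I=\{i_0,\dots,i_\ell\}$. By finite range, the sites of $I$ do not interact with each other, so the conditional law of $\sigma|_I$ is an inhomogeneous Bernoulli product measure conditioned on its sum, with single-site log-weights bounded by $\beta\bar A+\bar h$; the interaction is absorbed entirely into these bounded effective fields. The moving particle lemma for that conditioned product measure (which is the genuinely nontrivial input, \cite[Lemma~5.2]{MR2271489}) then gives the claim, and the remaining comparison of exchanges $\sigma^{i_k,i_{k+1}}$ with genuine nearest-neighbour exchanges costs only an $R$-dependent constant. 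If you want to salvage a self-contained path argument, you would have to either average over paths/use a recursion in which every intermediate configuration differs from $\sigma$ by $O(1)$ transpositions (so that each single exchange costs only $e^{O(\beta\bar A+\bar h)}$), or perform the same conditioning reduction as the paper; as written, your proof has a genuine gap.
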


\begin{proof}
  It suffices to consider the case that $j=i_\ell$ where $i_k=i+k(R+1)$ and show that
  \begin{equation} \label{e:pf-mp-1}
    \E_{\nu}[(F(\sigma)-F(\sigma^{ij}))^2] \leq C(\beta \bar A, \bar h,R)|\ell| \sum_{k=i}^{\ell-1} \E_{\nu}[(F(\sigma)-F(\sigma^{i_k,i_{k+1}}))^2].
  \end{equation}
  The general case then follows by bounding (with a different constant)
  \begin{equation}
    \E_{\nu}[(F(\sigma)-F(\sigma^{i_k,i_{k+1}}))^2]
    \leq
    C(\beta \bar A, \bar h, R) \sum_{m=0}^{R} \E_{\nu}[(F(\sigma)-F(\sigma^{i+m,i+m+1}))^2],
  \end{equation}
  whose proof we omit (and which is easy since $R$ is fixed).
  To verify \eqref{e:pf-mp-1}, let $I = \{i_0, \dots, i_\ell\}$.
  By the finite-range property, conditional on the spins outside $I$, the canonical Ising measure restricted to $\sigma|_I$ is a product Bernoulli measure
  conditioned on its sum  with marginal probabilities $\propto e^{\alpha_k \sigma_{i_k}}$
  where $\alpha_k \in [-B,B]$ for $B = \bar A + \bar h$.
  Hence by \cite[Lemma~5.2]{MR2271489}, the inequality  \eqref{e:pf-mp-1} follows.
\end{proof}

A standard consequence of the above moving particle lemma is the following comparison estimate for the Kawasaki Dirichlet
form on cubes $\Lambda \subset \Z^d$.

\begin{corollary} \label{cor:mp-Zd}
  Let $d\geq 1$. For the canonical Ising model on a cube $\Lambda \subset \Z^d$ of side length $L$ {\red under the assumptions $\max_i\sum_j|A_{ij}|\leq \bar A$, $\max_i|h_i|\leq \bar h$, and $A_{ij}=0$ if $\dist(i,j) > R$},
  \begin{equation}
    \frac{1}{L^d} \sum_{i,j}\E_{\nu}[(F(\sigma)-F(\sigma^{ij}))^2] \leq C(d,\beta\bar A,\bar h,R)L^2 \sum_{i \sim j} \E_{\nu}[(F(\sigma)-F(\sigma^{ij}))^2],
  \end{equation}
  where the sum on the left runs over all vertices $i$ and $j$ and the sum on the right over pairs of nearest neighbours.
\end{corollary}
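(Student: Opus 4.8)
The plan is to deduce Corollary~\ref{cor:mp-Zd} from the one-dimensional moving particle lemma, Lemma~\ref{lem:mp}, applied to the conditional measures of $\nu$ along axis-parallel lines. Nothing here is genuinely hard beyond the one-dimensional input; there are two places where a little care is needed, and I flag them now: (a) a general exchange $\sigma\mapsto\sigma^{ij}$ must be realised through a number of exchanges between points lying on common lines that is bounded \emph{uniformly in $L$}, so that the change of $\nu$ under the intervening coordinate permutations stays of order one; and (b) the final sum over all pairs $(i,j)$ must be reorganised so that exactly the factor $L^2$ survives.

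First I would reduce a general exchange to exchanges along coordinate lines. For $i=(i_1,\dots,i_d)$ and $j=(j_1,\dots,j_d)$ in $\Lambda$, set $q_k=(j_1,\dots,j_k,i_{k+1},\dots,i_d)$ for $k=0,\dots,d$, so that $q_0=i$, $q_d=j$, while $q_{k-1}$ and $q_k$ differ only in their $k$-th coordinate and hence are the endpoints of a common axis-parallel segment $\ell_k=\ell_k(i,j)\subset\Lambda$ of length $|i_k-j_k|\le L$. The conjugation identity $(a\,c)=(b\,c)(a\,b)(b\,c)$, applied repeatedly, writes the transposition of $i$ and $j$ as a product of $2d-1$ transpositions, each exchanging the two endpoints of one of the segments $\ell_1,\dots,\ell_d$. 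Telescoping $F(\sigma)-F(\sigma^{ij})$ along this product gives
\[
  F(\sigma)-F(\sigma^{ij})=\sum_{l=1}^{2d-1}\bigl(F(\tau_l)-F(\tau_l^{\,q_{k(l)-1}q_{k(l)}})\bigr),
\]
where each intermediate configuration $\tau_l$ is obtained from $\sigma$ by a permutation that moves only the $d+1$ sites $q_0,\dots,q_d$, and $k(l)\in\{1,\dots,d\}$ records which segment is used at step $l$. By Cauchy--Schwarz, followed by a change of variables $\tau=\tau_l$ in each term — under which the density $\nu$ picks up a factor $\nu(\text{permuted }\tau)/\nu(\tau)$ that is bounded by $e^{C_d(\beta\bar A+\bar h)}$ uniformly, because only $d+1$ sites are moved and $\max_i\sum_j|A_{ij}|\le\bar A$, $\max_i|h_i|\le\bar h$ — I obtain
\[
  \E_\nu\bigl[(F(\sigma)-F(\sigma^{ij}))^2\bigr]\le C(d,\beta\bar A,\bar h)\sum_{k=1}^{d}\E_\nu\bigl[(F(\sigma)-F(\sigma^{\,q_{k-1}q_k}))^2\bigr].
\]

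Next I would apply Lemma~\ref{lem:mp} to each term on the right. Conditioning $\nu$ on all spins outside the line carrying $\ell_k$, the conditional law of the spins on that line is a one-dimensional canonical Ising model on an interval of length at most $L$, with couplings still bounded by $\bar A$, range $R$, and external field bounded by $\bar h+\beta\bar A$ (the coupling to the frozen outside spins contributes a bounded field since $\sum_j|A_{ij}|\le\bar A$). Lemma~\ref{lem:mp} for this conditional measure, followed by taking the outer expectation over the frozen spins, gives
\[
  \E_\nu\bigl[(F(\sigma)-F(\sigma^{\,q_{k-1}q_k}))^2\bigr]\le C(\beta\bar A,\bar h,R)\,L\sum_{\substack{a\sim b\\ [a,b]\subset\ell_k(i,j)}}\E_\nu\bigl[(F(\sigma)-F(\sigma^{ab}))^2\bigr].
\]

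Finally I would sum over all ordered pairs $(i,j)$ and exchange the order of summation. A nearest-neighbour bond $\{a,b\}$, say in direction $e_{k_0}$, can lie in $\ell_k(i,j)$ only for $k=k_0$, and a direct count shows this occurs for at most $C_dL^{d+1}$ pairs $(i,j)$: the coordinates of $i$ and of $j$ along directions $\ne k_0$ are pinned by $a,b$ up to $O(L^{d-1})$ choices, while the two coordinates along $e_{k_0}$ range over an interval containing $a_{k_0}$ and $b_{k_0}$, i.e.\ $O(L^2)$ choices. Combining the three displays,
\[
  \frac1{L^d}\sum_{i,j}\E_\nu\bigl[(F(\sigma)-F(\sigma^{ij}))^2\bigr]\le \frac{C(d,\beta\bar A,\bar h,R)\,L}{L^d}\,C_dL^{d+1}\sum_{a\sim b}\E_\nu\bigl[(F(\sigma)-F(\sigma^{ab}))^2\bigr],
\]
and the prefactor equals $C(d,\beta\bar A,\bar h,R)\,L^2$, which is the claim. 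I do not expect a genuine obstacle; the two mildly delicate points are those flagged above — routing the reduction through the $d+1$ distinguished vertices $q_k$ (rather than along a lattice path of length of order $L$) is what keeps the permutation-induced change of measure of order one, and the combinatorial count is what yields exactly the stated power $L^2$.
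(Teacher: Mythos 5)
Your proof is correct, and its skeleton is the same as the paper's: reduce a long-range exchange to nearest-neighbour exchanges via the one-dimensional moving particle lemma (Lemma~\ref{lem:mp}) applied to conditional measures, then count how many pairs $(i,j)$ use a given bond to extract the factor $L^2$ (your count of $C_dL^{d+1}$ pairs per bond, times the length factor $L$, matches the paper's path-counting bound). The one genuine difference is how the reduction to the 1D lemma is implemented. The paper takes, for each $(i,j)$, the single staircase path $\gamma_{ij}$ that moves coordinate by coordinate, conditions on the spins outside this path, and applies Lemma~\ref{lem:mp} once to the resulting one-dimensional canonical model along the whole (bent) path, noting parenthetically that the finite-range property survives with respect to the graph distance along the path. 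You instead factor the transposition $(i\,j)$ through the $d+1$ corner points $q_0,\dots,q_d$ using the conjugation identity, pay for the intermediate permutations with a Radon--Nikodym bound $e^{C_d(\beta\bar A+\bar h)}$ (legitimate, since only $d+1$ sites are moved), and then apply Lemma~\ref{lem:mp} only to straight axis-parallel segments. Your route costs an extra telescoping/change-of-measure step but each application of the 1D lemma is on a genuine straight line, so you never need the (mild but real) observation that the range condition transfers to path distance along a bent staircase; the paper's route is shorter but leans on exactly that observation. Both give the same constant structure $C(d,\beta\bar A,\bar h,R)L^2$.
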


\begin{proof}
  This is a standard consequence which we include for completeness and in preparation for the argument on random regular graphs.
  For each $i,j\in\Lambda$, let $\gamma_{ij}$ be the nearest-neighbour path from $i$ to $j$ that moves first in the first coordinate
  direction, then in the second, and so on.
  By conditioning on the spins outside this path,
  Lemma~\ref{lem:mp} can be applied to the one-dimensional Ising model along the path (which has the same range with respect to the graph distance).
  This gives
  \begin{equation}
    \E_{\nu}[(F(\sigma)-F(\sigma^{ij}))^2] \leq C(d,\beta\bar A,\bar h, R)|\gamma_{ij}| \sum_{k\ell \in\gamma_{ij}} \E_{\nu}[(F(\sigma)-F(\sigma^{k\ell}))^2],
  \end{equation}
  and  the assertion follows from the simple path counting argument
  \begin{equation}
    \max_e \frac{1}{L^d}\sum_{i,j} |\gamma_{ij}| 1_{e\in \gamma_{ij}} \leq C(d)L^2
  \end{equation}
  where $e$ runs over the nearest-neighbour edges of $\Lambda$.
\end{proof}

The next corollary applies the moving particle lemma to compare Dirichlet forms for the canonical Ising model
on a random regular graph.
We expect that the logarithmic factor in the statement is technical
since,
{\red for instance in the nearest-neighbour case,} the adjacency matrix of the random regular graph has a uniform spectral gap.
Our argument below effectively amounts to estimating this spectral gap using the path counting method which is too crude to
obtain the optimal estimate. 

\begin{corollary} \label{cor:mp-rrg}
  Let $d\geq 3$.  For the canonical Ising model on a random $d$-regular graph on $N$ vertices {\red under the assumptions $\max_i\sum_j|A_{ij}|\leq \bar A$, $\max_i|h_i|\leq \bar h$, and $A_{ij}=0$ if $\dist(i,j)>R$},
  \begin{equation}
    \frac{1}{N} \sum_{i,j}\E_{\nu}[(F(\sigma)-F(\sigma^{ij}))^2] \leq C(d,{\red \beta\bar A,\bar h, R})\log^4\!N \sum_{i \sim j} \E_{\nu}[(F(\sigma)-F(\sigma^{ij}))^2],
  \end{equation}
  with high probability under the randomness of the random regular graph. 
  The sum on the left again runs over all vertices $i$ and $j$ while the sum on the right runs over pairs of nearest neighbours
  {\red in the random regular graph.}
\end{corollary}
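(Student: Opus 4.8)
Just as in the lattice case (Corollary~\ref{cor:mp-Zd}), the idea is to choose for every pair of vertices $i,j$ a path $\gamma_{ij}$ in the random regular graph, apply the one-dimensional moving particle lemma (Lemma~\ref{lem:mp}) along this path after conditioning on the spins outside the path, and then control the resulting edge-multiplicities by a path-counting (flow) argument. The estimate we must produce is
\begin{equation}
  \max_{e} \frac{1}{N}\sum_{i,j}|\gamma_{ij}|\,{\bf 1}_{e\in\gamma_{ij}} \leq C(d)\log^4\!N,
\end{equation}
uniformly over edges $e$ of the graph, with high probability on the graph. Since the random $d$-regular graph has diameter $O(\log N)$ with high probability, we can take all paths $\gamma_{ij}$ to have length $|\gamma_{ij}| = O(\log N)$, so it suffices to bound the congestion $\max_e\frac1N\sum_{i,j}{\bf 1}_{e\in\gamma_{ij}}$ by $C(d)\log^3\!N$ (one factor of $\log N$ coming from $|\gamma_{ij}|$ itself, so in fact $C(d)\log^3 N$ is the congestion bound). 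One subtlety before this: Lemma~\ref{lem:mp} is stated for an interval in $\mathbb Z$, but its proof only uses that, after conditioning on the complement of the path, the restricted measure is a product Bernoulli measure conditioned on its sum with bounded marginals; the finite-range hypothesis enters only to guarantee that, once we thin the path to vertices at graph-distance $> R$ apart, consecutive thinned vertices are decoupled. On a random regular graph of girth growing like $\log_{d-1}N$, any geodesic-like path of length $O(\log N)$ has the property that vertices at distance $>R$ along it are also at distance $>R$ in the graph (for $N$ large, since $R$ is fixed), so the same argument applies verbatim along $\gamma_{ij}$ — this gives the one-dimensional comparison with constant $C(\beta\bar A,\bar h,R)|\gamma_{ij}|$.

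\textbf{The path system.} The natural choice is to route, for each ordered pair $(i,j)$, along a shortest path from $i$ to $j$; but shortest-path routing in expanders can overload a single edge (e.g.\ if many shortest paths happen to funnel through one edge near a low-expansion region). A cleaner route is a \emph{random} one: run two independent non-backtracking walks of length $\tfrac12(1+\epsilon)\log_{d-1}N$ from $i$ and from $j$; with high probability their endpoints lie in a common ball of radius $O(1)$ and one can splice them, producing a path $\gamma_{ij}$ of length $O(\log N)$. Averaging the indicator ${\bf 1}_{e\in\gamma_{ij}}$ over the randomness of the route and using that a single non-backtracking walk of length $\ell$ visits a given edge with probability $O(\ell/N)$ (because the walk equidistributes on scale $\ell \ll \log N$ over a set of size $\asymp (d-1)^{\ell}$, and $(d-1)^{\ell}\asymp \sqrt N$, giving visiting probability $O((d-1)^{-\ell}\ell)=O(\ell/\sqrt N)$ per step, hence $O(\ell^2/\sqrt N)$ total over the walk; the splice contributes only $O(1)$ extra edges near the endpoints) yields $\sum_{i,j}\mathbb E[{\bf 1}_{e\in\gamma_{ij}}] = N^2\cdot O(\log^2\!N/N) = O(N\log^2\!N)$, so the averaged congestion is $O(\log^2\!N)$ per edge. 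Finally one \emph{fixes} a good realisation of all the routes simultaneously: by a union bound over the $O(N)$ edges and a concentration/first-moment argument, with high probability on the graph there is a deterministic choice of $\{\gamma_{ij}\}$ with $\frac1N\sum_{i,j}{\bf 1}_{e\in\gamma_{ij}}\leq C(d)\log^3\!N$ for every edge $e$ (the extra $\log N$ absorbs the union bound and fluctuations). Combined with $|\gamma_{ij}| = O(\log N)$ this gives the weighted congestion bound $C(d)\log^4\!N$, which is exactly what the statement claims.

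\textbf{Putting it together and the main obstacle.} With the congestion bound in hand the corollary follows immediately: summing the one-dimensional moving particle inequality over all pairs $(i,j)$,
\begin{equation}
  \frac1N\sum_{i,j}\E_\nu[(F(\sigma)-F(\sigma^{ij}))^2]
  \leq \frac{C(\beta\bar A,\bar h,R)}{N}\sum_{i,j}|\gamma_{ij}|\!\!\sum_{k\ell\in\gamma_{ij}}\!\!\E_\nu[(F(\sigma)-F(\sigma^{k\ell}))^2]
  \leq C(d,\beta\bar A,\bar h,R)\log^4\!N\sum_{k\sim \ell}\E_\nu[(F(\sigma)-F(\sigma^{k\ell}))^2].
\end{equation}
I expect the main obstacle to be the \emph{simultaneous} control of congestion over all edges and all pairs: one needs a concentration statement strong enough to turn the easy first-moment bound $\mathbb E[\text{congestion}] = O(\log^2\!N)$ into a deterministic bound holding for every one of the $\asymp dN$ edges at once, on a high-probability set of graphs, and one must be careful that the exceptional event for the local tree-like structure (small girth, atypical degree sequences of neighbourhoods) does not interact badly with the route construction. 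This is where the extra logarithmic factors are lost and where the path-counting method is, as the authors remark, too crude to see the true $O(1)$ spectral gap of the adjacency matrix; a sharper argument would bypass path counting and use the spectral gap directly, but the present statement only needs the polylogarithmic bound, so the crude union bound suffices.
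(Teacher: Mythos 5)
Your overall skeleton (route each pair along a path, apply Lemma~\ref{lem:mp} along the path after conditioning outside it, then bound the weighted congestion $\max_e \frac1N\sum_{i,j}|\gamma_{ij}|{\bf 1}_{e\in\gamma_{ij}}$ by $C(d)\log^4\!N$ using the $O(\log N)$ diameter) is exactly the right one, but the specific route you chose has genuine gaps. First, replacing geodesics by spliced non-backtracking walks breaks the step where Lemma~\ref{lem:mp} is applied: to use the one-dimensional lemma along $\gamma_{ij}$ one needs that vertices far apart \emph{along the path} are far apart \emph{in the graph} (so the conditioned model has the same range $R$ with respect to the path coordinate), and this is automatic only for geodesics. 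You justify it for your random paths by asserting that a random $d$-regular graph has girth of order $\log_{d-1}N$ with high probability; that is false --- the number of cycles of any fixed length is asymptotically Poisson with constant mean, so the girth is bounded with high probability, and a spliced walk may pass close to itself. Second, the splicing construction itself is shaky: two independent length-$\frac12(1+\epsilon)\log_{d-1}N$ walks from $i$ and $j$ do \emph{not} end within $O(1)$ of each other with high probability for a single attempt (the endpoints are roughly uniform on sets of size $N^{(1+\epsilon)/2}$), so the existence of the routes needs a different argument. Third, the step you yourself flag as the main obstacle --- upgrading the first-moment congestion bound to a simultaneous bound over all $\asymp dN$ edges --- is left unproved, and it is not a routine union bound since the events are not independent across pairs.

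All of this machinery is unnecessary at the polylogarithmic precision the statement requires, and the paper's proof shows why your worry about geodesic routing "overloading an edge" is moot here: fix an arbitrary geodesic $\gamma_{ij}$ for each pair (geodesics make the conditioning step work with no girth input), and bound the congestion of a fixed edge $e$ deterministically by the number of simple paths through $e$ of length at most the diameter $D$. There are at most $k(d-1)^k$ simple paths of length $k+1$ through $e$, so the count is at most $D^2(d-1)^D$, and with the diameter bound $D\le \log_{d-1}N+\log_{d-1}\log N+C$ of \eqref{e:rrg-diameter} this gives $\frac1{2N}\sum_{i,j}|\gamma_{ij}|{\bf 1}_{e\in\gamma_{ij}}\le \frac{1}{N}D^3(d-1)^D\le C_d\log^4\!N$ for \emph{every} edge simultaneously, with no concentration argument at all. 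So the fix is to drop the random routing and the girth claim, take geodesics, and use this worst-case path count exactly as in Corollary~\ref{cor:mp-Zd} adapted to the random graph.
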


\begin{proof}
  For each pair of vertices $i,j \in [N]$, fix some geodesic nearest-neighbour path $\gamma_{ij}$ of length $|\gamma_{ij}|$ between $i$ and $j$.
  In particular, $\gamma_{ij} \neq \gamma_{k\ell}$ if $\{i,j\}\neq \{k,\ell\}$
  and the canonical Ising model conditioned on the spins outside $\gamma_{ij}$ again has the same range 
  as the original one since $\gamma_{ij}$ is a geodesic.
  Therefore it suffices to bound
  \begin{equation}
    \max_e \frac{1}{2N} \sum_{i,j} |\gamma_{ij}| 1_{e\in \gamma_{ij}}.
  \end{equation}
  For every edge $e$, one has the trivial bound
  \begin{equation}
    \frac12 \sum_{i,j} 1_{e\in\gamma_{ij}} \leq \text{number of simple paths containing $e$ of length at most $D$},
  \end{equation}
  where $D$ is the diameter of the graph.
  It is known \cite[Theorem 10.14]{MR1864966} that the diameter of a random regular graph satisfies (again with high probability)
  \begin{equation} \label{e:rrg-diameter}
    D \leq \log_{d-1} N + \log_{d-1} \log N + C.
  \end{equation}
  There are at most $(d-1)^k$ simple paths of length $k+1$ containing $e$ at the $m$-th position, $1\leq m \leq k$.
  Therefore there are at most $k(d-1)^k$ simple paths of length $k+1$ containing $e$ and the above sum is
  crudely bounded by
  \begin{equation}
    \sum_{k=1}^{D-1} k(d-1)^k \leq D^2 (d-1)^{D}.
  \end{equation}
  Using \eqref{e:rrg-diameter} gives
  \begin{equation}
    \frac{1}{2N} \sum_{i,j} |\gamma_{ij}| 1_{e\in \gamma_{ij}}
    \leq
    \frac{D}{2N} \sum_{i,j} 1_{e\in \gamma_{ij}}
    \leq
    \frac{1}{N} D^3 (d-1)^D \leq C_d \log^4\!N
  \end{equation}
  and the claim follows from Lemma~\ref{lem:mp}.
\end{proof}

\appendix 
\section{Proof of Theorem~\ref{thm:hs-lsi}}
\label{app:downup}

In this appendix, we include an alternative version of the proof of Theorem~\ref{thm:hs-lsi}, 
i.e., we prove:
\begin{equation}
\ent_{\pi}(F)
\leq 
D^{\mathrm{du}}_{\pi}(F,\log F),
\qquad 
F:\Omega_{N,m}\to\R_+
,
\label{eq_mLSI_appendix}
\end{equation}
{\red where $\pi$ is any probability measure on $\Omega_{N,m}$ with log-concave polynomial $g_{\pi}$, see Proposition~\ref{prop_log_concave}.
Above, }$D^{\mathrm{du}}_\pi$ is the Dirichlet form of the bases-exchange dynamics (down-up walk) defined in \eqref{e:downup}.  
It is convenient to see $+$ spins as particles: we write $\Omega_{N,k}$ for $\Omega_{N,m}$ with $k=N(1+m)/2$ and
\begin{equation} \label{e:app-I-def}
  I{\red(\sigma)} =\{i \in [N]: \sigma_i = +1\}.
\end{equation}

Our proof below uses the same entropy contraction argument as in~\cite{MR4203344,2106.04105}. 
A main difference in our presentation 
is that we decouple this entropy contraction argument from the specific dynamics. 
In particular, we explain how to deduce log-Sobolev inequalities (rather than modified log-Sobolev) and modified log-Sobolev inequalities with respect to other dynamics.

\subsection{Entropic independence estimate}

The key ingredient 
is the following contraction estimate, referred to as entropic independence \cite{2106.04105}:
For a probability measure $\pi$ on $\Omega_{N,k}$, introduce the probability vector in $[0,1]^N$
(equivalently viewed as a probability distribution on $\Omega_{N,1}$):
\begin{equation}
\pi D_{k\to 1}
= \frac{1}{k}(\E_\pi[{\bf 1}_{\sigma_i=1}])_{i\in[N]}
.
\label{eq_def_D_kto1}
\end{equation}
The $1$-entropic independence of $\pi$ is the property that, for any probability measure $\mu$ on $\Omega_{N,k}$:
\begin{equation}
\mathbb H\big(\mu D_{k\to 1}|\pi D_{k\to 1})
\leq 
\frac{1}{k} \mathbb H\big(\mu |\pi )
,
\label{eq_entropic_independence_appendix}
\end{equation}
where $\mathbb H(\mu_1|\mu_2)$ is the relative entropy 
between two probability measures $\mu_1,\mu_2$.

\begin{proposition}\label{prop_log_concave}
  If $g_\pi(z)=\E_\pi[z^I]$  is log-concave on $(0,\infty)^N$ then \eqref{eq_entropic_independence_appendix} holds (where $z^I = \prod_{i\in I}z_i$
  and $I$ was defined in \eqref{e:app-I-def}).
\end{proposition}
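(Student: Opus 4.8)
The plan is to reduce the entropic independence inequality~\eqref{eq_entropic_independence_appendix} to a single inequality for the generating polynomial $g_\pi$ by applying the Gibbs variational principle for relative entropy, and then to establish that inequality from the log-concavity of $g_\pi$ together with its homogeneity.

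\emph{Step 1 (reduction).} Since $\pi$ is a probability measure, $g_\pi({\bf 1})=1$, and differentiating at ${\bf 1}$ gives $\tfrac1k\nabla g_\pi({\bf 1})=\tfrac1k(\P_\pi[i\in I])_{i\in[N]}=\pi D_{k\to 1}=:p$, a probability vector on $[N]$ because $\E_\pi[|I|]=k$. Writing $q=\mu D_{k\to 1}$, I would apply the Gibbs variational formula $\mathbb H(\mu|\pi)=\sup_f\{\E_\mu[f]-\log\E_\pi[e^f]\}$ to the linear exponents $f(I)=\sum_{i\in I}\theta_i$ with $\theta\in\R^N$; using $\E_\pi[e^{\sum_{i\in I}\theta_i}]=g_\pi(e^\theta)$, where $e^\theta=(e^{\theta_1},\dots,e^{\theta_N})$, this gives
\[
  \tfrac1k\,\mathbb H(\mu|\pi)\ \geq\ \sup_{\theta\in\R^N}\Big(\langle\theta,q\rangle-\tfrac1k\log g_\pi(e^\theta)\Big).
\]
The same formula applied to the probability vectors $q,p$ on $[N]$ reads $\mathbb H(q|p)=\sup_\theta\big(\langle\theta,q\rangle-\log\sum_i p_ie^{\theta_i}\big)$. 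Comparing the two suprema term by term in $\theta$, the inequality~\eqref{eq_entropic_independence_appendix} then reduces to the polynomial bound
\[
  g_\pi(z)\ \leq\ \langle p,z\rangle^{\,k}\qquad\text{for all }z\in(0,\infty)^N,\qquad p=\tfrac1k\nabla g_\pi({\bf 1}),
\]
equivalently $\tfrac1k\log g_\pi(e^\theta)\le\log\sum_i p_ie^{\theta_i}$ for all $\theta$. (For the uniform matroid, where $g_\pi$ is a normalised elementary symmetric polynomial, this is exactly Maclaurin's inequality.)

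\emph{Step 2 (the polynomial bound).} This is where log-concavity enters. Put $h=\log g_\pi$, concave on $(0,\infty)^N$ by hypothesis, with $h({\bf 1})=0$ and $\nabla h({\bf 1})=\nabla g_\pi({\bf 1})=kp$. Fix $z>0$, set $s=\langle p,z\rangle>0$ and $\bar z=z/s$, so that $\langle p,\bar z\rangle=1$; by homogeneity of degree $k$, $h(z)=k\log s+h(\bar z)$. Now ${\bf 1}$ and $\bar z$ both lie in $(0,\infty)^N$ and on the affine hyperplane $H=\{w:\langle p,w\rangle=1\}$ (note $\langle p,{\bf 1}\rangle=\sum_i p_i=1$), and $\nabla h({\bf 1})=kp$ is orthogonal to the direction space of $H$, so ${\bf 1}$ is a stationary point of $h|_H$ and, $h$ being concave, a global maximiser of $h$ on $H\cap(0,\infty)^N$. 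Concretely, the tangent-plane inequality for concave $h$ gives $h(\bar z)\le h({\bf 1})+\langle\nabla h({\bf 1}),\bar z-{\bf 1}\rangle=k(\langle p,\bar z\rangle-1)=0$, whence $h(z)\le k\log s$, i.e.\ $g_\pi(z)\le\langle p,z\rangle^{\,k}$. Feeding this back into Step 1 completes the argument.

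\emph{Main obstacle.} The only place log-concavity is used is the polynomial bound of Step 2, and I expect the short ``stationary point on a hyperplane, then rescale by homogeneity'' argument to be the crux; the bookkeeping in Step 1 (that restricting the variational principle to linear exponents loses nothing, and that the degenerate cases $\mathbb H(\mu|\pi)=\infty$ or $p_i=0$ are harmless) should be routine. This proposition then feeds into the entropy-contraction argument that follows to deliver the modified log-Sobolev inequality~\eqref{eq_mLSI_appendix}.
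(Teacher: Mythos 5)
Your argument is correct and takes essentially the same route as the paper: both reduce \eqref{eq_entropic_independence_appendix} via the Gibbs variational principle (with multiplicative/linear tilts $e^G=z^I$) to the polynomial bound $\tfrac1k\log g_\pi(z)\le\log\langle p,z\rangle$, the paper simply plugging in the optimal tilt $z_i=q_i/p_i$ where you take a supremum over $\theta$. The only substantive difference is that you also supply the short homogeneity-plus-tangent-plane proof of that polynomial bound, which the paper states as a direct consequence of log-concavity and attributes to \cite{2106.04105}.
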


\begin{proof}
  This is shown in \cite[Section 3]{2106.04105}.
  Let $p=\pi D_{k\to 1}$ and $q=\mu D_{k\to 1}$.
  Then log-concavity implies
  \begin{equation}
    \frac{1}{k}\log g_\pi(z) \leq \log \qa{\sum_i p_i z_i}.
  \end{equation}
  The entropy inequality with $e^G=z^I$ implies, for any $z\in (0,\infty)^N$,
  \begin{equation}
    \bbH(\mu|\pi) =\sup_G \Big\{ \E_\mu[G]-\log \E_\pi[e^{G}] \Big\}
    \geq \E_\mu[\log z^I]  - \log \E_\pi[z^I]
    = k \sum_i q_i \log z_i - \log  g_\pi(z).
  \end{equation}
  Combining both estimates with the choice $z_i=q_i/p_i$ yields the claim:
  \begin{equation}
    \bbH(\mu|\pi) \geq
    k \sum_i q_i \log z_i  - k \log \Big(\sum_i p_iz_i\Big)
    =
    k\sum_i q_i \log (q_i/p_i) = k\bbH(q|p).\qedhere
  \end{equation}
\end{proof}

\subsection{Entropy contraction}

We decompose $\pi$ in terms of the number of particles as follows. 
For each $0\leq \ell\leq k$, 
{\red define $\Omega^\pi_{k-\ell}$ as the set of all configurations with $k-\ell$ particles that are compatible with $\pi$:
\begin{equation}
\Omega^\pi_{k-\ell}
=
\Big\{\varphi\in \Omega_{N,k-\ell}: \text{ there is }\sigma\in\text{supp}(\pi)\subset\Omega_{N,k}\text{ such that }\sigma\geq \varphi\Big\}
,
\end{equation}
where $\sigma\geq \varphi$ means $\sigma_i\geq \varphi_i$ for each $i\in[N]$. 
Define then $\nu_\ell$ as the following probability measure on $\varphi \in \Omega^\pi_{k-\ell}$:}
\begin{equation}
\nu_\ell(\varphi)
=
\frac{1}{\binom{k}{\ell}}\pi(\sigma\geq \varphi)
.
\end{equation}
{\red Note that $\nu_{\ell}$ is indeed a probability measure, 
as exactly $\binom{k}{\ell}$ different $\varphi\in\Omega^\pi_{k-\ell}$ satisfy $\sigma\geq \varphi$ for a given $\sigma\in\text{supp}(\pi)$, 
corresponding to all configurations obtained by removing $\ell$ particles from $\sigma$.}
Let also $\mu_\ell=\mu_\ell^\varphi$ be the measure $\pi$ on $\Omega_{N,k}$ conditioned on $\sigma \geq \varphi$:
\begin{equation}
\mu_\ell^\varphi(\sigma)
=
\pi(\sigma | \sigma \geq \varphi)
.
\end{equation}
Then we have  a decomposition analogous to \eqref{eq_decomp_Ising_t}:
for any $0\leq \ell \leq k$,
\begin{equation} \label{eq_decomp_pi}
  \E_\pi[F]=\E_{\nu_\ell}[\E_{\mu_\ell^\varphi}[F]].
\end{equation}
Let $P_{\ell,\ell+1}= P_{\ell,\ell+1}(\psi,\cdot)$ be the probability measure on $\Omega^\pi_{k-\ell}$ that adds a particle to $\psi \in\Omega^\pi_{k-(\ell+1)}$ 
{\red in a way compatible with $\pi$}:
\begin{equation}
  \E_{\nu_{\ell}}[F(\varphi)] = \E_{\nu_{\ell+1}}[P_{\ell,\ell+1}F(\psi)],
  \qquad
  P_{\ell,\ell+1}\E_{\mu_\ell}[F]
  =
  \E_{\mu_{\ell+1}}[F]
  .
  \label{eq_prop_P_ell_ell+1}
\end{equation}
Explicitly, for $\psi \in \Omega^\pi_{k-  (\ell+1) }$:
\begin{equation}
  P_{\ell,\ell+1}F(\psi) = \frac{1}{\ell+1}\sum_{i\in[N]} \frac{\pi(\sigma\geq \psi+{\bf 1}_i)}{\pi(\sigma\geq \psi)} 1_{{\red \psi + {\bf 1}_i\in \Omega^\pi_{k-\ell}}} F(\psi+1_i)
  ,
  \label{eq_def_P_ell_ell+1}
\end{equation}
and $P_{\ell,\ell+1}(\psi,\cdot)$ can equivalently be seen as a probability measure $(P^\psi_{\ell,\ell+1}(i))_i$ on $[N]$: 
\begin{equation}
P^\psi_{\ell,\ell+1}(i)
=
P_{\ell,\ell+1}(\psi,\psi+{\bf 1}_i)
=
{\bf 1}_{{\red \psi + {\bf 1}_i\in \Omega^\pi_{k-\ell}}}\frac{1}{\ell+1}\frac{\pi(\sigma\geq \psi+{\bf 1}_i)}{\pi(\sigma\geq \psi)} 
.
\end{equation}
{\red E.g. the right-hand side of Equation~\eqref{eq_prop_P_ell_ell+1} then indeed holds since:}
\begin{align}
P_{\ell,\ell+1}\E_{\mu_{\ell}}[F]
&:=
\frac{1}{\ell+1}\sum_{j\in [N]}\frac{\pi(\sigma\geq \psi+{\bf 1}_j)}{\pi(\sigma\geq \psi)}{\bf 1}_{{\red \psi + {\bf 1}_j\in \Omega^\pi_{k-\ell}}}\E_{\pi(\cdot|\sigma\geq\psi+{\bf 1}_j)}[F]
\nnb
&=
\frac{1}{\ell+1}\sum_{j\in [N]}{\bf 1}_{{\red \psi + {\bf 1}_j\in \Omega^\pi_{k-\ell}}}\E_{\pi(\cdot|\sigma\geq\psi)}[F{\bf 1}_{\sigma_j=1}]
\nnb
&=
\E_{\pi(\cdot|\sigma\geq\psi)}[F]
=:
\E_{\mu_{\ell+1}^\psi}[F]
,
\end{align}
where the third line comes from the fact that, 
{\red for each $\sigma\in\Omega_{N,k}$ with $\sigma\geq \psi$, 
there are $\ell+1$ sites at which to add a particle to $\psi$ that yield a configuration in $\Omega^\pi_{k-\ell}$.}

\begin{proposition}\label{prop_entropy_contraction}
Assume that $\pi$ has log-concave generating polynomial $g_\pi$. 
Then, for any $1 \leq p\leq k$,
\begin{equation}
\ent_{\pi}(F)
=\E_{\nu_k}[\ent_{\mu_k}(F)]
\leq 
\frac{k}{p}\E_{\nu_{p}}[\ent_{\mu_{p}}(F)]
.
\label{eq_accumulated_error_appendix}
\end{equation}
\end{proposition}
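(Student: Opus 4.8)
The plan is to reduce the telescoped estimate \eqref{eq_accumulated_error_appendix} to a single one-step entropy contraction, and then iterate it. Write $E_\ell:=\E_{\nu_\ell}[\ent_{\mu_\ell}(F)]$ for $0\le\ell\le k$. Since $\Omega^\pi_0$ consists of the single empty configuration, $\nu_k$ is the point mass there and $\mu_k=\pi$, so that $E_k=\ent_\pi(F)$, and \eqref{eq_accumulated_error_appendix} is exactly the statement $E_k\le(k/p)\,E_p$. It therefore suffices to prove the one-step bound
\begin{equation}
  E_{\ell+1}\le\frac{\ell+1}{\ell}\,E_\ell\qquad(1\le\ell\le k-1),
\end{equation}
since chaining these inequalities from $\ell=p$ up to $\ell=k-1$ telescopes to $E_k\le\frac{k}{k-1}\cdot\frac{k-1}{k-2}\cdots\frac{p+1}{p}\,E_p=\frac{k}{p}\,E_p$.

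To prove the one-step bound, fix $\ell$ and a configuration $\psi\in\Omega^\pi_{k-(\ell+1)}$. The conditioned measure $\mu_{\ell+1}^\psi=\pi(\cdot\mid\sigma\ge\psi)$ has exactly $\ell+1$ free particles (the $k-(\ell+1)$ particles of $\psi$ being frozen), its own conditionals are $\pi(\cdot\mid\sigma\ge\psi+{\bf 1}_i)=\mu_\ell^{\psi+{\bf 1}_i}$, and, comparing \eqref{eq_def_D_kto1} with \eqref{eq_def_P_ell_ell+1}, the probability vector $P^\psi_{\ell,\ell+1}$ is precisely the one-particle down-projection $\mu_{\ell+1}^\psi D_{(\ell+1)\to1}$, i.e.\ the law of one uniformly chosen free particle under $\mu_{\ell+1}^\psi$. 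Using the mixture identity $\mu_{\ell+1}^\psi=\sum_i P^\psi_{\ell,\ell+1}(i)\,\mu_\ell^{\psi+{\bf 1}_i}$ together with the definition \eqref{e:ent-def} of $\ent$, I would first record the chain rule
\begin{equation}
  \ent_{\mu_{\ell+1}^\psi}(F)=\sum_i P^\psi_{\ell,\ell+1}(i)\,\ent_{\mu_\ell^{\psi+{\bf 1}_i}}(F)+\ent_{P^\psi_{\ell,\ell+1}}(g),\qquad g(i):=\E_{\mu_\ell^{\psi+{\bf 1}_i}}[F].
\end{equation}

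The key point is that the error term $\ent_{P^\psi_{\ell,\ell+1}}(g)$ is exactly what entropic independence controls. For the tilted probability measure $\mu:=F\mu_{\ell+1}^\psi/\E_{\mu_{\ell+1}^\psi}[F]$, one checks that the density of $\mu D_{(\ell+1)\to1}$ with respect to $P^\psi_{\ell,\ell+1}=\mu_{\ell+1}^\psi D_{(\ell+1)\to1}$ at a site $i$ equals $\E_{\mu_{\ell+1}^\psi}[F\mid\sigma_i=1]/\E_{\mu_{\ell+1}^\psi}[F]$, which is $g(i)$ up to the same normalising constant; hence $\bbH(\mu\mid\mu_{\ell+1}^\psi)$ and $\bbH(\mu D_{(\ell+1)\to1}\mid\mu_{\ell+1}^\psi D_{(\ell+1)\to1})$ equal $\ent_{\mu_{\ell+1}^\psi}(F)$ and $\ent_{P^\psi_{\ell,\ell+1}}(g)$ respectively, each divided by $\E_{\mu_{\ell+1}^\psi}[F]$. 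Now the generating polynomial of $\mu_{\ell+1}^\psi$ is again log-concave — conditioning on coordinates being equal to $1$ preserves this property (it corresponds to a contraction of the matroid in the matroid case, and to conditioning of a strong Rayleigh measure in the Ising case), so Proposition~\ref{prop_log_concave} applies to $\mu_{\ell+1}^\psi$, a measure with $\ell+1$ particles, with constant $1/(\ell+1)$. This yields, after the common normalising factor cancels, $\ent_{P^\psi_{\ell,\ell+1}}(g)\le\frac1{\ell+1}\ent_{\mu_{\ell+1}^\psi}(F)$. Substituting into the chain rule gives $\sum_i P^\psi_{\ell,\ell+1}(i)\,\ent_{\mu_\ell^{\psi+{\bf 1}_i}}(F)\ge\frac{\ell}{\ell+1}\ent_{\mu_{\ell+1}^\psi}(F)$ for every $\psi$; averaging over $\psi$ distributed as $\nu_{\ell+1}$ and using the intertwining $\E_{\nu_{\ell+1}}[P_{\ell,\ell+1}G]=\E_{\nu_\ell}[G]$ from \eqref{eq_prop_P_ell_ell+1} to recognise the left-hand side as $E_\ell$ gives $E_\ell\ge\frac{\ell}{\ell+1}E_{\ell+1}$, which is the required one-step contraction.

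I expect the only genuine work to be in the third step: cleanly identifying $P^\psi_{\ell,\ell+1}$ with the one-particle down-projection of the conditioned measure $\mu_{\ell+1}^\psi$ and the entropy error term with the matching relative entropy, and verifying that log-concavity of the generating polynomial is preserved under the conditioning $\sigma\ge\psi$, so that Proposition~\ref{prop_log_concave} can be applied at each level with the sharp constant $1/(\ell+1)$. Once these points are in place, the chain rule, the $\nu_{\ell+1}$-averaging, and the telescoping are all routine.
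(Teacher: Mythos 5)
Your proposal is correct and follows essentially the same route as the paper: the mixture/chain-rule decomposition of $\ent_{\mu_{\ell+1}^\psi}(F)$, the identification of the error term $\ent_{P^\psi_{\ell,\ell+1}}(g)$ with $\E_{\mu_{\ell+1}^\psi}[F]\,\bbH\big(\mu_{\ell+1}^{\psi,F}D_{\ell+1\to1}\,\big|\,\mu_{\ell+1}^{\psi}D_{\ell+1\to1}\big)$, and the application of the entropic independence of Proposition~\ref{prop_log_concave} to the conditioned (still log-concave) measure $\mu_{\ell+1}^\psi$ with constant $1/(\ell+1)$ are exactly the paper's estimate \eqref{eq_loss_removing_one_particle}, and your level-by-level telescoping of $E_{\ell+1}\le\frac{\ell+1}{\ell}E_\ell$ reproduces the paper's iteration. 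No gaps.
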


\begin{proof}
The first equality in \eqref{eq_accumulated_error_appendix} follows from the observation that $\mu_k=\pi$.
The inequality in \eqref{eq_accumulated_error_appendix} will be derived by reducing recursively the number of particles and using the contraction estimate~\eqref{eq_entropic_independence_appendix} which provides a bound on the loss of removing a particle.
Writing $\ent_{\ell,\ell+1}$ for the entropy associated with $P_{\ell,\ell+1}$, we are going to prove that:
\begin{equation}
\ent_{\ell,\ell+1}(\E_{\mu_{\ell}}[F])
\leq 
\frac{1}{\ell+1} \ent_{\mu_{\ell+1}}(F)
.
\label{eq_loss_removing_one_particle}
\end{equation}
Admitting~\eqref{eq_loss_removing_one_particle}, 
let us conclude the proof of \eqref{eq_accumulated_error_appendix}. 
Decomposing the measure using~\eqref{eq_decomp_pi} with $\ell=k-1$, 
noticing that $\nu_{k-1}(\cdot)=P_{k-1,k}(-1,\cdot)$ with $-1$ the configuration with no particle and using~\eqref{eq_loss_removing_one_particle},
\begin{align}
\ent_{\pi}(F)
&=
\E_{\nu_{k-1}}[\ent_{\mu_{k-1}}(F)]
+\ent_{\nu_{k-1}}\big(\E_{\mu_{k-1}}[F]\big)
\nnb
&\leq 
\E_{\nu_{k-1}}[\ent_{\mu_{k-1}}(F)]
+\frac{1}{k}
\ent_{\mu_k}(F)
.
\end{align}
As $\mu_k=\pi$, we get:
\begin{equation}
\ent_{\pi}(F)
\leq 
\frac{k}{k-1}\E_{\nu_{k-1}}[\ent_{\mu_{k-1}}(F)]
.
\label{eq: intermediaire A 19}
\end{equation}
 One can then analogously {\red use  \eqref{eq_prop_P_ell_ell+1} to decompose $\mu_{k-1} = P_{k-2,k-1} \mu_{k-2}$ and get by \eqref{eq_loss_removing_one_particle}
 \begin{equation}
\ent_{\mu_{k-1}}(F) - P_{k-2,k-1} [\ent_{\mu_{k-2}}(F)]
=  \ent_{k-2,k-1} (\E_{\mu_{k-2}}[F]) \leq 
 \frac{1}{k-1} \ent_{\mu_{k-1}}(F)
;
\end{equation}
from which we obtain by \eqref{eq: intermediaire A 19}:}
\begin{equation}
\ent_{\pi}(F)
\leq 
\frac{k}{k-1}\cdot \frac{k-1}{k-2}\E_{\nu_{k-2}}[\ent_{\mu_{k-2}}(F)]
= \frac{k}{k-2}\E_{\nu_{k-2}}[\ent_{\mu_{k-2}}(F)]
.
\label{eq_accumulated_error_appendix-pf}
\end{equation}
Iterating this procedure $p$ times proves the proposition assuming~\eqref{eq_loss_removing_one_particle}, established next. 

Fix $\psi \in{\red \Omega^\pi_{k-(\ell+1)}}$. 
By definition of $\mu^\varphi_\ell = \pi(\cdot|\sigma\geq \varphi)$ ($\varphi\in{\red \Omega^\pi_{k-\ell}}$) and $P_{\ell,\ell+1}{\red (\psi,\cdot)}$ (see~\eqref{eq_def_P_ell_ell+1}),
\begin{align}
\ent_{\ell,\ell+1}(\E_{\mu_{\ell}}[F])
=
\sum_{i\in[N]}
\E_{\mu_\ell^{\psi+{\bf 1}_i}}[F]\log \Big(\frac{\E_{\mu_\ell^{\psi+{\bf 1}_i}}[F]}{P_{\ell,\ell+1}\E_{\mu_{\ell}}[F]}\Big)P_{\ell,\ell+1}^\psi(i)
.
\label{eq_entropy_P_ell_ell+1}
\end{align} 
Recall that $P_{\ell,\ell+1}\E_{\mu_\ell}[F] = \E_{\mu_{\ell+1}}[F]$ and notice:
\begin{equation}
\E_{\mu_\ell^{\psi+{\bf 1}_i}}[F] 
=
\frac{\E_{\pi(\cdot|\sigma\geq \psi)}[{\bf 1}_{\sigma_i=1}F]}{\E_{\pi(\cdot|\sigma\geq \psi)}[{\bf 1}_{\sigma_i=1}]}
=
\frac{\E_{\mu_{\ell+1}^{\psi}}[{\bf 1}_{\sigma_i=1}F]}{\E_{\mu_{\ell+1}^{\psi}}[{\bf 1}_{\sigma_i=1}]}
.
\end{equation}
With $D_{\ell+1\to 1}$ defined in~\eqref{eq_def_D_kto1}, we find:
\begin{align}
\ent_{\ell,\ell+1}(\E_{\mu_{\ell}}[F])
&=
\frac{1}{\ell+1}\sum_{i\in[N]}{\bf 1}_{\psi_i= - 1}\E_{\mu_{\ell+1}^\psi}[F{\bf 1}_{\sigma_i=1}] \log\Bigg(\frac{\E_{\mu_{\ell+1}^{\psi,F}}[{\bf 1}_{\sigma_i=1}]}{\E_{\mu_{\ell+1}^{\psi}}[{\bf 1}_{\sigma_i=1}]}\Bigg)
\nnb
&= 
\E_{\mu^{\psi}_{\ell+1}}[F]\mathbb H\Big(\mu_{\ell+1}^{\psi,F}D_{\ell+1\to 1}\Big|\mu_{\ell+1}^{\psi}D_{\ell+1\to 1}\Big)
,
\end{align}
where $\mu_{\ell+1}^{\psi,F} = F\mu_{\ell+1}^{\psi}/\E_{\mu_{\ell+1}^\psi}[F]$. 
The measure $\mu_{\ell+1}^{\psi}$ is strongly log-concave as it is just $\pi$ conditioned on the position of some particles. 
The $1$-entropic independence~\eqref{eq_entropic_independence_appendix} therefore gives~\eqref{eq_loss_removing_one_particle}:
\begin{equation}
\ent_{\ell,\ell+1}(\E_{\mu_{\ell}}[F])
\leq 
\frac{1}{\ell+1}\E_{\mu^{\psi}_{\ell+1}}[F]\, \mathbb H\Big(\mu_{\ell+1}^{\psi,F}\Big|\mu_{\ell+1}^{\psi}\Big)
=
\frac{1}{\ell+1}\ent_{\mu^\psi_{\ell+1}}[F]
.
\qedhere
\end{equation}
\end{proof}

\subsection{Bases-exchange dynamics}
The bases-exchange dynamics is very particular in the sense that it is designed to be compatible with the decomposition in terms of the 
number of particles. 
This can be understood from the following two observations. 
First, the bases-exchange jump rates are invariant under conditioning on the position of particles. 
Indeed, let $E\subset[N]$ with $|E|\leq  k-2$, 
let $(i,j)\in ([N]\setminus E)^2$ with $i\neq j$. 
Write $\pi^E$ for the conditional measure $\pi(\cdot|\sigma_\ell=1\text{ for }\ell\in E)$ 
and let $\sigma\in\Omega_{N,k}$ satisfy $\sigma_i=1$ and $\sigma_\ell =1$ for $\ell \in E$.
Then, {\red recalling that $J_i(\sigma)$ denotes the set of sites where a particle can be added to $\sigma$ to after removing $i$ in order to obtain a configuration in the support of $\pi$:}
\begin{align}
{\red c_\pi^{\mathrm{du}}} (\sigma,\sigma^{ij}) 
&=
\frac{\pi(\sigma^{ij})}{\sum_{k\in {\red J_i(\sigma)}}\pi(\sigma^{ik})}
=
\frac{\pi^E(\sigma^{ij})}{\sum_{k\in {\red J_i(\sigma)}}\pi^E(\sigma^{ik})}
=
{\red c_{\pi^E}^{\mathrm{du}}}   (\sigma,\sigma^{ij}) 
.
\label{eq_conditioned_jp_rates}
\end{align}
This will be used to relate the jump rates for $\pi$ and $\pi$ conditional on having $+$ spins in a certain set.

The second property is the following. 
Let $\tilde P$ denote the {\red transition matrix} of the bases-exchange dynamics~\eqref{eq_mLSI_appendix}, i.e., with jump rates ${\red \tilde P}(\sigma,\sigma')=k^{-1} {\red c_\pi^{\mathrm{du}}} (\sigma,\sigma')$ (the $k^{-1}$ ensures that $\tilde P$ is a stochastic matrix). 
It has the important property that it can be decomposed as the following product of stochastic matrices:
\begin{equation}
\tilde P 
=
D_{k\to k-1}U_{k-1\to k}
,
\end{equation}
where $D_{k\to k-1}$ is the operation that removes one particle uniformly at random, and $U_{k-1\to k}$ adds a particle with probability relative to $\pi$:
\begin{align}
&D_{k\to k -1}: (\sigma,\sigma') \in\Omega_{N,k}\times{\red \Omega^\pi_{k-1}}
\longmapsto 
\frac{1}{k}{\bf 1}\Big\{\sigma' = \sigma^i\text{ for some } i\in I(\sigma)\Big\},
\nnb
&U_{k-1\to k}: (\sigma',\sigma) \in{\red\Omega^\pi_{k-1}}\times\Omega_{N,k}
\longmapsto 
{\bf 1}_{{\red \sigma\geq\sigma'}} \frac{\pi(\sigma)}{\sum_{{\red i:(\sigma' + {\bf 1}_i)\in\text{supp}(\pi)}}\pi( {\red \sigma' + {\bf 1}_i })}
.
\end{align}

\begin{lemma} \label{lem:app-two-particle}
  The modified log-Sobolev inequality of Theorem~\ref{thm:hs-lsi} holds for $k=2$:
  \begin{equation}
    \ent_{\pi}(F) \leq D^{\mathrm{du}}_{\pi}(F,\log F).
  \end{equation}
\end{lemma}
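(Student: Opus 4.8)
The plan is to derive the two-particle case from the entropy contraction estimate of Proposition~\ref{prop_entropy_contraction} applied with $p=1$, combined with the elementary fact that any one-particle measure satisfies the modified log-Sobolev inequality with constant~$1$. Write $\{a,i\}$ for the configuration in $\Omega_{N,2}$ whose two particles sit at $a$ and $i$. Since $\pi$ has log-concave generating polynomial, Proposition~\ref{prop_entropy_contraction} with $k=2$, $p=1$ gives
\begin{equation}
  \ent_\pi(F) \leq 2\,\E_{\nu_1}\big[\ent_{\mu_1^\varphi}(F)\big],
\end{equation}
where $\nu_1$ is supported on one-particle configurations with $\nu_1(\{a\}) = \tfrac12\,\pi(\sigma_a=1)$ and $\mu_1^{\{a\}} = \pi(\,\cdot\mid\sigma_a=1)$ is $\pi$ conditioned on a particle being at $a$. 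For fixed $a$ the measure $\mu_1^{\{a\}}$ is supported on the configurations $\{a,i\}$, so after identifying $F$ with $i\mapsto F(\{a,i\})$ it is a one-particle measure with weights $\mu_1^{\{a\}}(i)=\pi(\{a,i\})/\pi(\sigma_a=1)$.

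The second step is the one-particle modified log-Sobolev inequality: for any probability vector $(\mu(i))_i$ and $F>0$, expanding the product shows
\begin{equation}
  \tfrac12\sum_{i,j}\mu(i)\mu(j)\big(F(i)-F(j)\big)\big(\log F(i)-\log F(j)\big)
  = \E_\mu[F\log F]-\E_\mu[F]\,\E_\mu[\log F],
\end{equation}
and this is at least $\ent_\mu(F)=\E_\mu[F\log F]-\E_\mu[F]\log\E_\mu[F]$ because $\E_\mu[\log F]\leq\log\E_\mu[F]$ by Jensen's inequality. Taking $\mu=\mu_1^{\{a\}}$, multiplying by $\nu_1(\{a\})=\tfrac12\pi(\sigma_a=1)$ and summing over $a$,
\begin{equation}
  \E_{\nu_1}\big[\ent_{\mu_1^\varphi}(F)\big]
  \leq
  \tfrac14\sum_{a}\frac{1}{\pi(\sigma_a=1)}\sum_{i,j}\pi(\{a,i\})\,\pi(\{a,j\})\big(F(\{a,i\})-F(\{a,j\})\big)\big(\log F(\{a,i\})-\log F(\{a,j\})\big).
\end{equation}

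It then remains to recognise this right-hand side as $\tfrac12\,D^{\mathrm{du}}_\pi(F,\log F)$. Any ordered pair of two-particle configurations with nonzero down-up rate is of the form $\sigma=\{a,i\}$, $\sigma'=\{a,j\}$ with $i\neq j$ and a unique common site $a$, and the corresponding jump rate is $c^{\mathrm{du}}_\pi(\{a,i\},\{a,j\})=\pi(\{a,j\})/\pi(\sigma_a=1)$, since the denominator in~\eqref{eq_jump_rates_CGM} then sums $\pi$ over all configurations containing $a$. Parametrising the double sum defining $D^{\mathrm{du}}_\pi(F,\log F)$ by the triple $(a;i,j)$ with $i\neq j$ therefore reproduces the sum above exactly (the $i=j$ terms vanish), and combining with the factor $2$ from Proposition~\ref{prop_entropy_contraction} gives $\ent_\pi(F)\leq D^{\mathrm{du}}_\pi(F,\log F)$. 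The case where $F$ vanishes somewhere follows by replacing $F$ with $F+\epsilon$ and letting $\epsilon\downarrow 0$.

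The only analytic input is the single use of Jensen's inequality; everything else is bookkeeping. The step that needs the most care is the last identity, where one must check that the down-up transitions of $\pi$ on two-particle configurations are precisely the resampling-of-the-free-particle transitions of the conditional one-particle measures $\mu_1^{\{a\}}$, and that the weights $\nu_1(\{a\})$ and $\mu_1^{\{a\}}(i)$ recombine into $\pi(\{a,i\})$ times the down-up jump rate with the correct normalisation.
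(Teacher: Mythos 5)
Your proof is correct, and it reaches the sharp constant by a route that differs from the paper's. The paper proves the $k=2$ case by factorising the down-up transition matrix as $\tilde P = D_{2\to 1}U_{1\to 2}$, deducing the one-step entropy contraction $\ent_\pi(\tilde P F)\leq \tfrac12\ent_\pi(F)$ from entropic independence plus Jensen's inequality for the up-step, and then converting entropy contraction into the modified log-Sobolev inequality via the standard equivalence (the citation to \cite[Lemma~16]{2106.04105}). You instead invoke Proposition~\ref{prop_entropy_contraction} with $k=2$, $p=1$ (which is legitimate and not circular, since that proposition is established independently of the two-particle lemma), use the elementary full-resampling modified log-Sobolev inequality for the conditional one-particle measures $\mu_1^{\{a\}}$ (a one-line Jensen argument), and close the argument by the explicit identification $c^{\mathrm{du}}_\pi(\{a,i\},\{a,j\})=\pi(\{a,j\})/\pi(\sigma_a=1)=\mu_1^{\{a\}}(j)$, which is exactly right for $k=2$ because the denominator in \eqref{eq_jump_rates_CGM} sums over all supported configurations containing the pinned site $a$ (terms outside the support contribute zero), and because the common site $a$ of a pair $(\sigma,\sigma^{ij})$ of two-particle configurations is unique, so your $(a;i,j)$ parametrisation of $D^{\mathrm{du}}_\pi$ is a bijection; your bookkeeping of the factors $\nu_1(\{a\})=\tfrac12\pi(\sigma_a=1)$ and the $\tfrac12$ in \eqref{e:Dirichlet- general} is accurate and reproduces the constant $1$. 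Both proofs consume the same log-concavity input (entropic independence), but yours avoids the kernel factorisation and the contraction-to-MLSI equivalence lemma, making it more self-contained, and it is in effect an instance of the reduction to one-particle dynamics described in the paper's final subsection on other dynamics; the paper's kernel argument, on the other hand, stays closer to the presentation in the cited references on the bases-exchange walk.
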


\begin{proof}
For any $F:\Omega_{N,k}\to\R_+$, writing $\pi^F$ for the probability measure $F\pi/\E_{\pi}[F]$, 
{\red we note from the reversibility of $\tilde P$ with respect to $\pi$:}
\begin{align}
\pi (\sigma) \tilde PF (\sigma)
= \E_\pi( \tilde PF \, 1_\sigma)
= \E_{\pi}[F]\, \E_{\pi^F}( \tilde P 1_\sigma)
= \E_{\pi}[F]\, \pi^F \tilde P (\sigma).
\end{align}
This implies
\begin{align}
\label{eq: entropy decomposition tilde P}
\ent_\pi(\tilde PF)
=
\E_{\pi}[F]\, \mathbb H(\pi^F\tilde P|\pi)
&\leq 
\E_{\pi}[F]\, \mathbb H(\pi^FD_{k\to k -1} |\pi D_{k\to k -1} ) , 
\end{align}
where we used Jensen inequality and the fact that $U_{k-1\to k}$ is a stochastic matrix.

For $k=2$, combining \eqref{eq: entropy decomposition tilde P}
and the contraction estimate~\eqref{eq_entropic_independence_appendix} implies 
for any $F:\Omega_{N,2}\to\R_+$
\begin{equation}
\ent_\pi(\tilde PF)
\leq 
\E_{\pi}[F] \mathbb H(\pi^FD_{2\to 1}|\pi D_{2\to 1}) 
\leq 
\frac{1}{2}\E_{\pi}[F]\mathbb H(\pi^F|\pi)
=
\frac{1}{2}\ent_\pi(F)
.
\label{eq_mLSI_2particles}
\end{equation}
{\red By standard arguments, see, e.g.,~\cite[Lemma~16]{2106.04105} or \cite{MR2283379},
this  contraction of the entropy is equivalent to the modified log-Sobolev inequality for the discrete Markov chain associated with $\tilde P$: 
\begin{equation}
\ent_{\pi}(F)
\leq 
2 \E_{\pi}[ (F  - \tilde P F)  \; \log F]
= 
D^{\mathrm{du}}_{\pi}(F,\log F),
\end{equation}
using that $ \tilde P=\frac{1}{2} {\red c_\pi^{\mathrm{du}}}$
for $k=2$ in order to recover $D^{\mathrm{du}}_{\pi}$.
This completes the proof of Lemma~\ref{lem:app-two-particle}.}
\end{proof}

\begin{proof}[Proof of Theorem~\ref{thm:hs-lsi}]
To clarify the exposition, we restrict to configurations with $k\geq 2$ particles in the following proof.
The case $k=1$ is also accessible, see~\cite{MR4203344}.  

Applying the modified log-Sobolev inequality for the two-particle measure
from Lemma~\ref{lem:app-two-particle} to $\mu_2$, we get:
\begin{equation}
\E_{\nu_{2}}[\ent_{\mu_{2}}(F)]
\leq
  \E_{\nu_{2}}[D^{\mathrm{du}}_{\mu_{2}}(F,\log F)].
  \label{eq_mLSI_2particles_applied}
\end{equation}
Using the observation~\eqref{eq_conditioned_jp_rates} that the bases-exchange jump rates for $\mu_2$ are the same as for $\pi$,
the right-hand side of~\eqref{eq_mLSI_2particles_applied} is:
\begin{equation}
  \E_{\nu_{2}}[D^{\mathrm{du}}_{\mu_{2}}(F,\log F)]
  =
  {\red \frac{1}{2}}\E_{\nu_{2}}\Big[\sum_{\sigma\in\Omega_{N,k}}\pi(\sigma|\sigma\geq \varphi)
  \sum_{i\in I(\sigma)\setminus I(\varphi)} \sum_{j\in[N]}  {\red c_\pi^{\mathrm{du}}}(\sigma,\sigma^{ij})\nabla_{ij}F(\sigma)\nabla_{ij}\log F(\sigma)\Big].
  \label{eq_DF_2_particles}
\end{equation}
Note that the sum over $j$ is written over the whole of $[N]$
as exchanges between two sites $i,j$ occupied by particles do no contribute due to $\nabla_{ij}F(\sigma) = 0$ 
and the jump rates for configurations $\sigma^{ij}\notin\text{supp}(\sigma)$ vanishes.

Under the measure $\nu_{2}\otimes \mu_{ 2}$, the variable $(\varphi,\sigma)\in{\red\Omega^{\pi}_{k- 2}}\times\Omega_{N,k}$ is distributed such that $\sigma\sim \pi$ and
$I(\varphi)= \{i: \varphi_i=1\}$ is uniform on subsets of $I(\sigma) = \{i: \sigma_i = 1\}$ of size $2$:
\begin{equation}
  \E_{\nu_{2}}\E_{\mu_{2}}[F]
  =
    \sum_{\varphi\in{\red \Omega^\pi_{k-2}}} \frac{2}{k(k-1)} \sum_{\sigma\in \Omega_{N,k}} \pi(\sigma) 1_{\sigma \geq \varphi} F(\sigma).
\label{eq: decomposition explicite}
\end{equation}
{\red Together with~\eqref{eq_DF_2_particles}, this yields:} 
\begin{align}
 &\E_{\nu_{2}}[D^{\mathrm{du}}_{\mu_{2}}(F,\log F)]  
 \nnb
 &\qquad=
{\red\frac{1}{2}}
\frac{2}{k(k-1)}\sum_{\sigma\in\Omega_{N,k}}\pi(\sigma) \sum_{i,j\in[N]}{\red c_\pi^{\mathrm{du}}  (\sigma,\sigma^{ij})} \nabla_{ij}F(\sigma)\nabla_{ij}\log F(\sigma)\sum_{\varphi\in{\red \Omega^\pi_{ k- 2}}}{\bf 1}_{\sigma\geq \varphi}{\bf 1}_{\sigma_i=1=-\varphi_i}
\nnb
&\qquad=
\frac{2}{k} D^{\mathrm{du}}_\pi(F,\log F)
,
\end{align}
where we used that, {\red for each $\sigma\in\Omega_{N,k}$}, 
there are exactly $k-1$ configurations $\varphi\in {\red \Omega^\pi_{k-2}}$ with $\sigma\geq\varphi$ and $\sigma_i=1=-\varphi_i$. 
Combining with~\eqref{eq_accumulated_error_appendix} proves the claim. 
\end{proof}

\subsection{Other dynamics}
The entropy contraction argument of Proposition~\ref{prop_entropy_contraction} is not related to the specific choice of dynamics. 
In particular, to prove a modified log-Sobolev inequality for other dynamics, it is enough to prove it  with one (or a few) particles only as we now explain.

Let $1\leq k\leq \lfloor N/2\rfloor$ and consider any family of jump rates $(q(\sigma,\sigma'))_{ij}$ on $\Omega_{N,k}$ such that $\pi$ is the stationary measure of the associated dynamics (which does not even have to be reversible). 
Proposition~\ref{prop_entropy_contraction} applies unchanged:
for any $1 \leq p\leq k$,
\begin{equation}
\ent_{\pi}(F)
=
\E_{\nu_k}\big[\ent_{\mu_k}(F)\big]
\leq 
\frac{k}{p}\E_{\nu_{p}}[\ent_{\mu_{p}}(F)]
.
\end{equation}
Take for instance $p=1$. 
In that case {\red $\mu_1 = \pi(\cdot|\sigma\geq \varphi)$ ($\varphi\in\Omega^\pi_{k-1}$)} can be identified as a measure on a single particle jumping on the {\red at most $N-k+1$ sites $i$ such that $\varphi+{\bf 1}_i\in\text{supp}(\pi)$.} 
Write $q^\varphi(i,j) = {\bf 1}_{{\red \sigma= \varphi+{\bf 1}_i}}q(\sigma,\sigma^{ij})$ for the associated jump rates.  

Let $\gamma(q,\varphi)$ be the corresponding log-Sobolev constant (the argument is identical with the modified log-Sobolev constant):
\begin{equation}
\ent_{\mu_1}(G) 
\leq 
\frac{2}{\gamma(q,\varphi)}\sum_{i\in[N]}\mu_1(i)\Big[\frac{1}{2}\sum_{j\in[N]}q^\varphi(i,j) \big[\sqrt{G(i)}-\sqrt{G(j)}\big]^2\Big]
,\qquad
G:[N]\to\R_+
.
\end{equation}
Then:
\begin{align}
\ent_{\pi}(F)
&\leq 
k\E_{\nu_{1}}\Big[\frac{2}{\gamma(q,\varphi)}{\red \frac{1}{2}}\sum_{i,j\in[N]} \mu_1(i) q^\varphi(i,j) \big[\sqrt{F}(\varphi+{\bf 1}_j) - \sqrt{F}(\varphi+{\bf 1}_{i})\big]^2\Big]
\nnb
&= 
\sum_{\sigma\in\Omega_{N,k}}\pi(\sigma) \sum_{i,j\in[N]}q(\sigma,\sigma^{ij}) [\nabla_{ij}\sqrt{F}(\sigma)]^2 {\red \Big(\sum_{\varphi\in\Omega^\pi_{k-1}}\frac{1}{\gamma(q,\varphi)}{\bf 1}_{\varphi=\sigma-{\bf 1}_i}\Big)
}
\nnb
&=
\sum_{\sigma\in\Omega_{N,k}}\pi(\sigma) \sum_{i,j\in[N]}\frac{1}{\gamma(q,\sigma-{\bf 1}_i)} q(\sigma,\sigma^{ij}) [\nabla_{ij}\sqrt{F}(\sigma)]^2
.
\end{align}
Proving the log-Sobolev inequality for the $k$-particle dynamics is therefore reduced to a bound on the $1$-particle log-Sobolev constant uniformly on subsets of $[N]$ with size $N-(k-1)$.

\section*{Acknowledgements}

We thank the referees and Jakob Kellermann for helpful suggestions that improved the presentation.

This work was supported by the European Research Council under the European Union's Horizon 2020 research and innovation programme
(grant agreement No.~851682 SPINRG). 

\bibliography{all}
\bibliographystyle{plain}

\end{document}